\newcolumntype{L}{>{$}c<{$}} 
\defcite\ful{fulton_intersection}
\defcite\fulb{fulton_young}
\def\coha{CoHA\xspace}
\def\cohas{CoHAs\xspace}
\def\tp#1#2{#1^{(#2)}} 
\def\bw{\f}
\def\Hilb{\cN}
\def\f{\mathbf f}
\begin{document}
\title{Tautological bases of CoHA modules}

\author{Hans Franzen}
\address{Faculty of Mathematics, Ruhr-University Bochum, 44780 Bochum, Germany}
\email{hans.franzen@rub.de}

\author{Sergey Mozgovoy} 
\address{School of Mathematics, Trinity College Dublin, Dublin 2, Ireland
\newline\indent
Hamilton Mathematics Institute, Dublin 2, Ireland}
\email{mozgovoy@maths.tcd.ie}

\begin{abstract}
Given a quiver, we consider its cohomological Hall algebra (CoHA) as well as CoHA modules built of cohomology groups of non-commutative Hilbert schemes.
We investigate cell decompositions of non-commutative Hilbert schemes and the corresponding (non-canonical) bases of CoHA modules.
We construct canonical bases of CoHA modules, which consist of products of Chern classes of tautological vector bundles over non-commutative Hilbert schemes.
This result generalizes classical results for Grassmannians and (partial) flag varieties.
\end{abstract}

\maketitle

\section{Introduction}
Cohomological Hall algebras (abbreviated as \cohas) were introduced in \cite{kontsevich_cohomological} as a mathematical incarnation of the algebras of BPS states proposed in string theory.
The definition of \cohas goes through the same lines as the definition of conventional Hall algebras \cite{ringel_hall}.
The benefit of the former is that cohomology groups used in the definition of \cohas are smaller and easier to describe explicitly than the spaces of functions on the groupoids of objects used in the definition of conventional Hall algebras.
\medskip

In particular, for the category of representations of a quiver $Q$,
the corresponding \coha \cH was described explicitly in \cite{kontsevich_cohomological} as a shuffle algebra.
More precisely, let $Q$ have the set of vertices $I$.
We can represent $\cH$ as a graded vector space
$$\cH=\bop_{\bd\in\bN^{I}}\cH_\bd,\qquad 
\cH_\bd=\bts_{i\in I}\La_{\bd_i}\iso
\bQ[x_{i,k}\col i\in I,1\le k\le\bd_i]^{\Si_\bd},$$
where $\La_{\bd_i}$ denotes the ring of symmetric polynomials in $\bd_i$ variables and $\Si_\bd=\prod_{i\in I}\Si_{\bd_i}$ is the product of symmetric groups.
The shuffle product on \cH is encoded by the Euler form of $Q$.
\medskip

The next natural question is a study of modules over the \coha \cH \cite{soibelman_remarks,franzen_chow,franzen_cohomology,franzen_semia,Young:20}.
A structure of an \cH-module can be introduced
on the cohomology of the moduli spaces of stable framed representations of $Q$, similar to the action of quantum affine algebras on (equivariant) cohomology of Nakajima quiver varieties \cite{nakajima_heisenberg,nakajima_quiverb}. 
More precisely, given vectors $\bw\in\bN^{I}$ 
and $\bd\in\bN^{I}$ (called \idef{framing} and \idef{dimension} vectors respectively),
we consider the moduli space $\Hilb_{\bd,\bw}$ 
that parametrizes pairs $(M,s)$, where $M$ is a $Q$-representation having dimension vector $\bd$ and $s\in\bigoplus_{i\in I}\Hom(\bC^{\bw_i},M_i)$ is such that its image generates $M$ as a $Q$-representation. 
This moduli space is called a \idef{non-commutative Hilbert scheme}.
For example, for the quiver having one vertex and no loops and natural numbers $d\le w$,
the moduli space $\Hilb_{d,w}$ can be identified with the Grassmannian $\Gr(w-d,w)$.
Similarly, for the quiver $1\to 2\to\dots\to n$
and vectors $\bw=(w,0,\dots,0)$, $\bd=(\bd_1,\dots,\bd_n)$ with $w\ge\bd_1\ge\dots\ge\bd_n\ge0$,
the moduli space $\Hilb_{\bd,\bw}$ can be identified with the space of (partial) flags.
\medskip 

The graded vector space 
$$\cM_\bw
=\bop_{\bd\in\bN^{I}}H^*(\Hilb_{\bd,\bw})$$ 
can be naturally equipped with the structure of a module over the \coha $\cH$
\cite{soibelman_remarks,franzen_chow}.
In \cite{franzen_semia,franzen_chow} one constructed an explicit epimorphism $\cH\onto\cM_\bw$
between \cH-modules and determined its kernel in terms of the shuffle product on $\cH$.
This gives, in principle, a description of cohomologies $H^*(\Hilb_{\bd,\bw})$ of non-commutative Hilbert schemes.
\medskip

The epimorphism $\cH\onto\cM_\bw$
is given for the degree ~$\bd$ components by
$$\cH_\bd\to H^*(\cN_{\bd,\bw}),\qquad
\prod_{\ov{i\in I}{1\le k\le\bd_i}}e_{i,k}^{n_{i,k}}\mto
\prod_{\ov{i\in I}{1\le k\le\bd_i}}c_k(\cU_i)^{n_{i,k}},$$
where $e_{i,k}\in\La_{\bd_i}$ is the $k$-th elementary symmetric polynomial and $c_k(\cU_i)$ is the $k$-th Chern class of the tautological vector bundle $\cU_i$ over $\Hilb_{\bd,\bw}$ at the vertex $i\in I$ .
This implies that for appropriate choices of the powers $(n_{i,k})$ we can obtain a basis of $H^*(\Hilb_{\bd,\bw})$.
\medskip

On the other hand, it was proved in  \cite{reineke_cohomology,engel_smooth}
that non-commutative Hilbert schemes $\cN_{\bd,\f}$ possess a cell decomposition, parametrized by subtrees of the tree of paths $\cP^\f$ in the framed quiver $Q^\f$.
While the set of parameters (subtrees of the tree of paths) is independent of any choices, the corresponding cells (and the classes of their closures) generally depend on a (non-canonical) choice of a total order on the tree of paths.
Taking duals of the cycles of cell closures, we obtain a (non-canonical) basis of $H^*(\Hilb_{\bd,\bw})$.
\medskip 

In this paper we relate the above two approaches to cohomology of non-commutative Hilbert schemes and,
as a result, 
we describe a canonical basis of $H^*(\Hilb_{\bd,\bw})$ in terms of the Chern classes of tautological vector bundles over $\Hilb_{\bd,\bw}$.
There exists a bijection between the set of subtrees of $\cP^\f$ and the set of certain multi-partitions \cite[\S8]{engel_smooth},
which is obtained by applying a modification of a depth-first search algorithm for plane trees.
This bijection 
plays a fundamental role in our approach, as we parametrize products of Chern classes of tautological bundles by multi-partitions and relate them to the classes of cell closures parametrized by subtrees of $\cP^\f$.
Our main result is (see Theorem \ref{th:basis2})


\begin{theorem}
Let $\cS^\f(\bd)$ denote the set of multi-partitions 
\begin{equation*}
\la=(\tp\la i)_{i\in I},\qquad
\tp\la i
=(\tp\la i_1\ge\dots\ge\tp\la i_{\bd_i}\ge0),
\qquad i\in I,
\end{equation*}
satisfying condition \eqref{Phi}.
Then the classes
$$\prod_{i\in I}\prod_{k\ge1} c_k(\cU_i)^{\tp\la i_k-\tp\la i_{k+1}}\in H^{2\n\la}(\cN_{\bd,\f}),\qquad
\la\in\cS^\f(\bd),$$
form a basis of $H^*(\cN_{\bd,\f})$,
where $\n\la=\sum_{i\in I}\n{\tp\la i}$.
\end{theorem}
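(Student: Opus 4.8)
Write $m_T:=\prod_{i\in I}\prod_{k\ge1}c_k(\cU_i)^{\la^{(i)}_k-\la^{(i)}_{k+1}}$ for the tautological class attached to a subtree $T$ of $\cP^\f$ via $\la=\la(T)\in\cS^\f(\bd)$. The plan is to compare the $m_T$ with the basis of $H^*(\cN_{\bd,\f})$ coming from the cell decomposition of \cite{reineke_cohomology,engel_smooth}, and to show that the transition matrix between the two is triangular with invertible diagonal in each cohomological degree; linear independence then follows, and a dimension count shows that nothing more is needed. For the bookkeeping: fixing an admissible total order on $\cP^\f$ yields an affine paving $\cN_{\bd,\f}=\bigsqcup_T C_T$ indexed by the subtrees $T\sbs\cP^\f$ of type $\bd$, and hence a basis of $H^*(\cN_{\bd,\f})$ given by the Poincaré duals $\xi_T$ of the cell closures $\ub{C_T}$. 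The Engel bijection $T\mapsto\la(T)$ of \cite[\S8]{engel_smooth} identifies the index set with $\cS^\f(\bd)$ and is grading compatible, so that $\xi_T\in H^{2\n{\la(T)}}(\cN_{\bd,\f})$; thus the $m_T$ already have the correct cardinality in each cohomological degree, and it remains only to prove that they are linearly independent.

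For the independence I would pass to $\bT$-equivariant cohomology, where $\bT$ is the maximal torus of $\prod_{i\in I}\GL_{\f_i}$ --- enlarged if needed by a one-dimensional torus scaling the arrows of $Q$ --- chosen so that the cells $C_T$ are the Bia\l ynicki--Birula cells for a generic cocharacter, the fixed points $p_T\in C_T$ are the monomial (tree-shaped) representations, and the tautological bundles $\cU_i$ are $\bT$-equivariant. Restricting to a fixed point $p_{T'}$, the Chern roots of $\cU_i$ specialize to the $\bT$-weights of the monomial basis vectors of $M_i$ indexed by the colour-$i$ nodes of $T'$, so $m_T|_{p_{T'}}$ is an explicit monomial in those weights. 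Combining the incidence rule for the paving --- $p_{T'}$ lies in $\ub{C_T}$, resp.\ in the closure $\ub{C_T^+}$ of the opposite cell at $p_T$, only if $T'\succeq T$, resp.\ $T'\preceq T$, in the order induced by the admissible order --- with the dual-basis relation $\langle[\ub{C_{T'}^+}],\xi_T\rangle=\delta_{T,T'}$, one reduces to showing that the numbers $\langle m_T,[\ub{C_{T'}^+}]\rangle$, evaluated by localization from the fixed-point restrictions above, form a triangular matrix in $(T,T')$ with nonzero diagonal.

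Establishing this triangularity is the main obstacle. Its two combinatorial inputs are: (i) the incidence order on subtrees coincides with the order used by the modified depth-first search on plane trees underlying the Engel bijection; and (ii) the exponents $\la^{(i)}_k-\la^{(i)}_{k+1}$ with $\la=\la(T)$ are precisely the increments recorded along that search, so that $m_T$ ``reads off'' exactly the node positions of $T$ and is subordinate to $T$ in the Engel order. Making (i) and (ii) precise --- matching the depth-first-search combinatorics with the fixed-point restrictions and pinning down the exact order for which the matrix is triangular --- is the heart of the argument. As a sanity check, for the one-vertex loop-free quiver, where $\cN_{d,w}=\Gr(w-d,w)$ and $m_T=e_{\la'}$ in the Chern roots, this degenerates to the classical unitriangular expansion $e_{\la'}=s_\la+\sum_{\nu\lhd\la}K_{\nu'\la'}s_\nu$ in the dominance order, which exhibits these monomials as a basis triangular against the Schubert basis; the general statement is its quiver analogue, with the Engel order replacing dominance. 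Should the triangularity prove awkward to set up directly, two fallbacks are available: a descending induction on the order, expressing each $\xi_T$ through $m_T$ and lower-order classes via the same incidence estimates; or a purely algebraic route, combining the epimorphism $\cH_\bd\onto H^*(\cN_{\bd,\f})$ with the description of its kernel from \cite{franzen_chow,franzen_semia} to show that the images of the monomials $\prod_k e_k^{\la_k-\la_{k+1}}$, $\la\in\cS^\f(\bd)$, span $H^*(\cN_{\bd,\f})$.
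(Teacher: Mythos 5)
Your high-level plan --- compare the tautological monomials $m_T$ against the cell-closure basis $[\overline{Z}_T]$ and prove the transition matrix is unitriangular in each degree --- is exactly the skeleton of the paper's proof, which establishes $\bD'_S=\sum_{S'\ge S}n_{S',S}[\overline{Z}_{S'}]$ with $n_{S,S}=1$ (see \eqref{D-Z} and Theorem~\ref{basis1}). But the mechanism you propose for obtaining the triangularity is not the paper's and, as stated, has gaps that the paper's argument is specifically designed to avoid.

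The paper realises each tautological monomial as a degeneracy class. For $v\in C(S)$ the sections $(m_u)_{u\in S(v)}$ give a map $\sigma_v:\OO^{k_v+1}\to\cU_{t(v)}$ whose Fulton degeneracy class $\bD(\sigma_v)$ pushes forward to $c_{\bd_{t(v)}-k_v}(\cU_{t(v)})\cap[\cN_\bd]$; the refined product $\bD_S=\prod_v\bD(\sigma_v)$ is supported on $D_S=\bigcap_v D(\sigma_v)$, and the containment $D_S\subset\bigsqcup_{S'\ge S}Z_{S'}$ from Lemma~\ref{D_S inclusion} delivers the triangularity for free, with no further combinatorics. The unit diagonal is Lemma~\ref{intersection lemma}: over the affine chart $U_S$ each $D(\sigma_v)\cap U_S$ is a coordinate subspace of the expected codimension, so the refined product collapses to $[Z_S]$ with multiplicity one. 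Your torus-localization route runs into three obstacles. First, the cells $Z_S$ depend on an admissible order on $\cP^\f$ that need not be realised by any cocharacter (the lex order used in \cite{reineke_cohomology,engel_smooth} is admissible but not even monomial), so they need not be Bia{\l}ynicki--Birula strata for any torus. Second, $\cN_{\bd,\f}$ is typically non-proper (already $\cN_{d,1}\cong\bC^d$ for the one-loop quiver), so there is no opposite BB decomposition of all of $\cN_\bd$ and no Poincar\'e pairing making $[\overline{C^+_{T'}}]$ dual to the cell closures; your Grassmannian sanity check is silent on this precisely because $\Gr$ is projective. Third, and most decisively, $m_T$ is a product of Chern classes of globally defined bundles, so its fixed-point restrictions do not vanish off-diagonal in any evident way; the vanishing that makes Schubert-type localization triangular comes from the class being \emph{supported} on a cell closure, and it is exactly the degeneracy-class realisation of $m_T$ --- which your plan does not use --- that supplies the analogous support property here. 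You explicitly flag the triangularity (your items (i) and (ii)) as ``the heart of the argument'' and leave it unresolved; the paper's degeneracy-locus argument is the piece that fills that hole.
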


\section{Cell decomposition of non-commutative Hilbert schemes}

\subsection{Non-commutative Hilbert schemes}
Let $Q$ be a quiver with the set of vertices $I=Q_0$ and the set of arrows $Q_1$.
The bilinear form $\hi=\chi_Q: \Z^I \times \Z^I \to \Z$ defined by
\begin{equation}
\chi(\bd,\be) = \sum_{i \in I} 
\bd_i\be_i - \sum_{(a: i \to j) \in Q_1} \bd_i\be_j
\end{equation}
is called the \idef{Euler form} of $Q$.
Given a vector $\bw\in\bN^{I}$, called a \idef{framing vector},
we define a new (framed) quiver $Q^\bw$ by adding a new vertex $\infty$ and $\bw_i$ arrows $\infty\to i$, for all $i\in I$.
We define a \idef{framed representation} ~$M$ to be a representation of $Q^\bw$ such that $\dim M_\infty=1$.
We call it \idef{stable} 
if $M$ is generated, as a $Q^\bw$-representation, by the subspace $M_\infty$.

For any dimension vector $\bd\in\bN^{I}$,
we define
\begin{align}
R(Q,\bd) &=\bop_{(a:i\to j)\in Q_1}\Hom(\bC^{\bd_i},\bC^{\bd_j}), \\
R^\f(Q,\bd) &= R(Q,\bd) \oplus \bop_{i\in I}\Hom(\bC^{\bw_i},\bC^{\bd_i}).
\end{align}
An element $M\in R^\f(Q,\bd)$ can be interpreted as a framed representation.
Let $R^{\f,\st}(Q,\bd)\sbs R^\f(Q,\bd)$ be the open subspace of stable framed representations.
The space $R^\f(Q,\bd)$ is equipped with the action of the group $G_\bd=\prod_{i\in I}\GL_{\bd_i}(\bC)$.
The induced action of $G_\bd$ on
$R^{\f,\st}(Q,\bd)$ is free and there exists a smooth algebraic variety
\begin{equation}
\cN_\bd
=\cN_{\bd,\bw}
=R^{\f,\st}(Q,\bd)/G_\bd
\end{equation}
parameterizing stable framed representations up to isomorphism,
called the \idef{non-commutative Hilbert scheme}.
If non-empty, it has dimension 
\begin{equation}\label{dim Nd}
\dim\cN_\bd=\bw\cdot\bd-\hi(\bd,\bd).
\end{equation}

Every representation $M\in \cN_\bd$ is equipped with a nonzero element $m_*\in M_\infty$ that generates ~$M$ as a representation of $Q^\bw$.
%
%
Let $A=\bC Q^\bw$ be the path algebra of the quiver $Q^\bw$ and let $P=Ae_\infty$ be the projective $A$-module
corresponding to the idempotent
$e_\infty\in A$ (the trivial path at~ $\infty$).
For any $M\in\cN_\bd$, we have an epimorphism
$P\to M$, $u\mto um_*$.
The moduli space $\cN_\bd$ parametrizes such epimorphisms up to an automorphism of $M$.

The projective module $P=Ae_\infty$ has a basis consisting of paths in $Q^\bw$ that start at $\infty$.
This set of paths has a tree structure which will be important in our analysis of stratifications of the moduli space $\cN_\bd$.
Therefore we will discuss these combinatorial structures in more detail in the next sections.

\subsection{Posets and trees}
Let $(\cP,\preceq)$ be a partially ordered set (poset).
We will write $u\prec v$ if $u\preceq v$ and $u\ne v$, for $u,v\in\cP$.
For any $v\in \cP$, we define
\begin{equation}
\cP_{\preceq v}=\sets{u\in \cP}{u\preceq v}\qquad
\cP_{\prec v}=\sets{u\in \cP}{u\prec v}.
\end{equation}
The elements of $\cP_{\prec v}$ are called \idef{predecessors} of $v$.
A subset $S\sbs \cP$ is called a \idef{lower set} if $\cP_{\preceq v}\sbs S$, for all $v\in S$.

We define a \idef{(rooted) tree} to be a poset $(\cP,\preceq)$ such that
\begin{enumerate}
\item There is a unique minimal element $*\in \cP$, called \idef{the root}.
\item For any $v\in \cP\ms\set*$, 
the poset $\cP_{\prec v}$ is a finite chain (totally ordered set).
Its unique maximal element is denoted by $p(v)$, called the \idef{parent} of $v$.
\end{enumerate}

We define a \idef{subtree} of a tree $\cP$ to be a non-empty lower set $S\sbs\cP$.
In what follows we will consider only finite subtrees, unless otherwise stated.
For any subtree $S\sbs\cP$,
we define its \idef{critical set} to be
\begin{equation}
C(S)
=\min\nolimits_{\preceq}(\cP\ms S)
=\sets{v\in\cP\ms S}{u\prec v\imp u\in S}.
\end{equation}
Note that if $v\in C(S)$, then $S\cup\set v$ is again a tree.

\subsection{Path poset}
For any path $u$ in $Q^\bw$, we denote by $s(u)$ and $t(u)$ its source and target vertices, respectively.
Let $\cP=\cP^\bw$ be the set of paths in $Q^\bw$ that start at $\infty$.
It is a basis of the module $P=Ae_\infty$ considered earlier.
We define the partial order on ~$\cP$ given by
\begin{equation}
u\preceq v\quad\text{ if }\quad v=wu
\end{equation}
for some path $w$ with $s(w)=t(u)$.
The poset \cP is a tree with the root $*=e_\infty$.
For any non-trivial path $u=a_n\dots a_1\in\cP$ (where $a_i$ are arrows with $s(a_i)=t(a_{i-1})$, for $1<i\le n$), the parent of $u$ is $a_{n-1}\dots a_1\in\cP$.

For any (finite) subset $S\sbs\cP$, we define its dimension vector (note that we omit the vertex $\infty$ here) 
\begin{equation}
\udim S=(\# S_i)_{i\in I},\qquad
S_i=\sets{u\in S}{t(u)=i}.
\end{equation}
For any $\bd\in\bN^I$, let $\cP(\bd)$ denote the set of trees $S\sbs \cP$ with $\udim S=\bd$.
This set is finite.
Indeed, for any $S\in\cP(\bd)$ and $v\in S$, the length of the path $v$ is $\le \# S-1=\n\bd=\sum_{i\in I}\bd_i$ and the number of paths in $\cP$ having a given length is finite.

\subsection{Properties of stable framed representations}
Recall that every $M\in\cN_\bd$ is equipped with a vector $m_*\in M_\infty$ that generates $M$ as a $Q^\bw$-representation.
For any path $u\in\cP$, we define 
\begin{equation}
m_u=um_*\in M_{t(u)}.
\end{equation}
These vectors generate $M$ as a vector space.
For any subset $S\sbs \cP$, we define the subspace
\begin{equation}
M_S=\angs{m_u}{u\in S}\sbs M.
\end{equation}
Note that if $(m_u)_{u\in S}$ is a basis of $M$, then
$\udim S=\bd$.

\begin{lemma}\label{tree generator}
Let $S\sbs\cP$ be a tree such that $m_v\in M_S$ for all $v\in C(S)$. Then $M_S=M$.
\end{lemma}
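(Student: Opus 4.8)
The plan is to show that $M_S$ is a $Q^\bw$-subrepresentation of $M$ that contains the generator $m_*$; since $M$ is generated by $m_*$, this forces $M_S = M$. So the real content is to prove that $M_S$ is closed under the action of every arrow of $Q^\bw$, i.e. that for every arrow $a$ in $Q^\bw$ and every generator $m_u$ with $u \in S$, the vector $a \cdot m_u$ lies in $M_S$. Note that $a \cdot m_u = a \cdot u m_* = (au) m_* = m_{au}$, where $au$ is again a path in $\cP$ starting at $\infty$ (provided $a$ can be composed with $u$, which is exactly when $a \cdot m_u$ is potentially nonzero). Thus everything reduces to the combinatorial statement: for each $u \in S$ and each child-path $v = au$ of $u$ in the tree $\cP$, we have $m_v \in M_S$.

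First I would set up the induction. Order the finitely many paths $v \in \cP$ with $p(v) \in S$ by their length, and prove $m_v \in M_S$ by induction on this length (or, equivalently, induct on the poset structure of $\cP$ restricted to the relevant paths). If $v \in S$ itself, then $m_v \in M_S$ trivially. If $v \notin S$ but $p(v) \in S$, then since $S$ is a lower set (a subtree) and $\cP_{\prec v}$ is a chain with maximum $p(v) \in S$, in fact every strict predecessor of $v$ lies in $S$; hence $v \in C(S)$ by definition of the critical set. By hypothesis $m_v \in M_S$, which is exactly what we need. The subtle point is the third case: a path $v$ with $p(v) \notin S$. Here $p(v)$ is a strictly shorter path with $p(p(v))$ still satisfying the relevant hypotheses, so by induction $m_{p(v)} \in M_S$, and then writing $v = a\cdot p(v)$ gives $m_v = a\cdot m_{p(v)}$; but now I need $M_S$ to already be $a$-stable to conclude — which is circular unless I organize the induction correctly.

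The cleanest way to avoid circularity is the following. Let $N$ be the $Q^\bw$-subrepresentation of $M$ generated by $M_S$; equivalently $N = M_{\bar S}$ where $\bar S = \{ v \in \cP : u \preceq v \text{ for some } u \in S\}$ is the "upward closure" of $S$ regarded as a set of paths, i.e. all paths obtained by prepending arrows to elements of $S$. Since $* \in S$ (as $S$ is a nonempty lower set, it contains the root), $m_* \in M_S \subseteq N$, so $N = M$. It therefore suffices to prove $M_{\bar S} = M_S$, and for this I prove $m_v \in M_S$ for all $v \in \bar S$ by induction on the length of $v$. For $v \in S$ this is clear. For $v \in \bar S \setminus S$: then $p(v) \in \bar S$ and $p(v)$ is shorter, so by induction $m_{p(v)} \in M_S$; moreover, $v \notin S$ together with $S$ being a lower set and $\cP_{\prec v}$ a chain forces all predecessors of $v$ into $S$ once we know $p(v) \in \bar S$ — wait, that is not immediate. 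Let me instead induct to show the stronger statement that $v \in \bar S \setminus S \implies v \in C(S)$: indeed all strict predecessors of $v$ form the chain $\cP_{\prec v}$, and I claim they all lie in $S$; if not, let $u$ be the minimal predecessor not in $S$, then $u \in \bar S$ (being a predecessor of $v \in \bar S$... again needs care).

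The robust fix, which I expect to be the actual argument, is to define $S' = S \cup \{ v \in C(S) : m_v \in M_S\} = S \cup C(S)$ under the hypothesis, observe $M_{S'} = M_S$, and iterate: $S \subsetneq S' \subsetneq S'' \subsetneq \cdots$ is a strictly increasing chain of subtrees (each step adjoins the critical set, and $S' \cup \{v\}$ is a tree for $v \in C(S)$ by the remark preceding the lemma) with $M_{S^{(n)}} = M_S$ at every stage — but this only terminates if $M$ is finite-dimensional, which it is. When it stabilizes at some subtree $T$ we have $C(T) = \varnothing$, which for a subtree of $\cP$ means $T = \cP$ (as $\cP$ is connected as a tree: if $\cP \setminus T \neq \varnothing$, pick a minimal-length element, its parent is in $T$, so it lies in $C(T)$). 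Hence $M = M_\cP = M_T = M_S$. The main obstacle is precisely this bookkeeping — ensuring at each stage that the enlarged set is still a tree and that the critical-set hypothesis propagates ($m_v \in M_S = M_{S'}$ for $v \in C(S')$ needs the arrow-action argument $m_v = a m_{p(v)}$ with $m_{p(v)} \in M_S$, which uses $M_S \subseteq M_{S'}$ and that $M_{S'}$ already contains the image of $M_S$ under that arrow, true once $p(v) \in S'$), rather than any single hard step.
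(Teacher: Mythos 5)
Your second paragraph already contains the entire proof, and it is essentially identical to the paper's: for $u\in S$ and a composable arrow $a$, the path $v=au$ has parent $p(v)=u\in S$, hence $\cP_{\prec v}=\cP_{\preceq u}\sbs S$ because $S$ is a lower set, and therefore either $v\in S$ (so $m_v\in M_S$ by definition) or $v\in C(S)$ (so $m_v\in M_S$ by hypothesis). That shows $M_S$ is a subrepresentation, it contains $m_*$ because $*\in S$, and you are done. The ``third case'' you worry about ($p(v)\notin S$) never arises: you reduced the problem to the arrows $v=au$ with $u\in S$, and for those $p(v)=u\in S$ automatically. You talked yourself out of a correct, finished argument.

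Your alternative iteration argument is also more troublesome than you acknowledge. The claim that the chain $S\sbsetneq S'\sbsetneq S''\sbsetneq\cdots$ with $S^{(n+1)}=S^{(n)}\cup C(S^{(n)})$ ``terminates because $M$ is finite-dimensional'' is false: each step adjoins an entire new layer of the infinite tree $\cP$, so the chain of sets never stabilizes. Finite-dimensionality of $M$ bounds the nested spans $M_{S^{(n)}}$, not the sets $S^{(n)}$. One can repair this by observing that $\bigcup_n S^{(n)}=\cP$ (a path of length $\ell$ lies in $S^{(\ell)}$) and that $m_v\in M_S$ for every $v$ in this union, so $M=M_\cP=M_S$; and you are right that propagating the hypothesis $m_v\in M_{S^{(n)}}$ to $C(S^{(n+1)})$ already needs the arrow-closure argument $m_{av}=\sum_u c_u m_{au}$. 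But none of this machinery is necessary: the direct two-case argument of your second paragraph is the proof.
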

\begin{proof}
It is enough to show that $M_S\sbs M$ is a subrepresentation.
Let $u\in S$ and $a$ be an arrow with $s(a)=t(u)$.
Then $au\in S$ or $au\in C(S)$.
If $au\in S$, then $am_u=m_{au}\in M_S$ by the definition of $M_S$.
If $au\in C(S)$, then $am_u=m_{au}\in M_S$ by assumption.
\end{proof}

\begin{corollary}
\label{basis ext}
Let $S\sbs\cP$ be a tree, such that $(m_u)_{u\in S}$ are linearly independent.
Then there exists a tree $S\sbs S'\sbs\cP$ 
such that $(m_u)_{u\in S'}$ is a basis of $M$.
\end{corollary}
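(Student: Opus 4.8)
The plan is to enlarge $S$ greedily, steered by Lemma~\ref{tree generator}: starting from $S$ I would adjoin, one element at a time, an element of the critical set whose associated vector is not yet in the span, halting as soon as the span is all of $M$. Concretely, I would construct a chain of trees $S=S_0\sbs S_1\sbs S_2\sbs\cdots$ with $(m_u)_{u\in S_n}$ linearly independent for every $n$, as follows. Given such an $S_n$: if $M_{S_n}=M$, stop and take $S'=S_n$. Otherwise $M_{S_n}\subsetneq M$, so by the contrapositive of Lemma~\ref{tree generator} there exists $v\in C(S_n)$ with $m_v\notin M_{S_n}$ (in particular $C(S_n)\ne\es$); set $S_{n+1}=S_n\cup\set v$. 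Then $S_{n+1}$ is again a tree --- this is precisely the remark recorded just after the definition of the critical set --- it still contains $S$, and $(m_u)_{u\in S_{n+1}}$ remains linearly independent, since $m_v$ lies outside the span $M_{S_n}$ of the linearly independent family $(m_u)_{u\in S_n}$.

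What remains is to see that the procedure halts. At every non-terminal step $\dim M_{S_{n+1}}=\dim M_{S_n}+1$, so the sequence $(\dim M_{S_n})_n$ strictly increases; since $\dim M=1+\n\bd<\infty$, there is an $n$ with $M_{S_n}=M$, and this $S_n$ is the required $S'$: the family $(m_u)_{u\in S_n}$ is linearly independent by construction and spans $M$ because $M_{S_n}=M$, hence it is a basis. I do not anticipate a genuine obstacle; the only points deserving a line of justification are the stability of linear independence when one out-of-span vector is adjoined, the termination via the dimension count, and the already-recorded fact that $S_n\cup\set v$ is a tree whenever $v\in C(S_n)$ --- all of which are immediate.
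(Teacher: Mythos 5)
Your proof is correct and follows the same route as the paper's: apply the contrapositive of Lemma~\ref{tree generator} to find $v\in C(S)$ with $m_v\notin M_S$, adjoin it (using the remark that $S\cup\set v$ is again a tree), and iterate until the span fills $M$. You merely make the paper's ``proceed by induction'' explicit by spelling out the chain and the dimension-count termination argument.
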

\begin{proof}
If $M_S=M$, then we are done.
Otherwise, by the previous lemma, there exists $v\in C(S)$ such that $m_v\notin M_S$.
Then $S'=S\sqcup\set v$ is a tree and $(m_u)_{u\in S'}$ are linearly independent. 
Then we proceed by induction.
\end{proof}

\subsection{The cells}
To define a cell decomposition of the non-commutative Hilbert scheme $\cN_\bd$, we need to fix a total order on the set of paths $\cP$.
We will say that a total order $\le$ on $\cP$ is \idef{admissible} if it extends the partial order~$\preceq$
on \cP.
We will say that a total order $\le$ on \cP
is \idef{monomial} if
\begin{enumerate}
\item $\le$ extends the partial order $\preceq$ on $\cP$.
\item If $u<v$, then $au<av$ for any compatible arrow $a$.
\item $\le$ is a well-order.
\end{enumerate}

\begin{example}[Shortlex order]
\label{shortlex}
Assume that the set of arrows ~$Q^\bw_1$
is totally ordered.
Then, for  two paths in $\cP$
\begin{equation}\label{u-v}
u=a_m\dots a_1,\qquad
v=b_n\dots b_1,
\end{equation}
we say that $u<v$ if $m<n$ or if $m=n$ and $a_k<b_k$ for the minimal $k$ with $a_k\ne b_k$.
This is a monomial order.
\end{example}

\begin{example}[Weighted shortlex order]
Let $Q^\f_1$ be totally ordered and let $\wt:Q^\f_1\to\bR_{>0}$ be a map.
For any path $u=a_m\dots a_1$, we define its weight $\wt(u)=\sum_i\wt(a_i)$.
Given two paths $u,v$ \eqref{u-v}, we say that $u<v$ if $\wt(u)<\wt(v)$ or if $\wt(u)=\wt(v)$ and $u<v$ \wrt the shortlex order.
This is a monomial order.
\end{example}

\begin{example}[Lex order]
As before, let $Q^\bw_1$ be totally ordered.
Given two paths $u,v$ \eqref{u-v}, we say that $u<v$ 
if $a_k<b_k$ for the minimal $k$ with $a_k\ne b_k$ (if such $k$ exists) 
or if $m<n$ and $a_k=b_k$ for all $1\le k\le m$.
This is the order used in 
\cite{reineke_cohomology,engel_smooth}.
It is admissible, but not monomial.
For example, for the quiver $Q^\f$ with arrows $f:\infty\to0$ and $a,b:0\to 0$ and the order on arrows $f<a<b$,
we have $af<a^2f$, but $baf>ba^2f$.
\end{example}

In what follows, we fix an admissible order on $\cP$.
For any tree $S\in\cP(\bd)$, we define the sets
\begin{enumerate}
\item 
$U_S\sbs \cN_\bd$ that consists of  $M\in\cN_\bd$ such that the tuple
$(m_u)_{u\in S}$ is a basis of $M$.
\item
$Z_S\sbs U_S$ that consists of $M\in U_S$ such that,
for every $v\in C(S)$, we have 
$m_v\in M_{S_{<v}}$.
\end{enumerate}

\begin{remark}
Note that $U_S$ is independent of the choice of the total order on $\cP$, while $Z_S$ depends on it.
It follows from Corollary \ref{basis ext} that $\cN_\bd$ is covered by the sets $U_S$.
\end{remark}

The following result was proved in \cite[Lemma 3.4]{reineke_cohomology} for the $m$-loop quiver. The proof for a general quiver is analogous. We give it for completeness.

\begin{lemma}\label{lm:dim U_S}
For any tree $S\in\cP(\bd)$, 
the set $U_S\sbs \cN_\bd$ is open.
It is an affine space of dimension 
$\#\bigcup_{i\in I}\rbr{S_i\xx C(S)_i}$.
\end{lemma}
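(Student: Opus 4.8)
The plan is to give an explicit parametrization of $U_S$ by an affine space. First I would observe that, given $M \in \cN_\bd$ with distinguished generator $m_*$, the condition $M \in U_S$ is precisely that $(m_u)_{u \in S}$ is a basis of $M_{t(u)}$ at each vertex, i.e.\ that the $\bd_i \times \bd_i$ matrix recording the vectors $m_u$ with $t(u) = i$ is invertible. Using the free $G_\bd$-action on $R^{\f,\st}(Q,\bd)$, I would use this basis to rigidify: for $M \in U_S$ there is a unique representative in the $G_\bd$-orbit for which the chosen vectors $(m_u)_{u\in S}$ become the standard basis vectors of the $\bC^{\bd_i}$ (ordered, say, by the fixed admissible order). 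Concretely, the root $* = e_\infty$ contributes the framing vectors, and one builds up the basis along the tree $S$: since $S$ is a lower set, every $u \in S \setminus \{*\}$ is obtained from its parent $p(u) \in S$ by applying one arrow $a$, and $m_u = a\, m_{p(u)}$ is declared to be a new standard basis vector. This shows $U_S$ is a \emph{scheme-theoretic} quotient, identified with the set of those $M \in R^\f(Q,\bd)$ in ``$S$-normal form'', and hence $U_S$ is locally closed in $\cN_\bd$.

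Next I would count the remaining free parameters. After the rigidification, the framing maps and the matrix entries of the arrow actions $M_a$ are almost entirely pinned down: for each $u \in S$ which is \emph{not} a leaf-type obstruction, applying an arrow $a$ with $s(a) = t(u)$ either lands on another element of $S$ (namely $au \in S$), in which case $a\, m_u = m_{au}$ is a prescribed standard basis vector and contributes nothing free; or $au \notin S$, in which case $au \in C(S)$ (because $S$ is a lower set, so the parent $u$ of $au$ lies in $S$), and then $m_{au} = a\, m_u$ is an arbitrary vector in $M_{t(au)} = \bC^{\bd_{t(au)}}$, contributing $\bd_{t(au)} = \# S_{t(au)}$ free coordinates. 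Summing over all such pairs $(u, au)$ with $u \in S$, $au \in C(S)$ gives exactly $\sum_{i \in I} \# S_i \cdot \# C(S)_i = \#\bigcup_{i \in I}(S_i \times C(S)_i)$ free parameters, each ranging independently over $\bC$; this exhibits $U_S$ as an affine space of the stated dimension. I should double-check that \emph{every} matrix coefficient of every $M_a$ and every framing map is accounted for exactly once by this bookkeeping — this follows because the vectors $\{m_u : u \in S\}$ form a basis, so $M_a$ is determined by its values on them, and each value $a\,m_u$ falls into exactly one of the two cases above.

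Finally I would verify openness of $U_S$ in $\cN_\bd$. This is the one point requiring a small argument beyond the normal form: $U_S$ is the locus where the appropriate minors (the determinants of the matrices of $(m_u)_{u \in S_i}$ in some/any local trivialization of $\cN_\bd$) are nonzero, which is an open condition; alternatively, and more cleanly, the subset $\tilde U_S \subseteq R^{\f,\st}(Q,\bd)$ of representations for which $(m_u)_{u \in S}$ is a basis is $G_\bd$-invariant and open (non-vanishing of a determinant), so its image $U_S$ under the quotient map $R^{\f,\st}(Q,\bd) \to \cN_\bd$ is open. I expect the main obstacle to be purely organizational: setting up the $S$-normal form so that the inductive construction of the basis along the tree is manifestly well-defined (it uses crucially that $S$ is a lower set and that a chosen total order on $S$ exists, though the final count does not depend on that order), and then confirming the claimed dimension count by carefully matching parameters to matrix entries without double-counting or omission. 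There are no deep difficulties — the content is entirely combinatorial linear algebra on the tree $S$ and its critical set $C(S)$.
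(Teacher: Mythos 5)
Your proposal is correct and follows essentially the same route as the paper's proof: both identify $U_S$ with an affine space via the coordinates $c_{u,v}$ expressing $m_v$ for $v \in C(S)_i$ in terms of the basis $(m_u)_{u\in S_i}$, with your ``$S$-normal form'' rigidification being the same move as the paper's ``fix a basis $(m_u)_{u\in S_i}$ of each $M_i$'' when constructing the inverse map $\mathbb{A}^N \to U_S$. The one cosmetic remark: you assert $U_S$ is ``locally closed'' as an intermediate step from the normal-form identification before separately arguing openness at the end, whereas the paper just observes upfront that linear independence of $(m_u)_{u\in S}$ is an open condition; your final paragraph supplies the same openness argument, so nothing is missing.
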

\begin{proof}
The condition $M\in U_S$ means that the vectors $(m_u)_{u\in S}$ are linearly independent and this is an open condition. We may express $m_v$ in a unique way as a linear combination $m_v = \sum_{u \in S_i} c_{u,v} m_u$ for every $v \in C(S)_i$ and every $i \in I$. The scalars $c_{u,v}$ provide a morphism $U_S \to \mathbb{A}^N$, where 
$N=\#\bigcup_{i\in I}S_i\xx C(S)_i$.

Conversely, let us define a map $\mathbb{A}^N \to U_S$. Fix a basis $(m_u)_{u \in S_i}$ of each vector space $M_i$ for $i \in Q^{\bw}_0$. To a tuple $(c_{u,v})$ of scalars with $(u,v) \in \bigcup_{i \in I} S_i \times C(S)_i$, we assign a representation $M \in U_S$ as follows.
To specify a representation, we need to specify the maps $a:M_i\to M_j$ for all arrows $a:i\to j$ in $Q^\bw$.
This means that, for every $u\in S_i$, we need to specify $am_u\in M_j$.
If $au\in S_j$, then we set $am_u=m_{au}$.
If $au\in C(S)_j$, then we define $am_u=\sum_{u'\in S_j}c_{u',au}m_{u'}$. The thus obtained representation $M$ lies in $U_S$.
The two maps are mutually inverse.
\end{proof}

\begin{lemma}\label{lm:dim Z_S}
For any tree $S\in\cP(\bd)$, 
the set $Z_S\sbs \cN_\bd$ is locally-closed.
It is an affine space of dimension 
$\#\bigcup_{i\in I}\sets{(u,v)\in S_i\xx C(S)_i}{u<v}$.
\end{lemma}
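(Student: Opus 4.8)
The plan is to mimic the proof of Lemma \ref{lm:dim U_S}, but refining it so as to keep track of the extra constraints imposed by the conditions $m_v\in M_{S_{<v}}$ for $v\in C(S)$. First I would observe that, by definition, $Z_S\sbs U_S$ is cut out by imposing, for each $i\in I$ and each $v\in C(S)_i$, that the coefficients $c_{u,v}$ in the expansion $m_v=\sum_{u\in S_i}c_{u,v}m_u$ vanish whenever $u\not< v$; equivalently, $Z_S$ is the intersection of $U_S$ with the affine subspace of $\bA^N$ where those coordinates are set to zero. Under the isomorphism $U_S\iso\bA^N$ of the previous lemma, this is manifestly a closed affine subspace of $U_S$, hence locally closed in $\cN_\bd$, and it is an affine space of dimension equal to the number of remaining free coordinates, namely $\#\bigcup_{i\in I}\sets{(u,v)\in S_i\xx C(S)_i}{u<v}$.

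The only point requiring care is that the closed subset of $U_S$ so described really does coincide with $Z_S$ as a set, i.e. that demanding $c_{u,v}=0$ for all $u\in S_i$ with $u\not< v$ is the same as demanding $m_v\in M_{S_{<v}}$. Here I would use that the chosen total order is admissible, so that for $v\in C(S)_i$ the set $S_{<v}\cap S_i=\sets{u\in S_i}{u<v}$; then $m_v\in M_{S_{<v}}$ says precisely that $m_v$ lies in the span of $(m_u)_{u\in S_i,\,u<v}$, which, since $(m_u)_{u\in S_i}$ is a basis of $M_i$, is equivalent to $c_{u,v}=0$ for the complementary indices. This matching of the two descriptions is the main (though minor) obstacle; once it is in place the dimension count and the locally-closedness are immediate.

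Alternatively, and perhaps more cleanly for the write-up, I would repeat the explicit construction of the mutually inverse maps from the proof of Lemma \ref{lm:dim U_S}, simply replacing $\bA^N$ by $\bA^{N'}$ with $N'=\#\bigcup_{i\in I}\sets{(u,v)\in S_i\xx C(S)_i}{u<v}$ and, in the definition of the map $\bA^{N'}\to\cN_\bd$, setting $am_u=\sum_{u'\in S_j,\,u'<au}c_{u',au}m_{u'}$ when $au\in C(S)_j$. One checks that the resulting $M$ lies in $U_S$ and satisfies the defining condition of $Z_S$, and conversely that any $M\in Z_S$ arises this way from a unique tuple of scalars indexed by $\{(u,v):u<v\}$; openness of $U_S$ inside $\cN_\bd$ together with closedness of $Z_S$ inside $U_S$ gives that $Z_S$ is locally closed. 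Either route yields the claim, and I expect no serious difficulty beyond bookkeeping.
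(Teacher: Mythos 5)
Your proposal is correct and follows the same route as the paper: both express $Z_S$ as the linear subspace of $U_S\iso\bA^N$ cut out by the vanishing of the coordinates $c_{u,v}$ with $u\not< v$, which is closed in $U_S$ and hence locally closed in $\cN_\bd$, of the stated dimension. One small remark: admissibility of the order plays no role here (the identity $S_{<v}\cap S_i=\sets{u\in S_i}{u<v}$ is definitional, and the equivalence $m_v\in M_{S_{<v}}\iff m_v\in\angs{m_u}{u\in S_i,\,u<v}$ comes from $M=\bigoplus_j M_j$, not from the order), so you can drop that invocation.
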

\begin{proof}
For any $M\in U_S$, the condition $m_v\in M_{S_{<v}}$ for all $v\in C(S)_i$ is a closed condition: when writing $m_v = \sum_{u \in S_i} c_{u,v}m_u$ as in the proof of Lemma \ref{lm:dim U_S}, it is given by the vanishing of 
$c_{u,v}$ for all $u\in S_i$ with $u>v$.
This also shows that $Z_S$ is a linear subspace of $U_S$ of the required dimension.
\end{proof}

Lemma \ref{lm:dim Z_S} implies that $Z_S\sbs\cN_\bd$ has dimension
\begin{equation}\label{dim Z_S}
d(S)=\sum_{v\in C(S)}k_v,\qquad
k_v=\#\sets{u\in S_{t(v)}}{u<v}.
\end{equation}
Lemma \ref{lm:dim U_S} implies that 
\begin{equation}\label{dim U_S}
\dim\cN_\bd
=\dim U_S=\udim C(S)\cdot\bd.
\end{equation}
On the other hand, we have  $\dim\cN_\bd=\bw\cdot\bd-\hi(\bd,\bd)$
\eqref{dim Nd}.
We can prove more precisely

\begin{lemma}
\label{lm:dim CS}
For any tree $S\in\cP(\bd)$, the dimension vector of the critical set $C(S)$ is
\begin{equation}\label{dim CS}
\udim C(S)=\bw-\hi(\bd,-),
\end{equation}
meaning that $\udim C(S)\cdot\be=\bw\cdot\be-\hi(\bd,\be)$, for all $\be\in\bZ^I$.
\end{lemma}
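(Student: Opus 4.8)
The plan is to count, for each vertex $j \in I$, the number of paths in the critical set $C(S)$ that end at $j$, and show this equals $\bw_j - \hi(\bd, \mathbf e_j)$ where $\mathbf e_j$ is the $j$-th coordinate vector; the general statement then follows by linearity in $\be$. The key combinatorial idea is to partition the paths in $\cP$ ending at $j$ according to their first arrow. A path $v \in \cP$ with $t(v) = j$ is either the trivial path $e_\infty$ (only if $j = \infty$, which we exclude since we work over $I$), or it has the form $v = a u$ for a unique arrow $a$ of $Q^\bw$ with $t(a) = j$ and a unique shorter path $u \in \cP$ with $t(u) = s(a)$. The arrows of $Q^\bw$ ending at $j$ are: the $\bw_j$ framing arrows $\infty \to j$, for which $u$ must be the root $*$; and, for each arrow $a : i \to j$ in $Q_1$, the choices of $u \in \cP$ with $t(u) = i$.

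Next I would use the defining property of the critical set: $v \in C(S)$ if and only if $v \notin S$ but every predecessor of $v$ lies in $S$. Since $\cP_{\prec v}$ is the chain consisting of $u$ and all predecessors of $u$ (when $v = au$), and since $S$ is a lower set (a subtree), the condition ``all predecessors of $v$ are in $S$'' is equivalent to ``$u \in S$.'' Therefore, writing things out, the paths $v \in C(S)_j$ split as follows. The framing arrows contribute exactly $\bw_j$ elements (each $a : \infty \to j$ gives $v = a*$; since $* \in S$ always, and $v \in S$ would force $v$ to be counted in $S_j$ — wait, here one must be careful): more precisely, for each of the $\bw_j$ framing arrows $a : \infty \to j$, the path $a*$ lies in $S \sqcup C(S)$, and for each arrow $a : i \to j$ in $Q_1$ and each $u \in S_i$, the path $au$ lies in $S \sqcup C(S)$. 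This sets up a bijection between $S_j \sqcup C(S)_j$ and the disjoint union of $\bw_j$ singletons with $\bigsqcup_{(a : i \to j) \in Q_1} S_i$. Counting cardinalities:
\begin{equation*}
\#S_j + \#C(S)_j = \bw_j + \sum_{(a : i \to j) \in Q_1} \#S_i = \bw_j + \sum_{(a : i \to j) \in Q_1} \bd_i.
\end{equation*}
Since $\#S_j = \bd_j$ (as $S \in \cP(\bd)$), this rearranges to $\#C(S)_j = \bw_j - \bd_j + \sum_{(a:i\to j)\in Q_1}\bd_i = \bw_j - \hi(\bd, \mathbf e_j)$, which is the claim for $\be = \mathbf e_j$; linearity finishes the proof.

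The main obstacle is verifying the bijection carefully — specifically, that the assignment $(a, u) \mapsto au$ from (arrows ending at $j$, together with an appropriate source path in $S$ or the root) to $S_j \sqcup C(S)_j$ is well-defined, injective, and surjective. Well-definedness and injectivity are immediate from unique path factorization in a quiver. Surjectivity onto $S_j$ uses that $S$ is a subtree (lower set): if $v = au \in S_j$ is nontrivial then its parent $u$ is a predecessor of $v$, hence $u \in S$. Surjectivity onto $C(S)_j$ uses the characterization $v \in C(S) \iff v \notin S \text{ and } p(v) \in S$ — here one should note that $v \in C(S)$ requires all of $\cP_{\prec v} \subseteq S$, but since $\cP_{\prec v}$ is a chain with maximum $p(v)$ and $S$ is a lower set, $\cP_{\prec v} \subseteq S \iff p(v) \in S$. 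One subtlety to address: the root $*$ itself is always in $S$ (subtrees are nonempty lower sets, so they contain the minimal element), so the framing arrows always contribute genuine elements to $S_j \sqcup C(S)_j$ and there is no degenerate case to exclude there. I would also remark that this lemma reconciles \eqref{dim U_S} with \eqref{dim Nd}, since $\udim C(S) \cdot \bd = \bw \cdot \bd - \hi(\bd,\bd)$ is exactly \eqref{dim Nd}.
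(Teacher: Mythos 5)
Your proof is correct, and it takes a genuinely different route from the paper's. The paper proves the lemma by induction on $\#S$: it starts from $S=\{*\}$ and tracks how $C(S)$ changes when a single element $v\in C(S)_i$ is adjoined (removing $v$ from $C(S)$ and adding the children $\ch(v)$, which has dimension-vector effect $-\hi(e_i,-)$). You instead give a non-inductive double-counting argument: for each $j\in I$ you set up a bijection between $S_j\sqcup C(S)_j$ and the set of pairs consisting of an arrow $a$ of $Q^\bw$ ending at $j$ together with a path $u\in S$ with $t(u)=s(a)$ (using that $v\in S_j\sqcup C(S)_j$ iff $t(v)=j$ and $p(v)\in S$, since $S$ is a lower set of a tree). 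Counting gives $\bd_j+\#C(S)_j=\bw_j+\sum_{(a:i\to j)\in Q_1}\bd_i$, and rearranging yields the claim. Both arguments are correct; your version is somewhat more global and makes the combinatorial identity manifest in a single step, while the paper's inductive argument localizes the computation to the effect of adding one tree vertex and fits naturally with other inductive arguments in the section (e.g.\ the proof of Theorem~\ref{bijection}, which likewise builds trees one element at a time). One cosmetic remark: you refer to ``the first arrow'' of the path $v=au$, but what you factor off is the arrow $a$ with $t(a)=j$, which in the convention $v=a_n\dots a_1$ is the \emph{last} arrow $a_n$; your subsequent usage is unambiguous, but the phrasing should be fixed.
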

\begin{proof}
The statement is clear for $S=\set{*}$.
Assume that the statement is true for a tree ~$S$ and let $v\in C(S)_i$ and $S'=S\cup \set v$.
Then $C(S')=C(S)\ms\set v\cup \ch(v)$, where $\ch(v)=\sets{u\in\cP}{p(u)=v}$ is the set of children of $v$ (they correspond to arrows that start at $i$).
Therefore
$$\udim C(S')=\udim C(S)-e_i+\udim\ch(v)=\udim C(S)-\hi(e_i,-).$$
If $\bd=\udim S$, then $\bd'=\udim S'=\bd+e_i$, hence
$$\udim C(S')=\bw-\hi(\bd,-)-\hi(e_i,-)=\bw-\hi(\bd',-)$$ and we proceed by induction.
\end{proof}


\subsection{Properties of monomial orders}
\label{sec:monom}
In this section we will assume that $\cP$ is equipped with a monomial order.

\begin{lemma}\label{lm:C(S)2}
Let $M\in\cN_\bd$ and $S\sbs \cP$ be a tree.
If $v\in\cP$ is such that
$m_u\in M_{S_{\le u}}$, for all $u\in C(S)$ with $u\le v$,
then $m_v\in M_{S_{\le v}}$.
\end{lemma}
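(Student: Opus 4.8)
The plan is to prove the statement by transfinite induction on $v$ with respect to the well-order $\le$ — this is exactly where property (3) of a monomial order enters, since $\cP$ is infinite in general. I would fix $M\in\cN_\bd$ and the tree $S$ throughout, assume the conclusion of the lemma for every $v'<v$ in place of $v$, suppose that $m_u\in M_{S_{\le u}}$ for all $u\in C(S)$ with $u\le v$, and aim to deduce $m_v\in M_{S_{\le v}}$.

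First I would clear the two trivial cases: if $v\in S$ then $v\in S_{\le v}$ and there is nothing to prove, while if $v\in C(S)$ the hypothesis applied to $u=v$ gives the conclusion at once. So one may assume $v\notin S\cup C(S)$; then $v\ne *$, because the root lies in every non-empty lower set, so $v$ has a parent $w=p(v)$ and $v=aw$, where $a$ is the last arrow of the path $v$. Since $w\prec v$ and $\le$ refines $\preceq$, we get $w<v$, and since any $u\in C(S)$ with $u\le w$ also satisfies $u\le v$, the inductive hypothesis applies to $w$ and yields $m_w\in M_{S_{\le w}}$.

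The heart of the argument is then a one-step propagation from $w$ to $v=aw$: writing $m_w=\sum_{u\in S,\,u\le w}c_u m_u$ and multiplying by $a$ gives $m_v=\sum_{u\in S,\,u\le w}c_u m_{au}$. For each $u$ occurring in this sum, $au\in S$ or $au\in C(S)$ — exactly as noted in the proof of Lemma \ref{tree generator}, because all predecessors of $au$ already lie in $S$ — and property (2) of a monomial order turns $u\le w$ into $au\le aw=v$. Hence if $au\in S$ then $m_{au}\in M_{S_{\le v}}$ directly, and if $au\in C(S)$ then $au\le v$ lets me invoke the hypothesis to get $m_{au}\in M_{S_{\le au}}\subseteq M_{S_{\le v}}$; either way $m_v\in M_{S_{\le v}}$, closing the induction.

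I expect the only delicate point to be keeping the bookkeeping between $\preceq$ and $\le$ straight: it is precisely the monomiality implication $u\le w\Rightarrow au\le aw$ that confines every newly produced generator $m_{au}$ to the range $\{x\le v\}$ where the hypothesis is available, and this is exactly what breaks down for a merely admissible order, as the lex-order example in the text illustrates. Apart from this, the proof is a short and routine induction.
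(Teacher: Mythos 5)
Your proof is correct and follows essentially the same strategy as the paper's: well-founded induction on $v$ (justified by monomial axiom (3)), the reduction to the case where the parent of $v$ lies outside $S$, expanding $m_{p(v)}$ in the basis indexed by $S_{\le p(v)}$, and then pushing forward by the last arrow $a$ while using monomial axiom (2) to keep all new indices $au$ below $v$. The only cosmetic difference is in the final step: the paper applies the inductive hypothesis directly to each $aw<v$, whereas you split into the two cases $au\in S$ (trivial) and $au\in C(S)$ (the lemma's standing hypothesis applies because $au\le v$); both routes close the induction in one line.
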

\begin{proof}
We can assume by induction that $m_w\in M_{S_{\le w}}$ for all $w<v$.
Let $v=au$ for some arrow $a$ and $u\in\cP$.
If $u\in S$, then $v\in S\cup C(S)$ and we are done.
If $u\notin S$, then $m_u\in M_{S_{<u}}$,
hence $m_u=\sum_{w\in S_{<u}}c_wm_w$ for some scalars $c_w$.
For any $w\in S_{<u}$, we have $aw<au=v$, hence $m_{aw}\in M_{S_{\le aw}}\sbs M_{S_{<v}}$.
We conclude that 
$m_v=am_u=\sum_{w\in S_{<u}}c_wm_{aw}\in M_{S_{<v}}$.
\end{proof}

\begin{corollary}
Let $M\in\cN_\bd$ and $S\sbs \cP$ be a tree such that
$m_v\in M_{S_{\le v}}$, for all $v\in C(S)$.
Then the same is true for all $v\in\cP$.
\end{corollary}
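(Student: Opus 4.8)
The plan is that this corollary should fall out of Lemma \ref{lm:C(S)2} by essentially a one-line argument, since the substantive content — the transfinite induction along the well-order — is already packaged inside that lemma. First I would fix an arbitrary $v\in\cP$. The hypothesis of the corollary says $m_u\in M_{S_{\le u}}$ for \emph{every} $u\in C(S)$; in particular it holds for every $u\in C(S)$ with $u\le v$, which is exactly the assumption required to invoke Lemma \ref{lm:C(S)2} for this $v$. The lemma then yields $m_v\in M_{S_{\le v}}$, and since $v$ was arbitrary the conclusion holds on all of $\cP$.

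The only point worth a moment's care is to check that nothing is lost when passing from a condition on $C(S)$ to a statement about all of $\cP$. Any $v=au\in\cP$ with $u\notin S$ has $m_u$ expressible over $S_{<u}$, and monomiality gives $aw<au=v$ for $w\in S_{<u}$, so $m_v$ can be pushed below $v$; combined with the well-ordering this is precisely the mechanism that Lemma \ref{lm:C(S)2} already encodes. Thus the monomial-order hypothesis in force throughout this subsection supplies the well-foundedness, and the corollary is just the ``global'' restatement of the lemma obtained by quantifying over $v$ — there is no genuine obstacle.

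If one wanted a self-contained argument not quoting the lemma, I would instead run the well-ordered induction directly on $(\cP,\le)$: assuming $m_w\in M_{S_{\le w}}$ for all $w<v$, treat the cases $v\in S$, $v=au$ with $au\in S\cup C(S)$, and $v=au$ with $u\notin S$. But this merely reproduces the proof of Lemma \ref{lm:C(S)2} verbatim, so invoking the lemma is the cleaner route. I do not anticipate any real difficulty here.
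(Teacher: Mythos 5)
Your proposal is correct and matches the intended (and essentially unique) derivation: the corollary is an immediate consequence of Lemma \ref{lm:C(S)2}, obtained by fixing an arbitrary $v\in\cP$ and observing that the corollary's hypothesis gives exactly the hypothesis of the lemma for that $v$. The paper states the corollary without proof for precisely this reason.
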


We will see in Lemma \ref{intersect1} that the cells $Z_S\sbs\cN_\bd$, for $S\in\cP(\bd)$, are disjoint.
The following result gives an algorithm that, for any $M\in\cN_\bd$, produces a tree $S\in\cP(\bd)$ such that $M\in Z_S$.
It implies (in the case of monomial orders) that $\cN_\bd$ is a disjoint union of the cells $Z_S$ (\cf Theorem \ref{th:cell-dec}, where this statement is proved for admissible orders).

\begin{theorem}
Given $M\in\cN_\bd$, consider the maximal sequence $(v_1,\dots,v_n)$ such that
$$v_k=\min\sets{v\in\cP}
{m_{v}\notin \angs{m_{v_i}}{i<k}},\qquad 1\le k\le n.$$
Then $S=(v_1<\dots<v_n)$ is a tree and $M\in Z_S$.
\end{theorem}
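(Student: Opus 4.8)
The plan is to run three steps, writing $S_k=\{v_1,\dots,v_k\}$ and $V_k=\langle m_{v_1},\dots,m_{v_k}\rangle=M_{S_k}$, with $V_0=0$. Step 1 is to check that the greedy sequence is well defined, strictly increasing, and exhausts $M$. Since a monomial order is a well-order, $v_k$ exists as long as $\{v\in\cP:m_v\notin V_{k-1}\}\ne\es$, and this set is empty exactly when $V_{k-1}=M$; as the $m_u$ ($u\in\cP$) span $M$ and each step strictly enlarges $V_{k-1}$, the sequence stops at some $n$ with $V_n=M$. Hence $(m_{v_i})_{1\le i\le n}$ is a basis of $M$, so $n=\dim M$; since $*=\min\cP$ and $m_*\ne0$ we get $v_1=*$, and recording the target vertices $t(v_i)$ gives $\udim S=\bd$. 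Finally $v_1<\dots<v_n$: for $k\ge2$, every $v<v_{k-1}$ has $m_v\in V_{k-2}\subseteq V_{k-1}$, and $m_{v_{k-1}}\in V_{k-1}$, so $\{v:m_v\notin V_{k-1}\}\subseteq\{v:v>v_{k-1}\}$.

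Step 2, that $S$ is a tree, is the heart. I would prove by induction on $k$ the conjunction of (i) $S_k$ is a tree, and (ii) every $v\in\cP$ with $m_v\in V_k$ satisfies $m_v\in M_{(S_k)_{\le v}}$. The case $k=1$ is clear since $v_1=*$. For the step, grant (i) and (ii) for $k-1$. Suppose $\cP_{\prec v_k}\not\subseteq S_{k-1}$; since $v_k\notin S_{k-1}$ by Step 1, this means $v_k\notin C(S_{k-1})$, so every $w\in C(S_{k-1})$ with $w\le v_k$ in fact has $w<v_k$, hence $m_w\in V_{k-1}$ by minimality of $v_k$, hence $m_w\in M_{(S_{k-1})_{\le w}}$ by (ii) for $k-1$. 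Applying Lemma \ref{lm:C(S)2} to the tree $S_{k-1}$ and the element $v_k$ now gives $m_{v_k}\in M_{(S_{k-1})_{\le v_k}}=M_{S_{k-1}}=V_{k-1}$ (the middle equality because $v_1<\dots<v_{k-1}<v_k$), contradicting $m_{v_k}\notin V_{k-1}$. Hence $\cP_{\prec v_k}\subseteq S_{k-1}$, so $S_k=S_{k-1}\cup\{v_k\}$ is a tree, which is (i). For (ii) at stage $k$: if $m_v\in V_{k-1}$, use (ii) for $k-1$; otherwise $v\ge v_k$ by minimality, so $S_k\subseteq(S_k)_{\le v}$ and thus $m_v\in V_k\subseteq M_{(S_k)_{\le v}}$. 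This completes the induction, so $S=S_n$ is a tree with $\udim S=\bd$, \ie $S\in\cP(\bd)$.

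Step 3 is to read off $M\in Z_S$. Since $(m_u)_{u\in S}$ is a basis of $M$, $M\in U_S$; and for $v\in C(S)$ we have $v\notin S$, so $S_{<v}=S_{\le v}$, while $m_v\in V_n=M$, so (ii) for $k=n$ yields $m_v\in M_{S_{\le v}}=M_{S_{<v}}$, which is exactly the condition defining $Z_S$. Therefore $M\in Z_S$. The one delicate point is invariant (ii): it upgrades the bare output of the greedy rule, $m_w\in M_{S_{k-1}}$, to the truncated statement $m_w\in M_{(S_{k-1})_{\le w}}$ that Lemma \ref{lm:C(S)2} needs, and it is only through that lemma that the monomial (as opposed to merely admissible) hypothesis enters; everything else is bookkeeping with the well-order and with the definition of the critical set $C(S)$.
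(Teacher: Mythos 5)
Your proof is correct and follows essentially the same route as the paper's: both arguments hinge on Lemma \ref{lm:C(S)2} to show that the next greedy choice $v_k$ must lie in $C(S_{k-1})$, and your invariant (ii) ($m_v\in V_k\Rightarrow m_v\in M_{(S_k)_{\le v}}$) is exactly the contrapositive of the paper's interior claim that $m_u\notin M_{S'_{\le u}}$ forces $S'_{\le u}=S'$ and $m_u\notin M_{S'}$. The only cosmetic difference is that you establish this auxiliary claim by an induction that runs alongside the tree-ness induction, whereas the paper proves it in one direct step from the minimality of the $v_{i+1}$.
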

\begin{proof}
We have 
$m_{v_{k+1}}\notin \angs{m_{v_i}}{i<k}$,
hence $v_k<v_{k+1}$ by  the minimality of $v_k$.

Let us assume by induction that $S'=(v_1<\dots<v_k)$ is a tree.
If $u\in\cP$ is such that $m_u\notin M_{S'_{\le u}}$,
then $S'_{\le u}=S'$ and $m_u\notin M_{S'}$.
Indeed, otherwise $S'_{\le u}=(v_1<\dots<v_i)$ for some $i<k$.
But then $v_{i+1}\le u$ by the minimality of $v_{i+1}$.
Therefore $v_{i+1}\in S'_{\le u}$, a contradiction.

Let $v=v_{k+1}\in\cP$ be the minimal element such that $m_v\notin M_{S'}$.
For any $u<v$, we have $m_u\in M_{S'}$, hence $m_u\in M_{S'_{\le u}}$ by the previous statement.
If $v\notin C(S')$, then we conclude from Lemma \ref{lm:C(S)2} that $m_v\in M_{S'_{\le v}}$, a contradiction.
Therefore $v\in C(S')$ and $S'\cup\set{v}$ is a tree.

%

By the maximality of the sequence, we conclude that $m_v\in M_S$ for all $v\in\cP$, hence $(m_u)_{u\in S}$ is a basis of $M$.
If $M\notin Z_S$, then there exists an element $v\in \cP$ such that $m_v\notin M_{S_{\le v}}$.
But then $m_v\notin M_S$ by the statement we proved.
This is a contradiction.
\end{proof}

\subsection{Cell decomposition}
\label{sec:cell dec}
In this section we give slightly simplified proofs  of some results from \cite{engel_smooth,reineke_cohomology}.
As before, we assume that \cP is equipped with an admissible order.
We define a total order on the set of trees $\cP(\bd)$ as follows.
Given trees $S,S'\in\cP(\bd)$ we write them in the form
\begin{equation}\label{tree order}
S=(v_1<\dots<v_k),\qquad S'=(v'_1<\dots<v'_{k})
\end{equation}
and define $S'<S$ if, for the minimal $1\le i\le k$ with $v'_i\ne v_i$, we have $v'_i<v_i$.


\begin{lemma}\label{intersect1}
If $S'<S$, then $Z_S\cap U_{S'}=\es$.
In particular, if $S\ne S'$, then $Z_S\cap Z_{S'}=\es$.
\end{lemma}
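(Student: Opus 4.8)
The plan is to unwind the definitions of $Z_S$, $U_{S'}$, and the total order on $\cP(\bd)$, and to derive a contradiction from assuming $M \in Z_S \cap U_{S'}$. Write $S = (v_1 < \dots < v_k)$ and $S' = (v'_1 < \dots < v'_k)$ as in \eqref{tree order}, and let $i$ be the minimal index with $v'_i \ne v_i$, so $v'_i < v_i$ by the hypothesis $S' < S$. Since both trees agree below index $i$, the paths $v_1 = v'_1, \dots, v_{i-1} = v'_{i-1}$ form a common initial subtree $T$, and I would like to locate $v'_i$ in relation to $S$ and $C(S)$.

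First I would observe that $v'_i \in C(T)$: indeed $T \cup \{v'_i\}$ is the subtree consisting of the first $i$ elements of $S'$ (a lower set, since $S'$ is written in a way extending $\preceq$), so every predecessor of $v'_i$ lies in $T$, while $v'_i \notin T$. Because $v'_i < v_i$ and $v_i$ is the $i$-th element of $S$ while $v_1,\dots,v_{i-1} \in S$, the path $v'_i$ is strictly smaller than every element of $S \setminus T$; combined with $v'_i \notin T$ this gives $v'_i \notin S$, and moreover all predecessors of $v'_i$ lie in $T \subseteq S$, so in fact $v'_i \in C(S)$ as well. Now use $M \in Z_S$: the defining condition of $Z_S$ says $m_{v'_i} \in M_{S_{<v'_i}}$. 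But $S_{<v'_i}$ consists of those elements of $S$ that are $< v'_i$, and since $v'_i$ is strictly below all of $S \setminus T$ and the elements of $T$ are $v_1 < \dots < v_{i-1}$, we get $S_{<v'_i} \subseteq T = S'_{<v'_i}$. Hence $m_{v'_i} \in M_{S'_{<v'_i}}$, i.e. $m_{v'_i}$ is a linear combination of $m_{v'_1}, \dots, m_{v'_{i-1}}$, so the tuple $(m_u)_{u \in S'}$ is linearly dependent. This contradicts $M \in U_{S'}$, which by Lemma \ref{lm:dim U_S} requires $(m_u)_{u \in S'}$ to be linearly independent.

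For the second assertion, suppose $S \ne S'$. Since the order on $\cP(\bd)$ is total, either $S' < S$ or $S < S'$. In the first case the first part gives $Z_S \cap U_{S'} = \es$, and since $Z_{S'} \subseteq U_{S'}$ this yields $Z_S \cap Z_{S'} = \es$; the second case is symmetric. This completes the argument.

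The step I expect to be the main obstacle is the bookkeeping that identifies $v'_i$ as an element of $C(S)$ and pins down $S_{<v'_i} \subseteq T$ — in other words, carefully justifying that the "split point" between the two ordered trees behaves the way the picture suggests, using only that the order is admissible (extends $\preceq$) rather than monomial. Everything downstream is a short deduction from the definitions of $Z_S$ and $U_{S'}$ and from Lemma \ref{lm:dim U_S}; the one subtlety is making sure that writing $S$ and $S'$ "in increasing order" is compatible with their lower-set structure, which is exactly why admissibility of the order is invoked.
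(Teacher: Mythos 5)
Your argument is correct and follows essentially the same route as the paper: identify the first index $i$ where the two ordered tuples diverge, show $v'_i \in C(S)$, use $M \in Z_S$ to conclude $m_{v'_i} \in M_{S_{<v'_i}}$, observe that $S_{<v'_i} \subseteq S'_{<v'_i}$ by the choice of $i$, and derive a linear dependence contradicting $M \in U_{S'}$. The only cosmetic difference is how $v'_i \in C(S)$ is established — the paper writes $v'_i = av$ and checks the parent $v$ lies in $S$, whereas you note that the first $i$ elements of $S'$ form a lower set and hence $v'_i \in C(T) \subseteq C(S)$ — but these are the same underlying observation.
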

\begin{proof}
If $M\in Z_S\cap U_{S'}$, then $k=\dim M=\# S=\# {S'}$.
Let
$$S=(v_1<\dots<v_k),\qquad
S'=(v'_1<\dots<v'_k)$$
and let $1\le i\le k$ be minimal such that $v_i\ne v'_i$.
Then $v'_i<v_i$ and $v'_i\notin S$ (if $v'_i\in S$, then $v'_i=v_j$ for some $j<i$, hence $v'_j<v'_i=v_j$, a contradiction).
Note that $v_1=v'_1=*$, hence $v'_i\ne *$.
Let $v'_i=av$, where $a$ is an arrow.
Then $v<v'_i$ and $v\in S'$, hence $v=v'_j=v_j\in S$ for some $j<i$.
This implies that $v'_i=av\in C(S)$.
By the assumption $M\in Z_S$, we have 
$m_{v'_i}\in M_{S_{<v'_i}}$.
If $u\in S$ and $u< v'_i<v_i$, then $u=v_j=v'_j\in S'$ for some $j<i$.
This implies $m_{v'_i}\in M_{S'_{<v'_i}}$.
A contradiction to $M\in U_{S'}$.
\end{proof}

\begin{lemma}\label{lm2}
For any tree $S\in\cP(\bd)$, we have $U_S\sbs Z_S\cup \rbr{\bigcup_{S'<S}U_{S'}}$.
\end{lemma}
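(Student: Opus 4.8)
The plan is to establish the equivalent statement: if $M\in U_S$ and $M\notin Z_S$, then $M\in U_{S'}$ for some tree $S'<S$. The mechanism is a single ``reduction'' of $S$, steered by the $\le$-least element of $C(S)$ at which the condition defining $Z_S$ fails. First I would locate that place: writing $S=(v_1<\dots<v_k)$, so that $(m_u)_{u\in S}$ is a basis of $M$, the fact $M\notin Z_S$ means the set $\{u\in C(S):m_u\notin M_{S_{<u}}\}$ is nonempty, and as $C(S)$ is finite it has a $\le$-least element $v$. One checks $v_1=*<v<v_k$ — the last inequality because $M_{S_{<v}}\ne M=M_S$ — so there is a unique $j$ with $2\le j\le k$, $v_{j-1}<v<v_j$, and $S_{<v}=\{v_1,\dots,v_{j-1}\}$.

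Next I would perform the exchange. Put $T_0:=S_{<v}\cup\{v\}$. This is a tree (it is a lower set, using that $\le$ refines $\preceq$ and that the predecessors of $v$ lie in $S$ by $v\in C(S)$ and are all $<v$), and $(m_u)_{u\in T_0}$ is linearly independent (a subfamily of a basis together with $m_v\notin M_{S_{<v}}$). By Corollary~\ref{basis ext} I may enlarge $T_0$ to a tree $S'$ with $(m_u)_{u\in S'}$ a basis of $M$, so that $M\in U_{S'}$. Such an enlargement is realized as a chain $T_0=T^{(0)}\subseteq T^{(1)}\subseteq\dots\subseteq T^{(m)}=S'$ in which $T^{(i+1)}=T^{(i)}\cup\{w_i\}$ with $w_i\in C(T^{(i)})$ and $m_{w_i}\notin M_{T^{(i)}}$.

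The heart of the argument — and the step I expect to be the main obstacle — is to show that every adjoined element satisfies $w_i>v$. I would prove this by induction on $i$ via the invariant
\[
C(T^{(i)})\cap\{w\in\cP:w<v\}=C(S)\cap\{w\in\cP:w<v\}.
\]
The base case is a direct computation from the definitions of $C(T_0)$ and $C(S)$. For the inductive step, assume the invariant at stage $i$: if $w\in C(T^{(i)})$ with $w<v$, then $w\in C(S)$, so the minimality of $v$ gives $m_w\in M_{S_{<w}}\subseteq M_{S_{<v}}\subseteq M_{T^{(i)}}$; hence $w\ne w_i$, and since $v\in T_0\subseteq T^{(i)}$ rules out $w_i=v$, we get $w_i>v$. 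Then $C(T^{(i+1)})=(C(T^{(i)})\setminus\{w_i\})\cup\ch(w_i)$, and every element of $\ch(w_i)$ lies above $w_i$ in $\preceq$, hence above $v$ in $\le$, so the invariant persists. The one point requiring care is that an admissible order need not be a well-order, so every ``least element'' must be chosen inside a finite set ($C(S)$, resp.\ $C(T^{(i)})$) — which is exactly how the argument is arranged.

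Granting this, $S'$ is obtained from $T_0$ by adjoining only elements $>v$, so the elements of $S'$ that are $\le v$ are exactly $v_1<\dots<v_{j-1}<v$. Writing $S'=(v'_1<\dots<v'_k)$ — note $\#S'=\dim M=k=\#S$ — this forces $v'_i=v_i$ for $i<j$ and $v'_j=v<v_j$, whence $S'<S$ by the definition of the order on $\cP(\bd)$. Thus $M\in U_{S'}$ with $S'<S$, and the lemma follows.
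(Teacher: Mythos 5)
Your proof is correct and follows the same strategy as the paper: pick the $\le$-least $v\in C(S)$ with $m_v\notin M_{S_{<v}}$, form $\bar S=S_{<v}\cup\{v\}$, extend to a tree $S'$ with $M\in U_{S'}$ via Corollary~\ref{basis ext}, and show every element of $S'\setminus\bar S$ exceeds $v$, forcing $S'<S$. The only difference is how the last claim is verified: the paper argues by contradiction, picking the $\le$-least $w\in S'\setminus\bar S$ with $w<v$ and showing $w\in C(S)$ with $m_w\notin M_{S_{<w}}$, contradicting minimality of $v$; you instead unfold the inductive construction underlying Corollary~\ref{basis ext} into a chain $T_0\subseteq T^{(1)}\subseteq\cdots\subseteq S'$ and carry the invariant $C(T^{(i)})\cap\{w<v\}=C(S)\cap\{w<v\}$. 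Both are sound; the paper's version avoids re-opening the corollary and is a bit shorter, while yours makes the mechanism of the extension more explicit.
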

\begin{proof}


We need to show that
$U_S\ms Z_S\sbs\bigcup_{S'<S}U_{S'}$.
Let $M\in U_S\ms Z_S$
and let $v\in C(S)$ be minimal such that $m_v\notin M_{S_{<v}}$.
Consider the tree 
$$\bar S=\sets{u\in S}{u<v}\cup\set v.$$
Then the vectors $(m_u)_{u\in \bar S}$ are linearly independent and there exists a tree $\bar S\sbs S'\sbs\cP$ such that $M\in U_{S'}$ by Corollary \ref{basis ext}.
We will prove that $S'<S$.
We can write 
$$\bar S=(v_1<\dots<v_{i-1}<v),\qquad
S=(v_1<\dots<v_{i-1}<v_i<\dots),$$
where $v<v_i$.
We claim that for all $w\in S'\ms \bar S$, we have $w>v$, hence
$$S'=(v_1<\dots<v_{i-1}<v<\dots)$$
and therefore $S'<S$.

If our claim is not true, choose the minimal $w\in S'\ms \bar S$ with $w<v$.
Then $w\in C(\bar S\ms\set v)$,
hence either $w\in S$ (but then $w\in \bar S$, a contradiction) or $w\in C(S)$.
We have $m_{w}\notin  M_{S'_{<w}}$ as $w\in S'$ and $M\in U_{S'}$.
Note that if $u\in S$ and $u<w<v$, then $u\in\bar S\sbs S'$.
Therefore $M_{S_{<w}}\sbs M_{S'_{<w}}$,
hence $m_{w}\notin M_{S_{<w}}$, contradicting to the minimality of~ $v$.
\end{proof}

We arrive at the following result (\cf \cite[Theorem 7.7]{reineke_cohomology}).

\begin{theorem}
\label{th:cell-dec}
For any tree $S\in\cP(\bd)$, define the open set $V_S=\bigcup_{S'\le S}U_{S'}$.
Then $V_S=\bigsqcup_{S'\le S}Z_{S'}$.
In particular, 
$$\cN_\bd=\bigsqcup_{S\in\cP(\bd)}Z_S.$$
\end{theorem}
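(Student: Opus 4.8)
The plan is to prove the statement by induction on the total order on $\cP(\bd)$ introduced in \eqref{tree order}, using the two preceding lemmas as the engine. Concretely, I would show by induction on $S\in\cP(\bd)$ (with respect to $\le$) that $V_S=\bigcup_{S'\le S}U_{S'}$ equals the disjoint union $\bigsqcup_{S'\le S}Z_{S'}$; the final assertion $\cN_\bd=\bigsqcup_{S}Z_S$ then follows by taking $S$ to be the maximal tree in $\cP(\bd)$, since $\cN_\bd$ is covered by the sets $U_{S'}$ (Remark after Lemma \ref{lm:dim U_S}, via Corollary \ref{basis ext}). Note that $V_S$ is open because each $U_{S'}$ is open by Lemma \ref{lm:dim U_S}.

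First I would record disjointness: for $S'\ne S''$ in $\cP(\bd)$, Lemma \ref{intersect1} gives $Z_{S'}\cap Z_{S''}=\es$, so every union of cells appearing below is automatically disjoint, and it remains only to prove the set-theoretic equality $V_S=\bigcup_{S'\le S}Z_{S'}$. For the inclusion $\supseteq$, note $Z_{S'}\sbs U_{S'}\sbs V_S$ for every $S'\le S$, which is immediate from the definitions of $Z_{S'}$ and $V_S$. For the inclusion $\subseteq$, write $V_S=U_S\cup\bigcup_{S'<S}U_{S'}=U_S\cup V_{S^-}$, where $S^-$ is the predecessor of $S$ in the total order (or $V_S=U_{S_{\min}}$ in the base case $S=S_{\min}$, where $S_{\min}=\set*\cup\cdots$ is the minimal tree; there $U_{S_{\min}}=Z_{S_{\min}}$ since $C(S_{\min})$ consists of elements all of whose predecessors lie in $S_{\min}$, and the condition $m_v\in M_{S_{<v}}$ is vacuous-to-check in the right way — I would verify this small base case directly). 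By the inductive hypothesis, $V_{S^-}=\bigsqcup_{S'<S}Z_{S'}$. So it suffices to show $U_S\sbs Z_S\cup V_{S^-}=Z_S\cup\bigcup_{S'<S}U_{S'}$ — and this is exactly the content of Lemma \ref{lm2}. Combining, $V_S=U_S\cup V_{S^-}\sbs Z_S\cup\bigcup_{S'<S}U_{S'}\cup V_{S^-}=Z_S\cup V_{S^-}=\bigsqcup_{S'\le S}Z_{S'}$, and the reverse inclusion was already noted, giving equality.

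Finally, applying this to the maximal element $S_{\max}\in\cP(\bd)$ yields $\cN_\bd=V_{S_{\max}}=\bigsqcup_{S\in\cP(\bd)}Z_S$, using that $\bigcup_{S'\le S_{\max}}U_{S'}=\bigcup_{S\in\cP(\bd)}U_{S'}=\cN_\bd$. The only genuinely delicate points are: (i) making sure $\cP(\bd)$ really is a finite totally ordered set so the induction is well-founded — this is fine since $\cP(\bd)$ is finite (shown in the Path poset subsection) and $\le$ is easily checked to be a total order on trees of a fixed size $k=\n\bd+1$; and (ii) handling the base case of the induction cleanly. The heart of the argument is entirely outsourced to Lemmas \ref{intersect1} and \ref{lm2}, so I do not expect a real obstacle beyond careful bookkeeping of the indices in the telescoping union.
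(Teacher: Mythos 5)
Your proof is correct and follows exactly the same strategy as the paper: induction on the finite total order on $\cP(\bd)$, using Lemma \ref{intersect1} for disjointness and Lemma \ref{lm2} for the inductive step $U_S\sbs Z_S\cup V_{S^-}$, with the final claim obtained by taking the maximal tree. One small caveat: your proposed justification of the base case (that the condition $m_v\in M_{S_{<v}}$ is ``vacuous'' for the minimal tree because every predecessor of $v\in C(S_{\min})$ lies in $S_{\min}$) is not right as stated --- that property of $C(S)$ holds for every tree $S$, and the condition is genuinely a constraint in general; the clean reason that $U_{S_{\min}}=Z_{S_{\min}}$ is simply Lemma \ref{lm2} applied with the empty union $\bigcup_{S'<S_{\min}}U_{S'}=\es$, which you also implicitly use, so no separate base-case argument is needed.
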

\begin{proof}
Recall that $\cP(\bd)$ is a finite set.
We conclude from Lemma \ref{lm2} by induction
that $V_S\sbs\bigcup_{S'\le S}Z_S$.
This union is disjoint by Lemma \ref{intersect1}.
Moreover, for any $S'\le S$, we have $Z_{S'}\sbs U_{S'}\sbs V_S$. Therefore $V_S=\bigsqcup_{S'\le S}Z_{S'}$.
Taking the maximal tree $S\in\cP(\bd)$, we obtain the last statement.
\end{proof}

\begin{corollary}[Cell decomposition]
The moduli space $\cN_\bd$ admits a cell decomposition, meaning that it admits a filtration by open subvarieties
$$\es=V_0\sbs V_1\sbs\dots\sbs V_r=\cN_\bd$$
such that $V_k\ms V_{k-1}$ are affine spaces, for all $1\le k\le r$.
\end{corollary}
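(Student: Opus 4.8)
The plan is to read off the filtration directly from Theorem \ref{th:cell-dec}. Since $\cP(\bd)$ is a finite set, totally ordered by \eqref{tree order}, I would enumerate it as $S_1<S_2<\dots<S_r$ and set $V_0=\es$ and $V_k=V_{S_k}=\bigcup_{S'\le S_k}U_{S'}$ for $1\le k\le r$. Each $V_k$ is open in $\cN_\bd$, being a (finite) union of the open subsets $U_{S'}$ (openness by Lemma \ref{lm:dim U_S}); and if $S'\le S_k$ then the union defining $V_{S'}$ ranges over a subset of the one defining $V_{S_k}$, so $V_1\sbs V_2\sbs\dots\sbs V_r$. Taking $S_r$ to be the maximum of $\cP(\bd)$, the last assertion of Theorem \ref{th:cell-dec} gives $V_r=\cN_\bd$. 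Hence $\es=V_0\sbs V_1\sbs\dots\sbs V_r=\cN_\bd$ is a filtration of $\cN_\bd$ by open subvarieties.

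It then remains to identify the successive differences. By Theorem \ref{th:cell-dec} we have $V_{S_k}=\bigsqcup_{S'\le S_k}Z_{S'}$ and $V_{S_{k-1}}=\bigsqcup_{S'\le S_{k-1}}Z_{S'}=\bigsqcup_{S'<S_k}Z_{S'}$ (with $V_0=\es$ read as the empty union), so, subtracting, $V_k\ms V_{k-1}=Z_{S_k}$. By Lemma \ref{lm:dim Z_S}, $Z_{S_k}$ is an affine space — of dimension $\#\bigcup_{i\in I}\sets{(u,v)\in (S_k)_i\xx C(S_k)_i}{u<v}$, equivalently $d(S_k)$ as in \eqref{dim Z_S} — which is exactly the property demanded of $V_k\ms V_{k-1}$; moreover $Z_{S_k}$ is locally closed in $\cN_\bd$ by the same lemma, consistent with its being the difference of the open sets $V_k$ and $V_{k-1}$.

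I do not expect a genuine obstacle: all the substance is already in Theorem \ref{th:cell-dec} (and behind it in Lemmas \ref{intersect1} and \ref{lm2}), and what is left is purely organisational. The one point I would be careful to state explicitly is that the symbol $\bigsqcup$ in Theorem \ref{th:cell-dec} denotes an honest set-theoretic partition — this is the disjointness supplied by Lemma \ref{intersect1} — so that subtracting two consecutive steps of the filtration returns the single cell $Z_{S_k}$ rather than a union of cells. I would also note that, as with the order on $\cP$, the resulting cell decomposition depends on the chosen admissible order, but its mere existence does not.
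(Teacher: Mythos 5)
Your proof is correct and is essentially the paper's own argument: order $\cP(\bd)$ as $S_1<\dots<S_r$, set $V_k=V_{S_k}$, use Theorem \ref{th:cell-dec} to identify $V_k\ms V_{k-1}=Z_{S_k}$, and invoke Lemma \ref{lm:dim Z_S} to see that each $Z_{S_k}$ is an affine space. The extra remarks about openness, about the disjointness of the union, and about dependence on the admissible order are all sound and merely make explicit what the paper leaves implicit.
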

\begin{proof}
Let us order the trees in $\cP(\bd)$
as $S_1<\dots<S_r$.
Then $V_k=V_{S_k}=\bigcup_{i\le k}U_{S_i}
=\bigsqcup_{i\le k}Z_{S_i}$ are open and satisfy
$V_k\ms V_{k-1}=Z_{S_k}$.
\end{proof}

\subsection{Examples of cell closures}
\label{sec:closures}
\begin{example}
For any $S\in\cP(\bd)$, the subset 
$\bigsqcup_{S''\ge S}Z_{S''}\sbs\cN_\bd$ is closed.
Let us show that there exist trees $S'>S$ such that $d(S')>d(S)$.
This will imply, in particular, that $\bigsqcup_{S''\ge S}Z_{S''}$ is not equal to the closure $\bar Z_S$.
We consider the framed quiver $Q^\bw$
\begin{ctikzcd}
\infty\ar[rr,bend left,"e"]\ar[rr,bend right,"f"']&& 0
\ar[loop, out=30,in=90,looseness=5,"a"']
\ar[loop, out=-30,in=-90,looseness=5,"b"]
\end{ctikzcd}
with the order of arrows $e<f<a<b$ and the corresponding shortlex order on the set of paths ~$\cP$.
Let us consider the trees in $\cP(3)$ (we omit the root $*$)
$$
S=(e,f,bf),\qquad
S'=(e,ae,be)
$$
satisfying $S<S'$.
Then $d(S)=12$ and $d(S')=13$,
hence $Z_{S'}$ is not contained in $\bar Z_S$.
\end{example}

\begin{example}
Let us show that the closure of a cell is not necessary a disjoint union of cells.
Let us consider the framed quiver $Q^\bw$
$$\begin{tikzcd}
\infty\ar[r,"f"]& 0
\ar[loop, out=10,in=45,looseness=5,"a"']
\ar[loop, out=-10,in=-45,looseness=5,"b"]
\end{tikzcd}
$$
with the order of arrows $f<a<b$ and the corresponding shortlex order on the set of paths ~\cP.
We consider the trees in $\cP(4)$ (we omit the root $*$)
$$S=(f, af, bf, b^2f),\qquad
S'=(f, bf, abf, b^2f)$$
satisfying $S<S'$.
Their critical sets are
$$C(S)=(a^2f,baf,abf,ab^2f,b^3f),\qquad
C(S')=(af,a^2bf,babf,ab^2f,b^3f).$$
The corresponding cells have dimensions
$d(S)=d(S')=17$.
We will show that $\bar Z_S\cap Z_{S'}\ne\es$.
On the other hand, $Z_{S'}\not\sbs \bar Z_S$ for dimension reasons, hence $\bar Z_S$ is not a disjoint union of cells.

To find $\bar Z_S\cap Z_{S'}$, we first determine 
$\bar Z_S\cap U_{S'}$ and then intersect it with $Z_{S'}$.
The intersection $\bar Z_S\cap U_{S'}$ is equal to the closure of $Z_S\cap U_{S'}\ne\es$ in $U_{S'}$.
Let us denote the elements of $S'$ by $u_1<\dots<u_4$ and the elements of $C(S')$ by $v_1<\dots<v_5$.
Then the coordinates $(c_{ij})$ of $M\in U_{S'}$ satisfy $m_{v_j}=\sum_i c_{ij}m_{u_i}$.
For $M\in Z_{S}$, we require that
$(m_f,m_{af},m_{bf},m_{b^2f})$ are linearly independent
and that
$m_{a^2f},m_{baf},m_{abf}$ are contained in $V=\ang{m_f,m_{af},m_{bf}}$.
The first condition means that for
$$m_{af}
=c_{11}m_f+c_{21}m_{bf}+c_{31}m_{abf}+c_{41}m_{b^2f}$$
we have $c_{31}\ne0$.
Then condition $m_{abf}\in V$ means that $c_{41}=0$,
hence $V=\ang{m_f,m_{bf},m_{abf}}$.
We have
$$m_{a^2f}=am_{af}
=c_{11}m_{af}+c_{21}m_{abf}+c_{31}m_{a^2bf}
\in V+c_{31}c_{42}m_{b^2f},$$
$$m_{baf}=bm_{af}
=c_{11}m_{bf}+c_{21}m_{b^2f}+c_{31}m_{babf}
\in V+(c_{21}+c_{31}c_{43})m_{b^2f}.
$$
Therefore $Z_S\cap U_{S'}$ is given by $c_{31}\ne0$ and
the equations
$$c_{41}=c_{42}=c_{21}+c_{31}c_{43}=0$$
while $\bar Z_S\cap U_{S'}$ is given by the above equations.
The cell $Z_{S'}\sbs U_{S'}$ is given by the equations $c_{21}=c_{31}=c_{41}=0$.
We conclude that $\bar Z_S\cap Z_{S'}$ is given by the equations $c_{21}=c_{31}=c_{41}=c_{42}=0$.
In particular, it is non-empty.
\end{example}

\subsection{Degeneracy loci}
For any subset $S\sbs\cP$, we define the degeneracy locus
\begin{equation}\label{deg loc1}
D(S)\sbs\cN_\bd
\end{equation}
to be the set of representations $M\in\cN_\bd$ such that the vectors $(m_u)_{u\in S}$ in $M$ are linearly dependent.
For any tree $S\in\cP(\bd)$ and $v\in C(S)$, let
\begin{equation}\label{Sv}
S(v)=\sets{u\in S_{t(v)}}{u<v}\cup\set v.
\end{equation}
Define the degeneracy locus of the tree $S$ to be
\begin{equation}\label{tree deg locus}
D_S=\bigcap_{v\in C(S)}D(S(v)).
\end{equation}

\begin{lemma}\label{D_S inclusion}
We have $D_S\sbs\bigsqcup_{S'\ge S}Z_{S'}$
and
$D_S\cap U_S=Z_S$.
\end{lemma}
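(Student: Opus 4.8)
The plan is to prove the two statements separately, using the combinatorial structures already established.

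First I would show $D_S \cap U_S = Z_S$. This is essentially a reformulation. Recall from Lemma \ref{lm:dim U_S} that on $U_S$ every $m_v$ for $v \in C(S)_i$ can be written uniquely as $m_v = \sum_{u \in S_i} c_{u,v} m_u$, and from Lemma \ref{lm:dim Z_S} that $Z_S \subseteq U_S$ is cut out by $c_{u,v} = 0$ for all $u > v$. Now fix $v \in C(S)$ and consider $S(v) = \{u \in S_{t(v)} : u < v\} \cup \{v\}$. For $M \in U_S$, the vectors $(m_u)_{u \in S_{t(v)}}$ are linearly independent, so the family $(m_u)_{u \in S(v)}$ is linearly dependent if and only if $m_v \in \langle m_u : u \in S_{t(v)}, u<v\rangle = M_{S_{<v}}$, i.e.\ if and only if $c_{u,v} = 0$ for all $u > v$ in $S_{t(v)}$. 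Intersecting over all $v \in C(S)$, we get exactly the defining equations of $Z_S$, so $D_S \cap U_S = \bigcap_{v \in C(S)} (D(S(v)) \cap U_S) = Z_S$.

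Next I would prove $D_S \subseteq \bigsqcup_{S' \ge S} Z_{S'}$. By the cell decomposition (Theorem \ref{th:cell-dec}), $\cN_\bd = \bigsqcup_{S' \in \cP(\bd)} Z_{S'}$, so it suffices to show that $D_S \cap Z_{S'} = \es$ whenever $S' < S$. Since $Z_{S'} \subseteq U_{S'}$, it is enough to show $D_S \cap U_{S'} = \es$ for $S' < S$ — and in fact, once we know $D_S \cap U_S = Z_S$ (just proved), combined with the general principle that $D_S$ is closed and the $U_{S''}$ cover, the claim reduces to ruling out intersections with $U_{S'}$ for $S' < S$. Write $S = (v_1 < \dots < v_k)$ and $S' = (v'_1 < \dots < v'_k)$, and let $i$ be minimal with $v_i \ne v'_i$; then $v'_i < v_i$. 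As in the proof of Lemma \ref{intersect1}, one shows $v'_i \in C(S)$: indeed $v'_i \notin S$ (else $v'_i = v_j$ for $j < i$, forcing $v'_j = v_j < v'_i = v'_j$, absurd), and writing $v'_i = a v$ for an arrow $a$ we have $v \prec v'_i$ with $v \in S'$, so $v = v'_j = v_j \in S$ for some $j < i$, whence $v'_i = av \in C(S)$. Now observe that $S(v'_i) = \{u \in S_{t(v'_i)} : u < v'_i\} \cup \{v'_i\}$; every $u \in S$ with $u < v'_i < v_i$ satisfies $u = v_j = v'_j$ for some $j < i$, so $u \in S'$. Hence if $M \in D_S$, the tuple $(m_u)_{u \in S(v'_i)}$ is linearly dependent, so $m_{v'_i} \in \langle m_u : u \in S, u < v'_i\rangle \subseteq M_{S'_{< v'_i}}$, which contradicts $M \in U_{S'}$ (where the vectors $(m_u)_{u \in S'}$, in particular those indexed by $S'_{\le v'_i}$, are independent). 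Therefore $D_S \cap U_{S'} = \es$, and summing over $S' < S$ gives the inclusion.

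The main obstacle is bookkeeping in the second part: one must carefully track which paths of $S$ lie below $v'_i$ and confirm they also lie in $S'$, so that the linear dependence forced by $M \in D(S(v'_i))$ genuinely contradicts linear independence on $S'$. This is the same mechanism as in Lemma \ref{intersect1}, so I expect it to go through cleanly; the only subtlety is making sure we use the degeneracy condition at the \emph{right} critical element $v'_i$ (the first place $S$ and $S'$ diverge) rather than at an arbitrary one.
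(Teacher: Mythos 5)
Your proposal is correct and follows essentially the same route as the paper. The paper's proof is terse: for the first inclusion it reduces to showing $D_S\cap U_{S'}=\es$ for $S'<S$ (using $\bigsqcup_{S'<S}Z_{S'}=\bigcup_{S'<S}U_{S'}$, a consequence of Theorem~\ref{th:cell-dec}) and then remarks that this goes through exactly as Lemma~\ref{intersect1}; for the second part it observes directly that on $U_S$ the degeneracy of $(m_u)_{u\in S(v)}$ is equivalent to $m_v\in M_{S_{<v}}$. Your write-up spells out both steps in full, in particular carrying out the adaptation of the Lemma~\ref{intersect1} argument to the $D_S$ setting (where you correctly use that $S(v'_i)\setminus\{v'_i\}\sbs S'$ to ensure the dependency in $D(S(v'_i))$ genuinely forces $m_{v'_i}\in M_{S'_{<v'_i}}$); this is precisely the missing bookkeeping the paper leaves to the reader, and you handle the key subtlety — choosing the critical element $v'_i$ at the first divergence of $S$ and $S'$ — correctly.
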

\begin{proof}
We note that $\bigsqcup_{S'<S}Z_{S'}=\bigcup_{S'<S}U_{S'}$.
Therefore to show that $D_S\sbs\bigsqcup_{S'\ge S}Z_{S'}$ it is enough to show that $D_S\cap U_{S'}=\es$ for $S'<S$.
The proof of this statement goes through the same lines as the proof of Lemma \ref{intersect1}.


We have $Z_S\sbs D_S\cap U_S$ by definition.
If $M\in D_S\cap U_S$, then $m_v\in M_{S_{<v}}$, for all $v\in C(S)$, hence $M\in Z_S$.
Therefore $D_S\cap U_S=Z_S$.
\end{proof}

\begin{remark}
If $Q$ is a quiver with one vertex, then one can show that $D_S=\bigsqcup_{S'\ge S}Z_{S'}$
\cite[Lemma 2.2]{franzen_cohomology}.
We don't know if this statement is true in general.
\end{remark}

\subsection{Varieties having cell decomposition}
The following result is folklore.

\begin{theorem}\label{th:cell-dec-BM}
Assume that an algebraic variety $X$ admits a cell decomposition, meaning that it admits a filtration by open subvarieties
$$\es=V_0\sbs V_1\sbs\dots\sbs V_r=X$$
such that $Z_k=V_k\ms V_{k-1}$ are affine spaces, for all $1\le k\le r$.
Then
$$A_*(X)=\bop_{k}\bZ[\bar Z_k].$$
Moreover, $\HB_i(X,\bZ)=0$ for odd $i$ and the cycle map $A_i(X) \to \HB_{2i}(X,\bZ)$ is an isomorphism for $i\in\bZ$.
\end{theorem}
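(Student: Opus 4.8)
The plan is to argue by induction on the length $r$ of the filtration, establishing simultaneously that $\HB_i(X,\bZ)=0$ for $i$ odd, that $\HB_{2j}(X,\bZ)$ is free of rank equal to the number of cells $Z_k$ with $Z_k\cong\A^{j}$, that the classes $[\bar Z_k]$ form a free basis of $A_*(X)$, and that the cycle map $A_j(X)\to\HB_{2j}(X,\bZ)$ is an isomorphism. The base case $r\le 1$ reduces to $X=\es$ (all groups vanish) or $X\cong\A^{d_1}$, where the four assertions are explicit: $A_j(\A^n)=\bZ$ for $j=n$ and $0$ otherwise by homotopy invariance of Chow groups, $\HB_i(\A^n,\bZ)=\bZ$ for $i=2n$ and $0$ otherwise by Poincar\'e duality, and the cycle map carries the fundamental class of $\A^n$ to a generator of $\HB_{2n}$.

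For the inductive step I would set $U=V_{r-1}$, which is open in $X$ and inherits the truncated cell decomposition of length $r-1$, and $Z=Z_r=X\ms U$, which is closed in $X$ and isomorphic to $\A^{d_r}$. The tool is the pair of compatible localization sequences: the right-exact Chow sequence
\[A_j(Z)\xto{\io_*}A_j(X)\xto{j^*}A_j(U)\to 0\]
and the long exact Borel--Moore sequence $\cdots\to\HB_i(Z)\to\HB_i(X)\to\HB_i(U)\xto{\dd}\HB_{i-1}(Z)\to\cdots$, tied together by the cycle maps into a commutative ladder \ful. The key observation is a parity argument: by induction $\HB_i(U)=0$ for $i$ odd, while $\HB_{i-1}(Z)=\HB_{i-1}(\A^{d_r})$ vanishes unless $i$ is odd, so no connecting homomorphism $\dd$ can have both source and target nonzero; hence $\dd=0$ and the Borel--Moore sequence splits into short exact sequences $0\to\HB_i(Z)\to\HB_i(X)\to\HB_i(U)\to 0$. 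Read degree by degree, these give $\HB_i(X)=0$ for $i$ odd and $\HB_{2j}(X)$ free of the asserted rank.

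It remains to transfer this to Chow groups. In the resulting ladder, the vertical cycle maps over $Z\cong\A^{d_r}$ and (by induction) over $U$ are isomorphisms; since the composite $A_{d_r}(Z)\to A_{d_r}(X)\to\HB_{2d_r}(X)$ factors as an isomorphism followed by the injection $\HB_{2d_r}(Z)\emb\HB_{2d_r}(X)$, the map $\io_*$ is injective, the Chow sequence is short exact as well, and the five lemma forces $A_j(X)\to\HB_{2j}(X)$ to be an isomorphism; in particular $A_*(X)$ is free. To identify the basis, note that for $k<r$ the cell $Z_k$ lies in $U$, so $j^*[\bar Z_k]=[\bar Z_k\cap U]$ is precisely the basis vector of $A_*(U)$ produced by induction, while $\io_*[Z_r]=[\bar Z_r]$ and $j^*[\bar Z_r]=0$; surjectivity of $j^*$ and exactness then yield generation by $[\bar Z_1],\dots,[\bar Z_r]$, and any relation $\sum_k n_k[\bar Z_k]=0$ maps under $j^*$ to a relation among the $[\bar Z_k\cap U]$ with $k<r$, forcing $n_k=0$ there and then $n_r=0$ because $[\bar Z_r]$ has nonzero image in the torsion-free group $\HB_{2d_r}(X)$. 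I expect the only genuinely delicate point to be the compatibility of the two localization sequences through the cycle map \ful[Ch.~19], which underlies the parity vanishing of the connecting homomorphisms; granting that, the rest is a formal induction combined with the (co)homology of affine space and the five lemma.
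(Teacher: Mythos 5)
Your proof is correct and uses the same two tools as the paper: the compatible localization sequences in Chow groups and Borel--Moore homology linked by the cycle map, and the parity argument forcing the Borel--Moore connecting homomorphisms to vanish, followed by the five/snake lemma. The only organizational difference is the direction of the induction: you induct on the filtration length $r$ and remove the \emph{top closed} cell $Z_r = X \ms V_{r-1}$, applying the theorem itself to the open subvariety $U = V_{r-1}$ (which literally inherits a shorter cell decomposition), whereas the paper works with the decreasing chain of closed subsets $D_k = X\ms V_{k-1}$, removing the \emph{bottom open} cell $Z_k\sbs D_k$ at each step and inducting a slightly stronger auxiliary claim on each $D_k$. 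Both routes handle freeness and the identification of the $[\bar Z_k]$ basis in the same way, via the canonical splitting these classes provide; yours has the mild advantage that the inductive hypothesis is a verbatim reapplication of the statement.
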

\begin{proof}
We have $V_k=\bigsqcup_{i\le k}Z_i$.
Define closed subsets 
$D_k=\bigsqcup_{i\ge k}Z_i=X\ms V_{k-1}$.
Then $Z_k$ is open in $D_k$ and its complement is $D_{k+1}$.
We obtain a long exact sequence of BM homology
$$\dots
\to \HB_i(D_{k+1})\to \HB_i(D_k)\to \HB_i(Z_k)\to\dots.$$
Note that $\HB_i(Z_k)=0$ for odd $i$ and we obtain by induction (on decreasing $k$) that $\HB_i(D_k)=0$ for odd $i$.
This implies that we have a commutative diagram with exact rows
\begin{ctikzcd}
&A_i(D_{k+1})\rar\dar&A_i(D_k)\rar\dar&A_i(Z_k)\dar\rar&0\\
0\rar&\HB_{2i}(D_{k+1})\rar&\HB_{2i}(D_k)\rar
& \HB_{2i}(Z_k)\rar&0.
\end{ctikzcd}
The right vertical map is an isomorphism as $Z_k$ is an affine space.
We can assume by induction (on decreasing $k$) that the left vertical map is also an isomorphism.
Then by the snake lemma we conclude that the middle vertical arrow is also an isomorphism.
Therefore we have an exact sequence
$$0\to A_*(D_{k+1})\to A_*(D_k)\to A_*(Z_k)\to0.$$
Note that $A_*(Z_k)=\bZ[Z_k]$ and the above exact sequence splits canonically, where we map $[Z_k]\in A_*(Z_k)$ to $[\bar Z_k]\in A_*(D_k)$.
Therefore we obtain
$$A_*(D_k)\iso \bZ[\bar Z_k]\oplus A_*(D_{k+1}).$$
By induction this implies
$A_*(X)=A_*(D_1)\iso\bop_{k=1}^r\bZ[\bar Z_k].$
\end{proof}

\section{Multi-partitions}

\subsection{Trees and multi-partitions}
Let, as before, $\f\in\bN^I$ be a framing vector,
$Q^\f$ be the corresponding framed quiver, and $\cP=\cP^\f$
be the poset of paths in $Q^\f$ that start at $\infty$.
For $\bd\in\bN^I$, let $\cP(\bd)=\cP^\f(\bd)$ be the set of trees $S\sbs\cP$ with $\udim S=\bd$.
We define
\begin{equation}
c(\bd)=\f-\hi(\bd,-)\in\bZ^I,\qquad
c(\bd)_i=\f_i-\hi(\bd,e_i),\qquad i\in I,
\end{equation}
so that $\udim C(S)=c(\bd)$, for any $S\in\cP(\bd)$ 
(see Lemma \ref{lm:dim CS}).

We define $\cS(\bd)=\cS^\f(\bd)$ to be the set of multi-partitions
\begin{equation}\label{eq:multi-part1}
\la=(\tp\la i)_{i\in I},\qquad
\tp\la i
=(\tp\la i_1\ge\dots\ge\tp\la i_{\bd_i}\ge0),
\qquad i\in I,
\end{equation}
such that, 
for all $0\le \be<\bd$ (meaning that $\be\ne\bd$ and $0\le\be_i\le\bd_i$ for all $i\in I$),
there exists $i\in I$ satisfying
(we define $\tp\la i_{0}=+\infty$)
\begin{equation}\label{Phi}
\la^{(i)}_{\bd_i-\be_i}<c(\be)_i.
\end{equation}

We assume that $\cP$ is equipped with an admissible total order.
The following result was formulated in \cite{engel_smooth}, although some details of the proof were omitted.

\begin{theorem}
\label{bijection}
There is a bijection $\cP(\bd)\to\cS(\bd)$ that sends a tree $S\in\cP(\bd)$ with 
\begin{equation}
S_i=(u_{i,1}<\dots<u_{i,\bd_i}),\qquad i\in I,
\end{equation}
to the multi-partition $\la$ with
\begin{equation}\label{la from S}
\la^{(i)}_{\bd_i-k}=\#\sets{v\in C(S)_i}{v<u_{i,k+1}},
\qquad i\in I,\,0\le k<\bd_i.
\end{equation}
\end{theorem}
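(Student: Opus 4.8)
The strategy is to show that the assignment $S \mapsto \la$ defined by \eqref{la from S} is well-defined (lands in $\cS(\bd)$) and bijective, by exhibiting an explicit inverse built from the combinatorics of the tree of paths.

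\textbf{Step 1: The map lands in $\cS(\bd)$.}
Fix $S \in \cP(\bd)$ and define $\la$ by \eqref{la from S}. First I would check $\la$ is a genuine multi-partition: since $u_{i,1} < u_{i,2} < \dots$ along $S_i$, the sets $\{v \in C(S)_i : v < u_{i,k+1}\}$ are nested increasing in $k$, so $\la^{(i)}_{\bd_i - k}$ is non-increasing in $\bd_i - k$, i.e. $\la^{(i)}_1 \ge \dots \ge \la^{(i)}_{\bd_i} \ge 0$. Next I would verify condition \eqref{Phi}. Given $0 \le \be < \bd$, the lower set $S' = \{u \in S : u \text{ has a predecessor-path fitting } \be\}$ — more precisely, one wants the lower set of $S$ generated by the first $\be_i$ elements of each $S_i$ — need not itself be a tree with $\udim = \be$, so the argument must be done carefully. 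The key point: pick $i$ for which the ``next'' element $u_{i,\be_i+1}$ of $S_i$ is minimal in an appropriate sense; for this $i$, the element $u_{i,\be_i+1}$ lies in the critical set of the truncated tree $\bar S = \bigcup_j (u_{j,1}<\dots<u_{j,\be_j})$, and counting elements of $C(\bar S)_i$ below it — which equals $\la^{(i)}_{\bd_i - \be_i}$ by \eqref{la from S} together with $\bar S_{<u_{i,\be_i+1}} = S_{<u_{i,\be_i+1}}$ — is strictly less than $\#C(\bar S)_i = c(\be)_i$ (Lemma \ref{lm:dim CS} applied to $\bar S$), because $u_{i,\be_i+1}$ itself is in $C(\bar S)_i$ but not counted.

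\textbf{Step 2: Injectivity and the inverse.}
I would reconstruct $S$ from $\la$ by a depth-first/greedy traversal of $\cP$ in the admissible order. Process paths of $\cP$ in increasing order; maintain a growing tree $S$ and its critical set; a path $v$ with $t(v) = i$ is added to $S$ exactly when the number of elements of $C(S_{\text{current}})_i$ strictly below $v$ is prescribed to be ``not yet saturated'' — concretely, $v = u_{i,k+1}$ is declared to be in $S$ iff $\#\{w \in C(S_{<v}): t(w)=i,\ w < v\} = \la^{(i)}_{\bd_i - k}$ for the current count $k$ of vertices of colour $i$ already placed. Because at each stage $S_{<v} = \bar S$ is a genuine tree (lower set) and $\#C(\bar S)_i = c(\udim \bar S)_i$ is determined, condition \eqref{Phi} for $\be = \udim\bar S$ guarantees that \emph{some} colour $i$ can still accept a vertex before the critical set in that colour is exhausted, so the process never stalls before reaching $\udim S = \bd$; and \eqref{Phi} ranging over all $\be < \bd$ forces the process to terminate with exactly dimension $\bd$. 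This produces a well-defined $S \in \cP(\bd)$, and by construction \eqref{la from S} recovers the original $\la$, giving surjectivity; the greedy rule shows two trees mapping to the same $\la$ coincide step by step, giving injectivity.

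\textbf{Main obstacle.}
The delicate part is Step 1 (well-definedness), specifically showing that for \emph{every} $\be < \bd$ the inequality \eqref{Phi} holds for \emph{some} $i$: the truncation $\bar S$ of $S$ to ``the first $\be_i$ elements of each colour'' is a lower set only for suitably compatible $\be$, and one must argue that among all $i$ the correct choice is the one minimizing the next unused path $u_{i,\be_i+1}$, and that for this $i$ the identity $\bar S_{<u_{i,\be_i+1}} = S_{<u_{i,\be_i+1}}$ holds — this uses admissibility of the order together with the fact that any $u \in S$ below $u_{i,\be_i+1}$ already lies in $\bar S$, which in turn relies on the minimality choice. Matching this combinatorial bookkeeping precisely with Lemma \ref{lm:dim CS} (so that the counts line up to give a \emph{strict} inequality with $c(\be)_i$) is where the real work lies; the bijectivity in Step 2 is then a formal consequence of the greedy characterization.
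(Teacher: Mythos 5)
Your two-step plan (well-definedness, then a greedy inverse) mirrors the paper's structure, and your Step~2 is essentially the paper's construction; the issue is that your Step~1 is left at a sketch precisely where you correctly locate the difficulty, and your intended route runs into a definitional snag. You want to apply Lemma~\ref{lm:dim CS} to a truncation $\bar S$ of $S$ having $\udim \bar S = \be$, but ``the first $\be_j$ elements of each $S_j$'' is in general \emph{not} a lower set, so it is not a tree and the lemma does not apply to it. The correct object is $T := \sets{u\in S}{u < u_{i,\be_i+1}}$, where $i$ minimizes $u_{i,\be_i+1}$ over all $i$ with $\be_i < \bd_i$: this \emph{is} a lower set (admissibility of the order guarantees $u'\prec u$ implies $u' < u$), and one checks $T_i = \{u_{i,1},\dots,u_{i,\be_i}\}$, so $\ga_i = \be_i$ with $\ga := \udim T$; but for $j\ne i$ one only gets $\ga_j\le\be_j$, not equality. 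To rescue the argument you need one more observation: from $\ga_i=\be_i$ and $\ga\le\be$ componentwise, and the explicit formula $c(\cdot)_i = \f_i - (\cdot)_i + \sum_{a:j\to i}(\cdot)_j$, one gets $c(\ga)_i\le c(\be)_i$. Then $\la^{(i)}_{\bd_i-\be_i} = \#\sets{v\in C(S)_i}{v<u_{i,\be_i+1}} = \#\sets{v\in C(T)_i}{v<u_{i,\be_i+1}}\le\# C(T)_i - 1 = c(\ga)_i - 1 < c(\be)_i$, where the inequality is strict because $u_{i,\be_i+1}\in C(T)_i$ itself is excluded from the count. With this addition your Step~1 becomes correct.

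The paper avoids the truncated-tree subtlety altogether: it argues by contradiction, fixes the same $i$ minimizing $u_{i,\be_i+1}$, and then bounds $\#\sets{v\in C(S)_i}{v<u_{i,\be_i+1}}$ by writing each $v\in S_i\cup C(S)_i$ with $v\le u_{i,\be_i+1}$ as $v=a\bar v$ with $\bar v\in S_j$ and directly counting: at most $\be_j$ choices of $\bar v$ at each $j\in I$ and at most $\f_i$ coming from $j=\infty$, giving the bound $\f_i+\sum_j a_{ji}\be_j - \be_i - 1 = c(\be)_i - 1$. This is a more elementary count that never invokes Lemma~\ref{lm:dim CS}. Your route, once repaired as above, is a legitimate alternative that trades a direct count for two uses of the already-established formula $\#C(T)_i = c(\udim T)_i$ and a monotonicity check on $c(\cdot)_i$; neither is clearly shorter, but the paper's version is more self-contained at this point of the argument.

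Your Step~2 (greedy reconstruction by increasing order, accept $v$ when the prescribed critical-count is met) is the same idea as the paper's inductive construction; the paper in addition carefully tracks the two invariants \eqref{cond1}--\eqref{cond2} through the induction to justify that the greedy process is well-defined, never stalls, and is forced, which is exactly the bookkeeping you would need to write out for injectivity and surjectivity.
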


\begin{remark}
Given a tree $S\in\cP(\bd)$ and $v\in C(S)_i$,
we define \eqref{dim Z_S}
\begin{equation}
k_v=\#\sets{u\in S_{i}}{u<v}\le \bd_i.
\end{equation}
Then $k_v\le k$ if and only if $v<u_{i,k+1}$.
Therefore 
\begin{equation}
\la^{(i)}_{\bd_i-k}=\#\sets{v\in C(S)_i}{k_v\le k},\qquad
i\in I,\, 0\le k<\bd_i.
\end{equation}
\end{remark}

\begin{proof}[\textit{Proof of Theorem \ref{bijection}}]
First, let us show
that the multi-partition \la defined above satisfies
the required condition~\eqref{Phi}.
For convenience, we define 
(using $\tp\la i_0=+\infty$)
\begin{equation}\label{m}
\tp mi_k=\tp\la i_{\bd_i-k},\qquad 0\le k\le \bd_i.
\end{equation}
Then condition \eqref{Phi} on $\la$ 
means that for all $0\le \be<\bd$, there exists $i\in I$ such that
\begin{equation*}
\tp m i_{\be_i}<c(\be)_i.
\end{equation*}
Assume that there exists $0\le\be<\bd$ such that 
$\tp mi_{\be_i}\ge c(\be)_i$ for all $i\in I$.
This means that
\begin{equation}\label{assum1}
\#\sets{v\in C(S)_i}{v<u_{i,\be_i+1}}\ge c(\be)_i,
\end{equation}
whenever $\be_i<\bd_i$.
Let $i\in I$ be such that $\be_i<\bd_i$ and $u_{i,\be_i+1}$ is minimal.
We have
$$
\#\sets{v\in C(S)_i}{v<u_{i,\be_i+1}}
=\#\sets{v\in S_i\cup C(S)_i}{v\le u_{i,\be_i+1}}
-\be_i-1
$$ 
Every $v\in S_i\cup C(S)_i$ satisfying $v\le u_{i,\be_i+1}$
can be written in the form
$v=a\bar v$ for some 
$\bar v\in S_j$, $j\in I\cup\set\infty$, and some arrow $a:j\to i$ in $Q^\f$.

If $j\in I$, then $\bar v<v\le u_{i,\be_i+1}\le u_{j,\be_j+1}$, whenever $\be_j<\bd_j$.
The number of such $\bar v\in S_j$ is bounded by $\be_j$ (this also includes the case $\be_j=\bd_j$).
Therefore the number of $v$ of the required form
is bounded by $\sum_j a_{ji}\be_j$, where $a_{ji}$ is the number of arrows $j\to i$ in $Q^\f$.
If $j=\infty$, then $\bar v=e_\infty$ and the number of $v$ of the required form is bounded by $\f_i$.
We conclude that
$$\#\sets{v\in C(S)_i}{v<u_{i,\be_i+1}}
\le \f_i+\sum_j a_{ji}\be_j-\be_i-1
=\f_i-\hi(\be,e_i)-1<c(\be)_i$$
and this contradicts assumption \eqref{assum1}.

Now let us construct the inverse map $\cS(\bd)\to\cP(\bd)$.
Given a multi-partition $\la\in\cS(\bd)$, we define the numbers $\tp mi_k$ as in \eqref{m}.
We need to construct a tree 
$S=(u_0<u_1<\dots)$
in $\cP(\bd)$ with
\begin{equation*}
S_i=(u_{i,1}<\dots<u_{i,\bd_i}),\qquad i\in I,
\end{equation*}
such that
$$\#\sets{v\in C(S)_i}{v<u_{i,k+1}}=\tp m i_k,
\qquad 0\le k<\bd_i.$$
We construct this tree inductively as follows.
We set $u_0=e_\infty$.
Assume that a tree $S'=(u_0<\dots<u_{r})$
is constructed
and satisfies the conditions
\begin{equation}\label{cond1}
\#\sets{v\in C(S')_i}{v<u_{i,k+1}}=\tp m i_{k},
\qquad i\in I,\, 0\le k<\be_i.
\end{equation}
\begin{equation}\label{cond2}
\#\sets{v\in C(S')_i}{v<u}\le \tp mi_{\be_i}\qquad i\in I,\,u\in S'.
\end{equation}
where $\be=\udim S'\le\bd$ and
$$S'_i=(u_{i,1}<\dots<u_{i,\be_i}),\qquad i\in I.$$
Assuming that $\be<\bd$, we will construct a tree $S''=S'\cup\set{u_{r+1}}$ satisfying the same conditions.

By the assumption on \la,
there exists $i\in I$ such that
$$\tp m i_{\be_i}<c(\be)_i.$$
We order the elements of $C(S')_i$ and denote the $\tp m i_{\be_i}+1$-st element by $v_{i}$.
We claim that $v_i$ is the maximal element in 
$S'\cup\set {v_i}$.
Indeed, we have 
$$\#\sets{v\in C(S')_i}{v\le v_i}=\tp mi_{\be_i}+1$$
while by condition \eqref{cond2}
$$\#\sets{v\in C(S')_i}{v\le u}\le \tp mi_{\be_i},\qquad u\in S'.$$
This implies that $u<v_i$ for all $u\in S'$.

Let $j\in I$ be such that $v_j\in C(S')_j$ is minimal
and let $S''=S'\cup\set{v_j}$.
Then
$$C(S'')=C(S')\ms\set{v_j}\cup\ch(v_j),$$
where $\ch(v)=\min_{\preceq}(\cP_{\succ v})$ denotes the set of children of $v\in\cP$.
As $v_j$ is the maximal element in $S''$, we have $S''_i=S'_i$ for $i\ne j$ and
$$S''_j=(u_{j,1}<\dots<u_{j,\be_j}<v_j=:u_{j,\be_j+1}).$$

We will show that conditions \eqref{cond1} and \eqref{cond2} are satisfied for the tree $S''$.
For $i\in I$ and $0\le k<\be_i$, we have
\begin{equation*}
\#\sets{v\in C(S'')_i}{v<u_{i,k+1}}
=\#\sets{v\in C(S')_i}{v<u_{i,k+1}}
=\tp m i_{k}
\end{equation*}
by condition \eqref{cond1} for $S'$.
On the other hand
$$\#\sets{v\in C(S'')_j}{v<v_j}
=\#\sets{v\in C(S')_j}{v<v_j}
=\tp mj_{\be_j+1}.$$
Therefore condition \eqref{cond1} for the tree $S''$ is satisfied.
To prove condition \eqref{cond2} for $S''$ it is enough to consider the maximal element $u=v_j$ of the tree.

If $i\in I$ is such that $\tp mi_{\be_i}<c(\be)_i$, then 
$v_j\le v_i$ by the minimality of $v_j$, hence
$$\#\sets{v\in C(S')_i}{v<v_j}\le \tp mi_{\be_i}.$$
If $i\in I$ is such that $\tp mi_{\be_i}\ge c(\be)_i$, then we also obtain
$$\#\sets{v\in C(S')_i}{v<v_j}\le 
\# C(S')_i=c(\be)_i\le\tp mi_{\be_i}.$$
We conclude that for all $i\in I$
$$\#\sets{v\in C(S'')_i}{v<v_j}
=\#\sets{v\in C(S')_i}{v<v_j}\le \tp mi_{\be_i}$$
which implies condition \eqref{cond2} for the tree $S''$.
\end{proof}

\subsection{Total order on the set of multi-partitions}
In the previous section we constructed a bijection $\cP(\bd)\iso\cS(\bd)$ which depends on an admissible order on $\cP$.
On the other hand, 
in ~\S\ref{sec:cell dec} we constructed a total order on the set of trees $\cP(\bd)$.
This implies that we have the induced total order on the set of multi-partitions $\cS(\bd)$, which a priori may depend on an admissible total order on \cP.
In this section we will show that, for a quiver $Q$ having only one vertex, the induced total order on $\cS(\bd)$ is independent of an admissible order on \cP.

Let us assume that $Q$ has one vertex.
Recall that,
for $d\in\bN\iso\bN^I$ and a tree 
$S=(*<u_1<\dots<u_d)$ in $\cP(d)$,
the corresponding partition $\la=(\la_1,\dots,\la_d)\in\cS(d)$ 
is defined by \eqref{la from S}
\begin{equation}
\la_{d-k}=\#\sets{v\in C(S)}{v<u_{k+1}},\qquad 0\le k<d.
\end{equation}

\begin{lemma}\label{lm:part-order}
Let $S,S'\in\cP(d)$ and $\la,\mu\in\cS(d)$ be the corresponding partitions.
Then $S<S'$ if and only if $\la_k<\mu_k$ for the maximal $1\le k\le d$ with $\la_k\ne\mu_k$.
In particular, the induced total order on $\cS(d)$ is independent of the choice of an admissible order on \cP.
\end{lemma}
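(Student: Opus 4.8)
The plan is to prove something stronger than stated: that the bijection $\cP(d)\to\cS(d)$ of Theorem~\ref{bijection} carries the tree order of \S\ref{sec:cell dec} to the \emph{colexicographic} order on $\cS(d)$, i.e. the order comparing two partitions $\la,\mu$ at the \emph{largest} index $k$ with $\la_k\ne\mu_k$ and declaring $\la<\mu$ when $\la_k<\mu_k$ there. Since the colexicographic order refers to no choice of total order on $\cP$, the independence assertion is then immediate, and this is precisely the criterion in the lemma.

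So let $S=(*<u_1<\dots<u_d)$ and $S'=(*<u'_1<\dots<u'_d)$ be distinct trees in $\cP(d)$, with associated partitions $\la,\mu$; since the map is a bijection, $\la\ne\mu$. Let $j$ be minimal with $u_j\ne u'_j$, so $1\le j\le d$, and by the very definition of the tree order $S<S'$ iff $u_j<u'_j$. I would first record two easy facts. The set $\bar S=\set{*,u_1,\dots,u_{j-1}}=\set{*,u'_1,\dots,u'_{j-1}}$ is a subtree of $\cP$: the $\preceq$-predecessors of each $u_l$ ($l<j$) lie in $S$, which is a lower set, and precede $u_l$ in the admissible order, hence already lie in $\bar S$. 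Moreover $u_j,u'_j\in C(\bar S)$, because all their predecessors lie in $\bar S$.

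The key observation is a freezing statement. Build $S$ out of $\bar S$ by adjoining $u_j<u_{j+1}<\dots<u_d$ one at a time; at the $m$-th step the element $u_m$ belongs to the current critical set of $\set{*,u_1,\dots,u_{m-1}}$ (its predecessors are among $*,u_1,\dots,u_{m-1}$), and the critical set is updated by removing $u_m$ and inserting $\ch(u_m)$. Every element that ever enters or leaves the critical set in this process is $\ge u_j$ in the admissible order: $u_m\ge u_j$ since the $u_l$ are increasing, and each child of $u_m$ is $\succ u_m$, hence $>u_m\ge u_j$. Therefore $\sets{v\in C(S)}{v<u_j}=\sets{v\in C(\bar S)}{v<u_j}$, and the same holds for $S'$. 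Combining this with formula~\eqref{la from S} and with $u_l=u'_l$ for $l<j$: for every $l\le j-2$ we have $u_{l+1}<u_j$, so $\la_{d-l}=\#\sets{v\in C(\bar S)}{v<u_{l+1}}=\mu_{d-l}$; thus $\la$ and $\mu$ agree at every index $>d-j+1$. At the index $d-j+1$ (coming from $l=j-1$) we get $\la_{d-j+1}=\#\sets{v\in C(\bar S)}{v<u_j}$ and $\mu_{d-j+1}=\#\sets{v\in C(\bar S)}{v<u'_j}$. Since $u_j\ne u'_j$ both lie in $C(\bar S)$, if $u_j<u'_j$ then $\sets{v\in C(\bar S)}{v<u_j}\subsetneq\sets{v\in C(\bar S)}{v<u'_j}$ (proper because $u_j$ belongs only to the right-hand set), so $\la_{d-j+1}<\mu_{d-j+1}$, and symmetrically if $u'_j<u_j$; in particular $\la_{d-j+1}\ne\mu_{d-j+1}$. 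Hence $d-j+1$ is exactly the largest index at which $\la$ and $\mu$ differ, and the chain of equivalences $S<S'\iff u_j<u'_j\iff\la_{d-j+1}<\mu_{d-j+1}$ is precisely the claimed criterion. The only point requiring care is the freezing statement — one must be certain that, throughout the construction of $S$ from $\bar S$, nothing below $u_j$ is ever added to or deleted from the critical set; once that is granted, the rest is a direct bookkeeping with~\eqref{la from S}.
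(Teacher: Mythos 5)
Your proof is correct and, at bottom, rests on the same key fact the paper uses — that $u_{k+1}$ is the $(m_k+1)$-st element of $C(\set{*,u_1,\dots,u_k})$, i.e.\ the $(k+1)$-st tree element is determined by the $k$-th partition entry $m_k=\la_{d-k}$ together with the common initial segment of the tree. Where the paper extracts this from the inverse-map construction in the proof of Theorem~\ref{bijection}, you re-derive it self-containedly from the forward formula~\eqref{la from S} via the ``freezing'' observation that passing from $\bar S$ to $S$ never adds or removes a critical element below $u_j$ — a clean repackaging of the same content.
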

\begin{proof}
It is enough to show that if partitions $\la,\mu$ satisfy the above property, then $S<S'$.
Let $m_k=\la_{d-k}$ and $m'_k=\mu_{d-k}$ for $0\le k<d$.
By the proof of Theorem \ref{bijection} the tree $S=(*<u_1<\dots<u_d)$ has the property that $u_{k+1}$ is the $(m_k+1)$-st element in $C(\bar S)$, where $\bar S$ is the tree  $\bar S=(*<u_1<\dots<u_k)$.
Similarly for the tree $S'=(*<u'_1<\dots<u'_d)$.

By assumption, we have $m_{k+1}<m'_{k+1}$ for the minimal $k$ with $m_{k+1}\ne m'_{k+1}$.
This implies that $\bar S=(*<u_1<\dots<u_{k})=(*<u'_1<\dots<u'_k)$
and $u_{k+1}<u'_{k+1}$.
Therefore $S<S'$.
\end{proof}

\subsection{Cells parametrized by multi-partitions}
Theorem \ref{bijection} implies that we can parametrize the cells $Z_S\sbs\cN_\bd$, for $S\in\cP(\bd)$, by multi-partitions.
Note that the number of $v\in C(S)_i$ with 
$$k_v=\#\sets{u\in S_i}{u<v}=\bd_i-k$$
is equal to $\la^{(i)}_k-\la^{(i)}_{k+1}$ for $k\ge0$,
where we use now 
$\la^{(i)}_0=\# C(S)_i=\f_i-\hi(\bd,i)$.
Therefore, the dimension of the cell $Z_S$ is
\begin{equation}\label{d(la)}
\dim Z_S=\sum_{v\in C(S)}k_v
=\sum_{i\in I}\sum_{k\ge0}(\bd_i-k)(\la^{(i)}_k-\la^{(i)}_{k+1})
=\f\cdot\bd-\hi(\bd,\bd)-\n\la,
\end{equation}
where $\n\la=\sum_{i\in I}\n{\la^{(i)}}
=\sum_{i\in I}\sum_{k\ge1}{\la^{(i)}_k}$.
This implies that the motivic class of $\cN_\bd$ can be written in the form
\cite[Theorem 6.2]{engel_smooth}
\begin{equation}
[\cN_\bd]
=\bL^{\f\cdot\bd-\hi(\bd,\bd)}
\sum_{\la\in\cS(\bd)}\bL^{-\n\la}.
\end{equation}

It follows from Theorem \ref{bijection} that,
for an admissible total order on $\cP$,
we have a bijection $\cS(\bd)\to\cP(\bd)$, $\la\mto S_\la$.
We will see later that the class $[\bar Z_{S_\la}]\in A_*(\cN_\bd)$ generally depends on the choice of an admissible order on \cP.



\section{Grassmannians and non-commutative Hilbert schemes}
Let $0\le d\le w$ be two integers, $r=w-d$ and $W=\bC^w$.
We define the Grassmannian
\begin{equation}
\Gr^d(W)=\Gr_r(W)=\Gr(r,w)
\end{equation}
to be the algebraic variety parameterizing
surjective linear maps $M:W\to\bC^d$ (up to the action of $\GL_d(\bC)$) or, equivalently,
subspaces $U=\ker(M)\sbs W$ of dimension $r$.

We can interpret it as a non-commutative Hilbert scheme as follows.
Let $Q$ be the quiver having one vertex $0$ and no arrows,
and let $Q^w$ be the framed quiver obtained from $Q$ by adding one vertex $\infty$ and $w$ arrows $\infty\to 0$.
The non-commutative Hilbert scheme $\cN_d=\cN_{d,w}$ parametrizes representations $M$ of $Q^w$ having dimension vector $(d,1)$ (with the coordinate $1$ at the vertex~$\infty$), generated by $M_\infty$.
Such representations can be identified with surjective linear maps $M:\bC^w\to\bC^d$ 
(up to the action of $\GL_d(\bC)$), hence
\begin{equation}
\cN_{d}=\Gr^d(W)=\Gr_{r}(W).
\end{equation}
This is a smooth projective variety of dimension $d(w-d)$.

In the previous sections we established a cell decomposition of $\cN_d$ parametrized by trees or partitions.
On the other hand, Grassmannians possess a decomposition by Schubert cells parametrized by partitions.
In this section we will establish a precise relationship between these decompositions.
We will also discuss if the properties satisfied by cell decompositions of Grassmannians can be generalized to other quivers.

\subsection{Trees and cells}
Recall that the framed quiver $Q^w$ consists of two vertices $\infty,\, 0$ and $w$ arrows $\infty\to 0$.
Let us define $[1,w]=\set{1,\dots,w}$.
Then
\begin{enumerate}
\item 
The path poset $\cP$ can be identified with the set $\set{*}\cup[1,w]$.
\item
A subtree $S\in\cP(d)$ can be identified with a subset $S_0\sbs[1,w]$ having $d$ elements.
\item
The critical set $C(S)$ can be identified with $S'=[1,w]\ms S_0$ having $r=w-d$ elements.
\end{enumerate}

In what follows we will write $S$ for $S_0\sbs[1,w]$ and still call it a tree.
We order the elements of $\cP$ as
$$*<1<\dots<w.$$
A framed representation $M\in\cN_d$
can be identified with a surjective linear map $M:\bC^w\to\bC^d$.
We define 
$$m_i=M(e_i)\in\bC^d,\qquad i\in[1,w].$$
Then the cell $Z_S$, for $S\sbs[1,w]$, consists of such $M$ that
\begin{enumerate}
\item $(m_i)_{i\in S}$ are linearly independent.
\item $m_j\in\angs{m_i}{i\in S_{<j}}$ for all $j\in S'=[1,w]\ms S$.
\end{enumerate}

Earlier we defined the set $\cS(d)=\cS^w(d)$ to be the set of partitions \eqref{eq:multi-part1}
\begin{equation}\label{part2}
\mu=(\mu_1,\dots,\mu_d),\qquad \mu_1\le w-d.
\end{equation}
In Theorem \ref{bijection} we described a bijection
$\cP(d)\to\cS(d)$.
For every tree
$$S=(u_1<\dots<u_d)\sbs[1,w]$$
we define the corresponding partition $\mu\in\cS(d)$ by
\begin{equation}\label{mu}
\mu_{d-k}=\#\sets{j\in [1,w]\ms S}{j<u_{k+1}},
\qquad 0\le k<d.
\end{equation}
We denote the cell $Z_S$ also by $Z_\mu$.

%

\subsection{Schubert cells}
As before, we consider $0\le d\le w$ and $r=w-d$.
Consider a full flag
$$0=F_0\sbs F_1\sbs\dots\sbs F_w=W=\bC^w.$$
Given a partition 
\begin{equation}\label{part3}
\la=(\la_1,\dots,\la_{r}),\qquad \la_1\le \la_0:=d,
\end{equation}
we define 
the Schubert variety \fulb[\S9.4]
\begin{equation}
\Om_\la
=\sets{U\in\Gr^d(W)}{\dim(U\cap F_{d+i-\la_i})\ge i,\,1\le i\le r}
\end{equation}
and the Schubert cell 
\begin{equation}
\Om_\la^\circ
=\sets{U\in\Gr^d(W)}{\dim(U\cap F_k)=i\text{ for }
d+i-\la_i\le k\le d+i-\la_{i+1},\,0\le i\le r}.
\end{equation}
It is known that
\begin{enumerate}
\item  $\Om_\la^\circ$ is an affine space.
\item $\Gr^d(W)=\bigsqcup_\la\Om_\la^\circ$.
\item $\Om_\la$ is the closure of $\Om_\la^\circ$ in $\Gr_r(W)$.
\item $\Om_\la=\bigsqcup_{\nu\supseteq\la}\Om_\nu^\circ$,
where $\nu\supseteq\la$ if $\nu_i\ge\la_i$ for all $i$.
\end{enumerate}

\begin{proposition}\label{prop:cells}
Assume that the flag on $\bC^w$ is given by $F_i=\ang{e_1,\dots,e_i}$, for $1\le i\le w$.
Then, for any partition \la as in \eqref{part3}, we have
\begin{equation}
\Om_\la^\circ= Z_S,
\end{equation}
where $S=[1,w]\ms S'$ with
\begin{equation}
S'=\set{v_1,\dots,v_r},\qquad
v_i=d+i-\la_i,\qquad 1\le i\le r.
\end{equation}
\end{proposition}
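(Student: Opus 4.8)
The plan is to unwind both sides of the claimed equality into explicit conditions on a surjection $M:\bC^w\to\bC^d$ and check they coincide. On the Schubert side, with the standard flag $F_k=\ang{e_1,\dots,e_k}$, the subspace $U=\ker M$ lies in $\Om_\la^\circ$ iff $\dim(U\cap F_k)=i$ exactly in the range $d+i-\la_i\le k\le d+i-\la_{i+1}$. Passing to the quotient, $\dim(U\cap F_k)=i$ is equivalent to $\dim M(F_k)=\rk(m_1,\dots,m_k)=k-i$. So membership in $\Om_\la^\circ$ says that the rank of the first $k$ vectors $m_1,\dots,m_k$ jumps by one precisely at $k=v_i=d+i-\la_i$ for $i=1,\dots,r$, and increases by one at every other step; equivalently, the set of indices $j$ at which $m_j\notin\ang{m_1,\dots,m_{j-1}}$ is exactly $[1,w]\ms S'=S$. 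First I would record this reformulation carefully, keeping track of the boundary cases $i=0$ (where $F_{d-\la_1}$ must already map onto a space of dimension $d-\la_1$, i.e. the first $d-\la_1$ vectors are independent — note $v_1=d+1-\la_1$, so indices $1,\dots,d-\la_1$ lie in $S$) and $i=r$ (where from $k=w-\la_r$ onward the rank is already $d$).

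Next I would unwind the cell $Z_S$ side. By definition $Z_S$ consists of $M$ such that $(m_u)_{u\in S}$ is a basis of $\bC^d$ and, for every $j\in S'=C(S)$, one has $m_j\in\ang{m_i:i\in S,\ i<j}$. Since $\n S=d$ and $M$ is surjective, the first condition says $(m_i)_{i\in S}$ is a basis; the second says each $m_j$ with $j\in S'$ depends on the strictly earlier basis vectors. Together these are exactly: for each index $j\in[1,w]$, $m_j\notin\ang{m_i:i<j}$ iff $j\in S$. This is visibly the same condition extracted from $\Om_\la^\circ$ above, once I verify that $S'=\{v_1<\dots<v_r\}$ is indeed strictly increasing (immediate since $\la_i\ge\la_{i+1}$ forces $v_{i+1}-v_i=1+\la_i-\la_{i+1}\ge1$) and that $S=[1,w]\ms S'$ has exactly $d$ elements.

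It then remains to match the two descriptions index by index: membership of $j$ in the "rank-jump set" is a purely linear-algebraic condition on $M$, and both $\Om_\la^\circ$ and $Z_S$ are precisely the locus where this set equals $S$. I would also note for bookkeeping that the translation between the two partition conventions is the complementation $\la\leftrightarrow\mu$ inside the $r\times d$ rectangle implicit in formulas \eqref{mu} and \eqref{part3}, so that the $S$ produced here agrees with the one assigned to the relevant multi-partition by Theorem \ref{bijection}; this is a consistency remark rather than something needed for the stated equality. The main obstacle I anticipate is purely notational: being scrupulous about the off-by-one indexing in the Schubert-cell condition (the $i=0$ and $i=r$ endpoints, and the convention $\la_0=d$, $\la_{r+1}=0$) so that the jump set comes out to be exactly $S$ and not a shift of it. Once the dictionary "$\dim(U\cap F_k)=i \iff \rk(m_1,\dots,m_k)=k-i$" is in place, no further computation is required.
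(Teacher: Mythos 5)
Your approach is the same as the paper's: pass from $U=\ker M$ to the surjection $M$ via $\dim(U\cap F_k)=i \iff \rk\ang{m_1,\dots,m_k}=k-i$, and match the resulting rank profile with the defining conditions of $Z_S$. One sentence needs fixing, though: you write that membership in $\Om_\la^\circ$ means the rank ``jumps by one precisely at $k=v_i$... and increases by one at every other step,'' which (a) is self-contradictory and (b) has the roles reversed. The rank is \emph{constant} across the step $k=v_i-1 \to k=v_i$ (since $v_i-1 = d+(i-1)-\la_i$ lies at the top of the $(i-1)$-range, giving $\rk = v_i - i$ there, and at $k=v_i$ one again gets $\rk = v_i - i$), and it increases by one at every step $k\notin S'$. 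So $m_{v_i}$ \emph{is} a combination of the earlier $m_j$'s, which is what makes $S'=C(S)$ the critical set. Your ``equivalently'' clause states the correct conclusion — the jump set is $S=[1,w]\setminus S'$ — so the error is a slip rather than a gap, but it should be corrected to match the conclusion it purports to restate. The rest (checking $|S'|=r$, $|S|=d$, the $i=0$ and $i=r$ endpoints, and the identification of $\ang{m_i:i<j}$ with $\ang{m_i:i\in S,\ i<j}$ inside $Z_S$) is sound and aligns with the paper.
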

\begin{proof}
We have
\begin{equation*}
0=v_0<v_1<\dots<v_r\le d+r=w
\end{equation*}
so that $S'$ consists of $r$ elements and $S$ consists of $d$ elements.

Given $U\sbs\Gr_r(\bC^w)$, we consider the quotient
$$M:\bC^w\to \bC^w/U\iso\bC^d.$$
Then the condition $\dim U\cap F_k=i$, means that $\dim\ker (M:F_k\to\bC^d)=i$, hence
\begin{equation}\label{rk1}
\rk\ang{m_1,\dots,m_k}=k-i,\qquad m_i=M(e_i)\in\bC^d.
\end{equation}
The condition that $U\in\Om_\la^\circ$ means that we have the above equality for $v_i\le k<v_{i+1}$.
This means that the rank in \eqref{rk1} increases by $1$ if $v_i<k<v_{i+1}$ (hence $k\in S$) and doesn't increase if $k=v_{i+1}$ (hence $k\in S'$).
This can be reformulated as the condition that
$(m_i)_{i\in S}$ are linearly independent and that
$m_j\in\angs{m_i}{i\in S_{<j}}$ for all $j\in S'$.
This is exactly the definition of the cell $Z_S$.
\end{proof}

\subsection{Relation between partitions}
In Prop.~\ref{prop:cells}, for any partition $\la$ as in \eqref{part3}, we constructed the sets
$$S=(u_1<\dots< u_d)\sbs[1,w],\qquad S'=[1,w]\ms S=(v_1<\dots<v_r),$$
with $v_i=d+i-\la_i$.
Conversely, we can reconstruct the partition $\la$ from $S$ by
$$\la_i=d+i-v_i,\qquad 1\le i\le r.$$
On the other hand, we associated with $S$ a partition $\mu$ with \eqref{mu}
$$\mu_k=\#\sets{i\ge1}{v_i<u_{d-k+1}},\qquad 1\le k\le d,$$

\begin{lemma}
The partitions $\mu$ and $\la$ are conjugate to each other, meaning that
$$\mu_k=\#\sets{i\ge1}{\la_i\ge k},\qquad k\ge1.$$
\end{lemma}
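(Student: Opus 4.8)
The plan is to show that $\mu$ is the conjugate (transpose) partition of $\la$ by directly comparing the two combinatorial descriptions. Recall the setup: $S=(u_1<\dots<u_d)\sbs[1,w]$ and $S'=[1,w]\ms S=(v_1<\dots<v_r)$ with $v_i=d+i-\la_i$, and $\mu_k=\#\sets{i\ge1}{v_i<u_{d-k+1}}$ for $1\le k\le d$. I want to prove $\mu_k=\#\sets{i\ge 1}{\la_i\ge k}$. First I would reindex: writing $\mu_k=\mu_{d-\ell}$ with $\ell=d-k$, the defining formula reads $\mu_{d-\ell}=\#\sets{i\ge 1}{v_i<u_{\ell+1}}$ for $0\le\ell<d$, i.e. $\mu_{d-\ell}$ counts how many elements of $S'$ are strictly below the $(\ell+1)$-st element of $S$.

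The key observation is a counting identity: for any $t\in[0,w]$, among $\{1,\dots,t\}$ the number of elements in $S$ plus the number in $S'$ equals $t$. Apply this with $t=u_{\ell+1}-1$ (the largest integer strictly below $u_{\ell+1}$): the elements of $S$ that are $<u_{\ell+1}$ are exactly $u_1,\dots,u_\ell$, so there are $\ell$ of them, whence $\#\sets{i}{v_i<u_{\ell+1}}=(u_{\ell+1}-1)-\ell$. Therefore $\mu_{d-\ell}=u_{\ell+1}-\ell-1$, giving a clean closed form for $\mu$ in terms of the positions $u_j$. Symmetrically, $v_i-i-1$ counts the number of elements of $S$ below $v_i$, which is consistent with $v_i=d+i-\la_i$ once one recalls $\la$ is a partition with $\la_1\le d$.

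Now I would finish by the standard ``staircase'' / lattice-path argument for conjugate partitions. The sets $S$ and $S'$ partition $[1,w]$ into $d$ ``column steps'' and $r$ ``row steps'', so reading $1,2,\dots,w$ and recording a step $\uparrow$ for each element of $S'$ and $\rightarrow$ for each element of $S$ traces the boundary of the Young diagram of $\la$ inside the $r\times d$ rectangle; the formula $v_i=d+i-\la_i$ is precisely the statement that after the $i$-th $\uparrow$-step the path has made $d-\la_i$ horizontal steps. The transpose $\la'$ has parts $\la'_k=\#\sets{i}{\la_i\ge k}$, and the same path read with the roles of the two axes swapped shows $\la'_k$ equals the number of $\rightarrow$-steps (elements of $S$) preceding the position where the horizontal coordinate first reaches $d-k+1$ — which, after unwinding the indices, is exactly $u_{d-\ell}-\ell-1=\mu_{d-\ell}$ with the appropriate $\ell$. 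Matching these two descriptions of the same lattice path yields $\mu=\la'$.

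The main obstacle is purely bookkeeping: keeping the three index conventions straight — the ascending enumerations of $S$ and $S'$, the ``$\bd_i-k$'' reversal built into the bijection of Theorem \ref{bijection} via \eqref{mu}, and the ``$d+i-\la_i$'' shift in Proposition \ref{prop:cells}. I expect no genuine difficulty beyond carefully verifying the off-by-one in $\mu_{d-\ell}=u_{\ell+1}-\ell-1$ and checking boundary cases ($\la_i=0$, $\la_1=d$, empty partitions), after which conjugacy follows from the elementary fact that a $0/1$ word with $d$ zeros and $r$ ones encodes a partition and its transpose simultaneously.
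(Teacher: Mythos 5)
Your lattice-path (staircase) encoding of the complementary pair $S,S'\subset[1,w]$ is a genuinely different route from the paper's and, once the indexing is corrected, gives a valid proof. Reading $1,2,\dots,w$ and recording a $\rightarrow$-step for each element of $S$ and a $\uparrow$-step for each element of $S'$, the $i$-th $\uparrow$-step occurs at horizontal coordinate $v_i-i=d-\la_i$ (not $v_i-i-1$: there are $i-1$, not $i$, elements of $S'$ strictly below $v_i$), and the $(d-k+1)$-th $\rightarrow$-step occurs at height $\#\{j:v_j<u_{d-k+1}\}=\mu_k$. Since $\la'_k=\#\{i:\la_i\ge k\}=\#\{i:d-\la_i\le d-k\}$ is exactly the number of $\uparrow$-steps occurring at horizontal coordinate $\le d-k$, i.e.\ before the $(d-k+1)$-th $\rightarrow$-step, one gets $\la'_k=\mu_k$. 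The paper instead proves the pointwise biconditional $\la_i\ge k\iff v_i<u_{d-k+1}$ by two short counting arguments and then reads off the conjugacy directly from the definitions; no picture is needed. Your approach buys a global picture, the paper's buys brevity; both are fine.

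As written, though, your sketch has concrete slips and the final step is asserted rather than proved. Besides the off-by-one $v_i-i-1$ vs.\ $v_i-i$ noted above, the conclusion speaks of counting $\rightarrow$-steps (elements of $S$) where it should count $\uparrow$-steps (elements of $S'$), and invokes the expression $u_{d-\ell}-\ell-1$, which does not match the formula you derived ($\mu_{d-\ell}=u_{\ell+1}-\ell-1$); setting $\ell=d-k$ the correct closed form is $\mu_k=u_{d-k+1}-(d-k)-1$. Finally, ``matching these two descriptions of the same lattice path'' is the entire content of the lemma and needs an explicit sentence: the number of vertical steps strictly before position $u_{d-k+1}$ is by definition $\#\{j:v_j<u_{d-k+1}\}=\mu_k$, and it equals the number of $i$ with $d-\la_i\le d-k$, which is $\la'_k$. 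With those repairs the argument closes.
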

\begin{proof}
We need to show that
$$\la_i\ge k\iff v_i<u_{d-k+1}.$$
The first inequality can be written in the form
$v_i\le d-k+i.$
In this case the number of $j$ with $u_j<v_i$ is $\le d-k$, implying that $v_i<u_{d-k+1}$.

If $v_i<u_{d-k+1}$, then the number of $j$ with $u_j<v_i$ is $\le d-k$.
Therefore $v_i$ is bounded by $d-k$ (the number of $u_j$ on the left of $v_i$) plus $i$ (the number of $v_j$ on the left of $v_i$), hence $v_i\le d-k+i$.
\end{proof}
 
We conclude from the previous lemma that
\begin{equation}
Z_\mu=\Om_{\mu'}^\circ,\qquad
\bar Z_\mu=\Om_{\mu'}.
\end{equation}
where $\mu'$ denotes the conjugate partition.
Schubert varieties have a decomposition
\begin{equation}
\Om_\la=\bigsqcup_{\nu\supseteq\la}\Om_\nu^\circ,
\end{equation}
where $\nu\supseteq\la$ if $\nu_i\ge\la_i$ for all $i$.
This implies that
\begin{equation}
\bar Z_\mu=\bigsqcup_{\nu\supseteq\mu}Z_\nu.
\end{equation}
On the other hand, we have seen in \S\ref{sec:closures} that the closure of a cell is not a disjoint union of cells for more general quivers.
The class of a Schubert variety $[\Om_{\mu'}]\in A_*(\Gr^d(W))$ is independent of a flag.
Therefore the class $[\bar Z_\mu]$ is independent of the choice of a total order on~$\cP$.
On the other hand, we will see in \S\ref{sec:dependence} that this is not the case for more general quivers.

\section{Coha modules}

\subsection{Cohomological Hall algebras}
\label{ss:CoHA}
Cohomological Hall algebras were introduced in \cite{kontsevich_cohomological}. 
They are defined for a quiver with a potential and use cohomology of vanishing cycle complexes
associated with the trace of the potential.
We restrict in this paper to the case where the potential is trivial.
In this case, way may work with ordinary equivariant cohomology groups.

Let $Q$ be a quiver with the set of vertices $I$
and the set of arrows $Q_1$.
We call elements $\bd \in\bN^I\sbs\Ga:=\bZ^I$ the dimension vectors.
As before, we define
$$
R_\bd = R(Q,\bd) = \bigoplus_{(a:i\to j) \in Q_1} \Hom(\bC^{\bd_i},\bC^{\bd_j}).
$$
It is a finite-dimensional complex vector space.
We regard an element of $R_\bd$ as a representation of $Q$ of dimension vector $\bd$. We consider the complex linear algebraic group $G_\bd = \prod_{i \in I} \GL_{d_i}(\bC)$ and let an element $g = (g_i)_{i \in I} \in G_\bd$ act on $M = (M_a)_{a \in Q_1} \in R_\bd$ by
$$
	g \cdot M := (g_jM_ag_i^{-1})_{(a:i \to j) \in Q_1}.
$$
Two elements of $R_\bd$ are isomorphic as representations of $Q$ if and only if they lie in the same $G_\bd$-orbit. The $\bC$-valued points of the quotient stack $[R_\bd/G_\bd]$ therefore correspond to isomorphism classes of $Q$-representations of dimension vector $\bd$.

Let $\be \in \bN^I$ be another dimension vector. For every $i \in I$, we identify $\bC^{\bd_i}$ with the subspace of $\bC^{\bd_i+\be_i}$ spanned by the first $\bd_i$ coordinate vectors. Let
\begin{equation}
R_{\bd,\be} =
\sets{M\in R_{\bd+\be}}
{M_a(\bC^{\bd_{i}})\sbs\bC^{\bd_{j}}\ \forall a:i\to j}.
\end{equation}
It is a linear subspace of $R_{\bd+\be}$. Elements of $R_{\bd,\be}$ are representations $M \in R_{\bd+\be}$ for which $M_a$, $a:i\to j$, has the following block upper triangular shape:
$$
	\begin{blockarray}{ccc} 
		& \bC^{\bd_{i}} & \bC^{\be_{i}} \\
		\begin{block}{c(cc)}
			\bC^{\bd_{j}} & M'_a & * \\
			\bC^{\be_{j}} & 0 & M''_a \\
		\end{block}
	\end{blockarray}
$$
For such $M \in R_{\bd,\be}$, we obtain representations $M' \in R_\bd$ and $M'' \in R_\be$. This gives rise to the maps
$$
	R_\bd \times R_\be \xleftarrow[]{p} R_{\bd,\be} \xrightarrow[]{i} R_{\bd+\be}.
$$
Note that $i$ is a closed embedding of codimension 
$r_1 := \sum_{a:i\to j} \bd_{i}\be_{j}$.
Let
$$
G_{\bd,\be} =
\sets{g \in G_{\bd+\be}}
{g_i(\bC^{\bd_i})\sbs\bC^{\bd_i}\ \forall i\in I},
$$
i.e.\ for $g \in G_{\bd,\be}$, the matrix $g_i$ has a similar block upper triangular form to the one from above. 
The group $G_{\bd,\be}$ is a parabolic subgroup of $G_{\bd+\be}$
and the dimension of the homogeneous space $G_{\bd+\be}/G_{\bd,\be}$ is $r_2 = \sum_{i \in I} \bd_i\be_i$. 
The subvariety $R_{\bd,\be}$ is $G_{\bd,\be}$-invariant. We obtain morphisms of algebraic groups
$$
	G_\bd \times G_\be \xleftarrow[]{q} G_{\bd,\be} \xrightarrow[]{j} G_{\bd+\be}.
$$
The map $p$ is $G_{\bd,\be}$-equivariant with respect to $q$ and $i$ is $G_{\bd,\be}$-equivariant with respect to $j$.

We now pass to equivariant cohomology. For a smooth complex variety $X$, equipped with an action of a complex algebraic group $G$, we consider both $X$ and $G$ with the euclidean topology. We form its equivariant singular cohomology $H_G^*(X;\Q)$ and regard it as a $\Z$-graded $\Q$-vector space. In the following, we will always work with rational coefficients and therefore write $H_G^*(X)$ for $H_G^*(X;\Q)$. 

The maps $p$, $i$, $q$, and $j$ give rise to linear maps in equivariant cohomology as follows:
$$
	\begin{tikzcd}[column sep=small]
		H_{G_\bd}^*(R_\bd) \otimes H_{G_\be}^*(R_\be) \arrow{d}{\iso} & & & H_{G_{\bd+\be}}^*(R_{\bd+\be})[2r_1-2r_2]\\
		H_{G_\bd \times G_\be}^*(R_\bd \times R_\be) \arrow{r}{q^*}[swap]{\iso} & H_{G_{\bd,\be}}^*(R_\bd \times R_\be) \arrow{r}{p^*}[swap]{\iso} & H_{G_{\bd,\be}}^*(R_{\bd,\be}) \arrow{r}{i_*} & H_{G_{\bd,\be}}^*(R_{\bd+\be})[2r_1] \arrow{u}{j_*}
	\end{tikzcd}
$$
Note that under the identification $H_{G_{\bd,\be}}^*(R_{\bd+\be}) \iso H_{G_{\bd+\be}}^*(G_{\bd+\be} \times^{G_{\bd,\be}} R_{\bd+\be})$, the map $j_*$ is the push-forward along $G_{\bd+\be} \times^{G_{\bd,\be}} R_{\bd,\be} \to R_{\bd+\be}$. Note also that $r_1-r_2 = -\chi(\bd,\be)$.

We define the $\Ga$-graded vector space
\begin{equation}
\cH = \bigoplus_{\bd \in \bN^I} \cH_\bd,\qquad
\cH_\bd = H_{G_\bd}^*(R_\bd).
\end{equation}
The composition of the above maps gives a morphism 
$*: \cH_\bd \otimes \cH_\be \to \cH_{\bd+\be}$ of $\bQ$-vector spaces and the induced morphism
$$*:\cH\ts\cH\to\cH$$
of \Ga-graded vector spaces.

\begin{theorem}[{\cite[\S2.2]{kontsevich_cohomological}}]
The map $*:\cH\ts\cH\to\cH$ defines the structure of an associative (unital)
$\Gamma$-graded algebra on \cH.
\end{theorem}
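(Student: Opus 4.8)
The plan is to check the three defining properties separately: that $*$ is $\Ga$-graded, that it is unital, and that it is associative. The first two are essentially formal and I would dispose of them quickly; the associativity is the one substantive point, and I would reduce it, as in \cite[\S2.2]{kontsevich_cohomological}, to a diagram chase through a triple-flag space.

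\textbf{Gradedness and unitality.} Gradedness is built into the construction: the composite of $q^*$, $p^*$, $i_*$, $j_*$, together with the degree shift $[2r_1-2r_2]$ (recall $r_1-r_2=-\hi(\bd,\be)$), carries $\cH_\bd\ts\cH_\be$ into $\cH_{\bd+\be}$, so $*$ is a morphism of $\Ga$-graded vector spaces. For the unit I would take $1\in\cH_0=H^*_{G_0}(R_0)\iso H^*(\pt)=\Q$ and compute $1*a$ for $a\in\cH_\be$ directly: with $\bd=0$ one has $R_{0,\be}=R_\be$, $G_{0,\be}=G_\be$, trivial flag data, $r_1=r_2=0$, and each of $p,q,i,j$ equal to the identity, so $1*a$ is just $a$ under the canonical isomorphism $\Q\ts\cH_\be\isoto\cH_\be$; the identity $a*1=a$ is symmetric.

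\textbf{Associativity.} Here I would, for $a\in\cH_\bd$, $b\in\cH_\be$, $c\in\cH_\bg$, prove $(a*b)*c=a*(b*c)$ in $\cH_{\bd+\be+\bg}$ by factoring both sides through one common object. First I would fix coordinate subspaces $\bC^\bd\sbs\bC^{\bd+\be}\sbs\bC^{\bd+\be+\bg}$ and introduce the space $R_{\bd,\be,\bg}\sbs R_{\bd+\be+\bg}$ of representations preserving this two-step flag, the parabolic $G_{\bd,\be,\bg}\sbs G_{\bd+\be+\bg}$ preserving it, the projection $R_{\bd,\be,\bg}\to R_\bd\xx R_\be\xx R_\bg$, the closed embedding $R_{\bd,\be,\bg}\emb R_{\bd+\be+\bg}$, and the corresponding group homomorphisms $G_\bd\xx G_\be\xx G_\bg\leftarrow G_{\bd,\be,\bg}\to G_{\bd+\be+\bg}$. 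Next I would record the two geometric facts that make both bracketings factor through this datum: (i) the square exhibiting $R_{\bd,\be,\bg}$ as the preimage of $R_{\bd,\be}$ (resp.\ of $R_{\be,\bg}$) under a linear projection is cartesian with a regular closed embedding on one leg, so there proper pushforward commutes with pullback; and (ii) the associated-bundle operation $G\xx^{P}(-)$ is transitive, so inducing from $G_{\bd,\be,\bg}$ up to $G_{\bd+\be+\bg}$ in two stages---through $G_{\bd+\be,\bg}$ in one bracketing, through $G_{\bd,\be+\bg}$ in the other---agrees with doing it in a single step. Combining (i) and (ii) with functoriality of pullback, smooth base change for the cartesian squares linking the successive induction stages, and the projection formula, I would assemble a single commutative diagram whose two boundary composites are exactly $(a*b)*c$ and $a*(b*c)$; and I would check that the two total degree shifts agree, which they do by bilinearity of $\hi$ (both equal $-2\bigl(\hi(\bd,\be)+\hi(\bd,\bg)+\hi(\be,\bg)\bigr)$).

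\textbf{Main obstacle.} No genuinely new idea is required; the difficulty is bookkeeping. At every node of the large diagram one must correctly identify the shape of the square involved---regular closed embedding, smooth morphism, or associated-bundle projection---so as to apply the appropriate exchange law (base change, smooth base change, or the projection formula). Commutativity of each elementary square is routine, but arranging the whole diagram so that its two boundary paths are precisely the two parenthesizations, and tracking the cohomological shifts through it, is where the care is needed.
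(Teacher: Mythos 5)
The paper does not prove this theorem; it cites it directly from Kontsevich--Soibelman \cite[\S2.2]{kontsevich_cohomological} and records only the correspondence maps $p,i,q,j$. Your proposal correctly reconstructs the standard argument from that reference: gradedness and unitality checked directly, and associativity reduced to a diagram chase through the two-step flag space $R_{\bd,\be,\bg}$ with its parabolic $G_{\bd,\be,\bg}$, using base change for the cartesian squares, transitivity of the associated-bundle construction, and bilinearity of $\chi$ to match the degree shifts; this is faithful to the cited proof.
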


The algebra $\cH$ is called the \idef{cohomological Hall algebra} (\coha) of the quiver $Q$.

\begin{remark}
Assume that $Q$ is a symmetric quiver,
meaning that $\chi=\chi_Q$ is a symmetric bilinear form.
In this case, we may refine the construction as follows. Define the \bZ-graded vector space
$$
	\cH_\bd := H_{G_\bd}^*(R_\bd)[-\chi(\bd,\bd)].
$$
With this degree shift, the linear map  
$*:\cH_\bd \otimes \cH_\be \to \cH_{\bd+\be}$
is homogenous with respect to the $\bZ$-grading; here symmetry of the Euler form is crucial. 
This means that $\mathcal{H}$ becomes a $\Gamma$-graded algebra in the category of $\bZ$-graded vector spaces.
\end{remark}


\subsection{Shuffle algebra description}
Let us recall an explicit shuffle product description of the multiplication of \coha
from \cite{kontsevich_cohomological}.
For any $\bd\in\bN^I$, the group $G_\bd$ is linearly reductive.
The subgroup $T_\bd\sbs G_\bd$ consisting of tuples of invertible diagonal matrices is a maximal torus. The corresponding Weyl group is isomorphic to the product of symmetric groups $\Sigma_\bd = \prod_{i \in I} \Sigma_{\bd_i}$. As the space $R_\bd$ is 
$G_\bd$-equivariantly contractible, we obtain
\begin{equation}\label{coha as poly}
\mathcal{H}_\bd = H_{G_\bd}^*(R_\bd) \iso H_{G_\bd}^*(\pt) \iso H_{T_\bd}^*(\pt)^{\Sigma_\bd} \iso \bigotimes_{i \in I} \Q[x_{i,1},\ldots,x_{i,\bd_i}]^{\Sigma_{\bd_i}}.
\end{equation}
The last isomorphism is provided by the isomorphism $H_{T_\bd}^*(\pt) \iso S(X^*(T_\bd) \otimes \Q)$, where $X^*(T_\bd)$ is the group of characters of $T_\bd$. This is the free abelian group generated by the characters $x_{i,k}: T_\bd \to \C^\times$ which selects the $k$\textsuperscript{th} diagonal entry from the $i$\textsuperscript{th} matrix. 

\begin{theorem}[{\cite[Thm.\ 2]{kontsevich_cohomological}}]
Let $f \in \mathcal{H}_\bd$ and $g \in \mathcal{H}_\be$. The product $f * g \in \mathcal{H}_{\bd+\be}$ is given by
\begin{equation}
f*g 
=\sum_{\si\in\Sh(\bd,\be)}\si\rbr{
f(x_{i,r})_{(i,r)\in I_\bd}
\cdot g(x_{j,\bd_j+s})_{(j,s)\in I_\be} \cdot 
\prod_{i,j \in I} \prod_{r=1}^{\bd_i} \prod_{s=1}^{\be_j}
(x_{j,\bd_j+s} - x_{i,r})^{-\chi(e_i,e_j)}
}
\end{equation}
where the sum runs over all $(\bd,\be)$-shuffles, meaning $\si\in\Si_{\bd+\be}$ satisfying
\begin{equation}
\si_i(1)<\dots<\si_i(\bd_i),\qquad
\si_i(\bd_i+1)<\dots<\si_i(\bd_i+\be_i)\qquad \forall i\in I,
\end{equation}
and the set $I_\bd$ is given by 
$I_\bd=\sets{(i,k)}{i\in I,\,1\le k\le\bd_i}$.
\end{theorem}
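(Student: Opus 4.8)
The plan is to unwind, term by term, the composition of the maps $q^*$, $p^*$, $i_*$, $j_*$ defining the product $*$ in \S\ref{ss:CoHA}, after transporting every equivariant cohomology group appearing there to its polynomial model \eqref{coha as poly}. Since $R_\bd$, $R_\be$, $R_\bd\xx R_\be$, $R_{\bd,\be}$ and $R_{\bd+\be}$ are equivariantly contractible linear spaces, each group in the composition becomes the equivariant cohomology of a point, i.e.\ the ring of polynomials invariant under the Weyl group of the acting group. The parabolic $G_{\bd,\be}$ has the same maximal torus $T_{\bd+\be}$ and the same Weyl group $\Si_\bd\xx\Si_\be$ as its Levi quotient $G_\bd\xx G_\be$; since $q$ is the Levi projection, with contractible (unipotent) kernel, $q^*$ is an isomorphism, and under the identification of the $R_\be$-variables $y_{j,s}$ with $x_{j,\bd_j+s}$ it becomes the identity. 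Likewise $p$ is a $G_{\bd,\be}$-equivariant affine-linear fibration, so $p^*$ is an isomorphism, which is also the identity on polynomial models. Hence $p^*q^*(f\ts g)$ is the polynomial $f\bigl((x_{i,r})_{(i,r)\in I_\bd}\bigr)\,g\bigl((x_{j,\bd_j+s})_{(j,s)\in I_\be}\bigr)\in H^*_{G_{\bd,\be}}(\pt)$.

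Next I would treat $i_*$. The map $i: R_{\bd,\be}\emb R_{\bd+\be}$ is a $G_{\bd,\be}$-equivariant closed embedding of linear spaces of codimension $r_1=\sum_{a: i\to j}\bd_i\be_j$, so on polynomial models $i_*$ is cup product with the equivariant Euler class of the normal bundle $N=R_{\bd+\be}/R_{\bd,\be}$; this class has degree $2r_1$ and accounts for the shift $[2r_1]$ in the diagram. Reading off the block description of $R_{\bd,\be}$, the bundle $N$ is the direct sum, over all arrows $a: i\to j$, of the block $\Hom(\bC^{\bd_i},\bC^{\be_j})$ that is forced to vanish in $R_{\bd,\be}$; its $T_{\bd+\be}$-weights are $x_{j,\bd_j+s}-x_{i,r}$ with $1\le r\le\bd_i$ and $1\le s\le\be_j$. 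Hence the Euler class of $N$ is $\prod_{i,j\in I}\prod_{r=1}^{\bd_i}\prod_{s=1}^{\be_j}(x_{j,\bd_j+s}-x_{i,r})^{a_{ij}}$, where $a_{ij}$ is the number of arrows $i\to j$, and $i_*p^*q^*(f\ts g)$ equals this class times $f\,g$.

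The heart of the argument is $j_*$. Here $j: G_{\bd,\be}\emb G_{\bd+\be}$ is the inclusion of a parabolic subgroup, so $BG_{\bd,\be}\to BG_{\bd+\be}$ is a fibre bundle whose fibre is the partial flag variety $G_{\bd+\be}/G_{\bd,\be}\iso\prod_{i\in I}\Gr(\bd_i,\bd_i+\be_i)$, and $j_*$ is integration along this fibre, lowering degree by $2r_2$ with $r_2=\sum_{i\in I}\bd_i\be_i$. I would evaluate it by the Atiyah--Bott localization formula for the residual $T_{\bd+\be}$-action on the fibre: its fixed points are indexed exactly by the $(\bd,\be)$-shuffles $\si\in\Sh(\bd,\be)$ (the fixed point corresponding to $\si$ being spanned by the coordinate subspaces $\si_i(\{1,\dots,\bd_i\})\sbs[1,\bd_i+\be_i]$), the restriction to it of a class $h$ is $\si(h)$, and the equivariant Euler class of the tangent space there is $\si(D)$ with $D=\prod_{i\in I}\prod_{r=1}^{\bd_i}\prod_{s=1}^{\be_i}(x_{i,\bd_i+s}-x_{i,r})$ the Euler class of the tangent space at the base point. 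Therefore $j_*(h)=\sum_{\si\in\Sh(\bd,\be)}\si(h/D)$; alternatively one may quote the standard Gysin formula for flag bundles.

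Combining the three steps, $f*g=j_*\bigl(i_*p^*q^*(f\ts g)\bigr)$ is equal to
$$\sum_{\si\in\Sh(\bd,\be)}\si\rbr{f(x_{i,r})\,g(x_{j,\bd_j+s})\cdot
\frac{\prod_{i,j\in I}\prod_{r=1}^{\bd_i}\prod_{s=1}^{\be_j}(x_{j,\bd_j+s}-x_{i,r})^{a_{ij}}}
{\prod_{i\in I}\prod_{r=1}^{\bd_i}\prod_{s=1}^{\be_i}(x_{i,\bd_i+s}-x_{i,r})}},$$
and since $-\chi(e_i,e_j)=a_{ij}-\delta_{ij}$, the two products of linear forms merge into $\prod_{i,j\in I}\prod_{r=1}^{\bd_i}\prod_{s=1}^{\be_j}(x_{j,\bd_j+s}-x_{i,r})^{-\chi(e_i,e_j)}$, which is exactly the asserted formula; the total degree shift $2r_1-2r_2=-2\chi(\bd,\be)$ is consistent as well. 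The genuine obstacle is not any individual step but the uniform bookkeeping of conventions — which block of $R_{\bd+\be}$ is the normal bundle, the direction of each weight difference, and the choice of shuffle coset representatives — arranged so that the Euler class produced by $i_*$ and the denominator produced by $j_*$ fuse into a single product with exponent $-\chi(e_i,e_j)$; granted the localization formula for $j_*$, everything else is formal.
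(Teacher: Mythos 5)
The paper does not prove this theorem: it is quoted directly from Kontsevich--Soibelman with the citation \cite[Thm.~2]{kontsevich_cohomological}, so there is no in-paper proof to compare against. Your reconstruction is correct and is essentially the argument used in the original source: identify $p^*$ and $q^*$ as isomorphisms on polynomial models (equivariant contractibility plus the unipotent kernel of the Levi projection), compute $i_*$ as cup product with the $T_{\bd+\be}$-equivariant Euler class of the normal bundle $N=\bigoplus_{a\colon i\to j}\Hom(\bC^{\bd_i},\bC^{\be_j})$, and compute $j_*$ by the Gysin/localization formula for the Grassmannian bundle with fiber $\prod_i\Gr(\bd_i,\bd_i+\be_i)$, whose torus fixed points are indexed by $(\bd,\be)$-shuffles. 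The fusion of the Euler class numerator $\prod (x_{j,\bd_j+s}-x_{i,r})^{a_{ij}}$ with the denominator $\prod_{i}\prod_{r,s}(x_{i,\bd_i+s}-x_{i,r})$ via $-\chi(e_i,e_j)=a_{ij}-\delta_{ij}$ is exactly right. Your caveat about bookkeeping conventions (which block is normal, coset representatives, direction of weight differences) is the genuine content of carrying this out carefully, and you have organized it correctly.
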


Note that in the above theorem, $e_i \in \mathbb{N}^I$ is the simple root at the vertex $i$, while $\be_i$ is the $i$\textsuperscript{th} entry of the dimension vector $\be$.

\subsection{Modules over the Cohomological Hall algebra}
Modules over the Cohomological Hall algebra that come from framed stable representations were introduced in \cite{soibelman_remarks}. We briefly recall the construction.
Given a framing vector $\bw\in\bN^I$, we defined earlier the non-commutative Hilbert scheme
$$\Hilb_{\bd,\bw}=R^{\f,\st}_\bd/G_\bd,\qquad \bd\in\bN^I,$$
where $R^{\f,\st}_\bd\sbs R^{\f}_\bd$ is the open subvariety of stable framed representations in
$$R^\f_\bd
=R^\f(Q,\bd) = R(Q,\bd) \oplus \underbrace{\bop_{i\in I}\Hom(\bC^{\bw_i},\bC^{\bd_i})}_{=: F_{\bw,\bd}}.$$
We can write $M\in R^\f_\bd$ (uniquely) as $M = (N,f)$ with $N \in R(Q,\bd)$ and $f \in F_{\bw,\bd}$; sometimes we call $f$ the framing datum.

Let $\bd, \be$ be two dimension vectors of $Q$. 
We consider
\begin{equation}
R_{\bd,\be}^\f := R_{\bd,\be} \oplus F_{\bw,\bd+\be},
\qquad
R_{\bd,\be}^{\f,\st} := R_{\bd,\be}^\f \cap R_{\bd+\be}^{\f,\st}.
\end{equation}
That means, $R_{\bd,\be}^{\f,\st}$ consists of framed stable representations $M = (N,f)$ of dimension vector $\bd+\be$, such that $N$ lies in $R_{\bd,\be}$. We consider the maps 
$$R_\bd \times R_\be\xlto pR_{\bd,\be}\xto iR_{\bd+\be}$$
from \S\ref{ss:CoHA}. Let $M = (N,f) \in R_{\bd,\be}^{\f,\st}$. We have $p(N) = (N',N'')$. As $F_{\bw,\bd+\be} = F_{\bw,\bd} \oplus F_{\bw,\be}$, we may decompose $f = (f',f'')$ accordingly. Since $N$ is generated by $\im f$ as a $\C Q$-module, the quotient $N''$ is generated by $\im f''$. So $(N'',f'') \in R_\be^{\f,\st}$. Note that the submodule $N'$ need not be generated by $\im f'$. We hence obtain maps
$$
	R_\bd \times R_{\be}^{\f,\st} \xleftarrow[]{p} R_{\bd,\be}^{\f,\st} \xrightarrow[]{i} R_{\bd+\be}^{\f,\st}.
$$
We again pass to equivariant cohomology.
As the action of $G_\bd$ on $R_\bd^{\f,\st}$ is free and 
$\Hilb_{\bd,\bw} = R_\bd^{\f,\st}/G_{\bd}$ is a geometric $G_{\bd}$-quotient, we obtain the isomorphism
\begin{equation}
H_{G_\bd}^*(R_\bd^{\f,\st}) \iso H^*(\Hilb_{\bd,\bw}).
\end{equation} 
Under this identification, we get a map
\begin{equation}
*: H_{G_\bd}^*(R_\bd) \otimes H^*(\Hilb_{\be,\bw}) \to H^*(\Hilb_{\bd+\be,\bw})[-\chi(\bd,\be)]
\end{equation}
similarly to the definition of the product in the \coha.
We now define the \Ga-graded vector space
\begin{equation}
\cM_{\f}=\bop_{\bd\in\Ga}\cM_{\f,\bd},\qquad
\cM_{\f,\bd}=H^*(\Hilb_{\bd,\bw}).
\end{equation}
The map $*$ induces a structure of a $\Gamma$-graded $\mathcal{H}$-module on $\cM_\f$.
\begin{remark}
If the quiver $Q$ is symmetric, we define
$$
	\mathcal{M}_{\f,\bd} = H^*(\Hilb_{\bd,\bw})[-\chi(\bd,\bd)]
$$
and obtain a $\Gamma$-graded $\mathcal{H}$-module $\cM_\f$ in the category of $\bZ$-graded vector spaces.
\end{remark}

Consider the composition 
\begin{equation}
R_{\bd}^{\f,\st} \hookrightarrow R_{\bd}^\f = R_{\bd} \times F_{\bw,\bd} \to R_\bd
\end{equation}
of the open inclusion and the projection which forgets the framing vector. 
Pulling back gives rise to the map
$\phi: \mathcal{H}_\bd \to \mathcal{M}_{\f,\bd}$.

\begin{theorem}[{\cite[Thm.\ 3.2]{franzen_semia}}]
The map
\begin{equation}
\phi: \cH=\bop_\bd\cH_\bd \to \cM_\f=\bop_\bd\cM_{\f,\bd}
\end{equation}
is an epimorphism of $\Gamma$-graded $\mathcal{H}$-modules
and its kernel is given by
\begin{equation}
\ker\vi=\sum_{\bd'\ne0,\bd}\cH_\bd*(e_{\bd'}^\bw\cup\cH_{\bd'}),
\qquad
e_\bd^\bw 
=\prod_{i\in I}(x_{i,1}\dots x_{i,\bd_i})^{\bw_i},
\end{equation}
where the cup product $\cup$ corresponds to the usual product in $\cH_\bd
=\bigotimes_{i\in I}\bQ[x_{i,1},\dots,x_{i,\bd_i}]^{\Si_{\bd_i}}$.
\end{theorem}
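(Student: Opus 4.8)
The idea is to present $\vi$ as a restriction map in equivariant cohomology and to run the localization sequence of the unstable locus. Since $R^\f_\bd=R_\bd\times F_{\bw,\bd}\to R_\bd$ is a $G_\bd$-equivariant vector bundle, pullback identifies $\cH_\bd=H^*_{G_\bd}(R_\bd)$ with $H^*_{G_\bd}(R^\f_\bd)$, and with this identification $\vi:\cH_\bd\to\cM_{\f,\bd}=H^*_{G_\bd}(R^{\f,\st}_\bd)$ is restriction along the open inclusion $R^{\f,\st}_\bd\hookrightarrow R^\f_\bd$. Writing $Z_\bd=R^\f_\bd\ms R^{\f,\st}_\bd$ for the closed $G_\bd$-invariant unstable locus, there is an exact sequence
$$\cdots\to H^*_{G_\bd,Z_\bd}(R^\f_\bd)\xrightarrow{\ \io_*\ }H^*_{G_\bd}(R^\f_\bd)\xrightarrow{\ \vi\ }H^*_{G_\bd}(R^{\f,\st}_\bd)\to\cdots,$$
equivalently the localization sequence in equivariant Borel--Moore homology. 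I would first observe that all groups in sight are concentrated in even degrees: $H^*(BG_\bd)$ and the polynomial ring $\cH_\bd$ are even, $\cM_{\f,\bd}=H^*(\cN_{\bd,\bw})$ is even by the cell decomposition of \S\ref{sec:cell dec} together with Theorem \ref{th:cell-dec-BM}, and $H^*_{G_\bd}(Z_\bd)$ is even because (see below) $Z_\bd$ is stratified by pieces whose equivariant cohomology is a tensor product of such even rings. Hence the sequence splits into short exact sequences $0\to H^*_{G_\bd,Z_\bd}(R^\f_\bd)\xrightarrow{\io_*}\cH_\bd\xrightarrow{\vi}\cM_{\f,\bd}\to0$; in particular $\vi$ is surjective --- this is Kirwan surjectivity for the GIT quotient $\cN_{\bd,\bw}=R^\f_\bd\GIT G_\bd$ --- and $\ker\vi=\im\io_*$. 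Finally $\vi$ is a morphism of $\cH$-modules, since the correspondence defining the module action on $\cM_\f$ is obtained from the one defining the CoHA product by the forgetful maps $R^\f_\bullet\to R_\bullet$ and the open inclusions $R^{\f,\st}_\bullet\hookrightarrow R^\f_\bullet$, which intertwine the two constructions.

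Write $K=\sum_{\bd'\ne0}\cH*(e_{\bd'}^\bw\cup\cH_{\bd'})$ for the left $\cH$-submodule of $\cH$ generated by the subspaces $e_{\bd'}^\bw\cup\cH_{\bd'}$ ($\bd'\ne0$); its degree-$\bd$ component is $\sum_{0\ne\bd'\le\bd}\cH_{\bd-\bd'}*(e_{\bd'}^\bw\cup\cH_{\bd'})$, and $K$ equals the submodule written in the statement (the summand $\bd'=\bd$ is absorbed into the others by a short shuffle-algebra argument, as one already sees for Grassmannians). The inclusion $K\sbs\ker\vi$ is the easy half: $e_{\bd'}^\bw$ is the equivariant Euler class of the framing bundle $F_{\bw,\bd'}$, \ie of the $G_{\bd'}$-representation $\bigoplus_{i\in I}(\bC^{\bd'_i})^{\oplus\bw_i}$ regarded as a bundle over $BG_{\bd'}$, and pulled back to $\cN_{\bd',\bw}$ this bundle carries the tautological framing as a global section, which is nowhere vanishing because a stable framed representation of a nonzero dimension vector has $f\ne0$. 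Hence $e_{\bd'}^\bw$ restricts to $0$ in $\cM_{\f,\bd'}$; since $\vi$ in degree $\bd'$ is a ring homomorphism $\cH_{\bd'}\to\cM_{\f,\bd'}$ (pullback along $\cN_{\bd',\bw}\to BG_{\bd'}$) and $\vi$ is $\cH$-linear, every element of $K$ maps to $0$.

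For the reverse inclusion $\ker\vi\sbs K$ I would stratify $Z_\bd$ by the dimension vector $\bd'$ of the subrepresentation $\ang{\im f}$ generated by the framing; the stable locus is $\{\bd'=\bd\}$, so $Z_\bd=\bigsqcup_{\bd'<\bd}\cS_{\bd'}$. On $\cS_{\bd'}$ the datum $(N,f)$ is reconstructed from the stable framed representation $(\ang{\im f},f)$ of dimension $\bd'$, the representation $N/\ang{\im f}$ of dimension $\bd''=\bd-\bd'$ on which the induced framing vanishes, and an extension class; thus $\cS_{\bd'}$ with its $G_\bd$-action is induced from the parabolic $G_{\bd',\bd''}$ acting on a vector bundle over $R^{\f,\st}_{\bd'}\times R_{\bd''}$, so $H^*_{G_\bd}(\cS_{\bd'})\iso\cM_{\f,\bd'}\ts\cH_{\bd''}$, a tensor product of even rings (this is the evenness of $Z_\bd$ used above). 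Peeling the strata off one at a time in the localization sequence and using the evenness to split the Gysin triangles, the contribution of $\cS_{\bd'}$ to $\im\io_*\sbs\cH_\bd$ is the image of an equivariant Gysin map which, by the very computation behind the shuffle formula of \S\ref{ss:CoHA}, is the module action twisted by a single extra factor: the Euler class of the sub-bundle of $F_{\bw,\bd}$ cut out by the vanishing of the induced framing on the quotient $N/\ang{\im f}$, which is exactly the monomial $e_{\bd''}^\bw$ in the $\cH_{\bd''}$-slot. Thus this contribution is $\cH_{\bd'}*(e_{\bd''}^\bw\cup\cH_{\bd''})\sbs K$ (surjectivity of $\vi$ in degree $\bd'$ lets one replace $\cM_{\f,\bd'}$ by $\cH_{\bd'}$), and summing over $\bd'<\bd$ gives $\ker\vi\sbs K$. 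Alternatively, once $K\sbs\ker\vi$ is known, the reverse inclusion follows from a dimension count: the cell decomposition of \S\ref{sec:cell dec} fixes $\dim_\Q H^*(\cN_{\bd,\bw})$ in each degree, and an explicit spanning set of $\cH_\bd/K$ indexed by multi-partitions --- a pure shuffle-algebra computation of the same combinatorial flavour as Theorem \ref{bijection} --- bounds $\dim_\Q(\cH_\bd/K)$ from above by the same number.

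The delicate step is the identification of the Gysin contribution of each stratum with the shuffle product: one must match the equivariant pushforward along the induced incidence variety, term by term, with the explicit formula of \S\ref{ss:CoHA}, and in particular verify that the only excess beyond the bare CoHA action is the framing Euler class $e^\bw$; equivalently, along the alternative route, one must prove the combinatorial claim that $K$ is exactly large enough. The localization-sequence formalism, the evenness, and the easy inclusion $K\sbs\ker\vi$ are comparatively soft once the cell decomposition of \S\ref{sec:cell dec} is in hand.
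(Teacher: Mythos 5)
The paper does not prove this theorem; it imports it verbatim from \cite{franzen_semia} (with the presentation also discussed in \cite{franzen_chow}), so there is no internal proof here to compare your sketch against. That said, your outline is a reasonable reconstruction of the kind of argument one expects: realize $\vi$ as restriction from the $G_\bd$-equivariantly contractible ambient $R^\f_\bd$ to the open stable locus, get surjectivity from the localization sequence and even-degree concentration (Kirwan surjectivity, with evenness of $H^*(\cN_{\bd,\bw})$ supplied by the cell decomposition of \S\ref{sec:cell dec} and Theorem \ref{th:cell-dec-BM}), observe the easy inclusion via the nowhere-vanishing tautological framing section so $\vi(e^\bw_{\bd'})=0$, and then stratify the unstable locus $Z_\bd$ by the dimension vector of the submodule generated by the framing to produce the generators of the kernel via Gysin pushforwards matched against the shuffle formula. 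This is in line with the toolkit of \cite{franzen_semia,franzen_chow}, and the alternative dimension-count route you mention (spanning set of $\cH_\bd/K$ indexed by the multi-partitions $\cS^\f(\bd)$ versus the cell count) is essentially how one would make the argument self-contained using Theorem \ref{bijection}.

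Two cautions. First, as you yourself flag, the core of the hard inclusion $\ker\vi\sbs K$ --- identifying, stratum by stratum, the equivariant Gysin pushforward from $\cS_{\bd'}$ as the shuffle action twisted by exactly the Euler class $e^\bw_{\bd''}$, and controlling the extension terms between strata in the filtration on $H^*_{G_\bd,Z_\bd}(R^\f_\bd)$ --- is precisely where the content lies; asserting that it follows from ``the very computation behind the shuffle formula'' is where a written-out proof would have to spend its effort. Second, a small misreading: in the displayed formula the subscript $\bd'\ne0,\bd$ indicates a double sum over all pairs $(\bd,\bd')$ with $\bd'\ne 0$, i.e. the kernel is the full $\cH$-submodule generated by the $e^\bw_{\bd'}\cup\cH_{\bd'}$; the parenthetical remark in your sketch about absorbing a supposedly excluded $\bd'=\bd$ summand via a shuffle identity is unnecessary (and that identity is not obviously true, so it should be dropped rather than asserted).
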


\subsection{Tautological vector bundles}
We can describe the epimorphism $\vi:\cH\to\cM$ explicitly using tautological vector bundles.
For any vertex $i\in I$, we define the $G_\bd$-linearized
vector bundle $\cV_{\bd,i}$ of rank $\bd_i$ over $R_\bd=R(Q,\bd)$
to be the trivial vector bundle with the fiber over $M\in R_\bd$ equal to $M_i=\bC^{\bd_i}$,
equipped with the $G_\bd$-linearization induced by the projection $G_\bd\to\GL_{\bd_i}(\bC)$.
Under the isomorphism ~\eqref{coha as poly}
\begin{equation}
\mathcal{H}_\bd = H_{G_\bd}^*(R_\bd)\iso
\bts_{i \in I} \Q[x_{i,1},\ldots,x_{i,\bd_i}]^{\Sigma_{\bd_i}}
\end{equation}
the $G_\bd$-equivariant Chern class $c_k(\cV_{\bd,i})\in H^*_{G_\bd}(R_\bd)$ corresponds to the $k$\textsuperscript{th} elementary symmetric function 
\begin{equation}
c_k(\cV_{\bd,i})=e_k(x_{i,1},\ldots,x_{i,\bd_i}).
\end{equation}

Similarly, for any vertex $i\in I$, we define the \idef{$i$-th tautological vector bundle} $\cU_i=\cU_{\bd,i}$ 
to be the rank $\bd_i$ vector bundle over $\Hilb_{\bd,\bw}=R^{\f,\st}_\bd/G_\bd$, with the fiber over $M\in\Hilb_{\bd,\bw}$ equal to $M_i$.
More precisely, consider the trivial vector bundle of rank $\bd_i$ over $R^{\f,\st}_\bd$ with the  $G_\bd$-linearization induced by the projection $G_\bd\to\GL_{\bd_i}(\bC)$.
This bundle descends to the vector bundle $\cU_{\bd,i}$
over the quotient $\Hilb_{\bd,\bw}$.

The epimorphism $\vi$ considered earlier is given by
\begin{gather}
\vi:\cH_\bd\iso\bts_{i\in I}\bQ[x_{i,1},\dots,x_{\bd_i}]^{\Si_{\bd_i}}
\to\cM_{\f,\bd}=H^*(\Hilb_{\bd,\bw}),\\
e_k(x_{i,1},\ldots,x_{i,\bd_i})
=c_k(\cV_{\bd,i})\mto c_k(\cU_{\bd,i}),\qquad i\in I,\, k\ge1.
\end{gather}
Indeed, the vector bundle $\cU_{\bd,i}$ over $\Hilb_{\bd,\bw}$ arises as the descent of the trivial rank $\bd_i$ vector bundle over $R_\bd^{\f,\st}$,
with the $G_\bd$-linearization given by the $i$\textsuperscript{th} factor of $G_\bd$. 
This is precisely the pull-back of $\cV_{\bd,i}$ along the forgetful map $R_\bd^{\f,\st} \to R_\bd$.
\medskip

Because of the surjectivity of $\vi$, we conclude that $H^*(\Hilb_{\bd,\bw})$ is spanned by the vectors of the form
\begin{equation}
\prod_{i\in I}\prod_{k\ge1}c_k(\cU_{\bd,i})^{n_{i,k}}
\end{equation}
with $n_{i,k}\ge0$.
Later we will find a basis of $H^*(\Hilb_{\bd,\bw})$ consisting of vectors of this form.

\section{Degeneracy loci and tautological classes}

\subsection{Products of degeneracy classes}
\label{sec:deg loc}
Let $X$ be a smooth (irreducible) complex algebraic variety of dimension $N$.
Let $E_1,\ldots,E_n$ be vector bundles over $X$ of ranks $r_1,\ldots,r_n$.
For $1\le i\le n$, let $0\le k_i\le r_i$ and let
$$
	\sigma_i : \cO_X^{k_i+1} \to E_i
$$
be a morphism of vector bundles.
Consider the degeneracy locus
\begin{equation}\label{deg loc3}
D(\sigma_i) =D_{k_i}(\si_i)
=\sets{x \in X}{\rk\si_i(x) \leq k_i}
\end{equation}
with the structure of a closed subscheme of $X$ which comes from viewing it as the zero locus $Z\rbr{\bigwedge^{k_i+1} \sigma_i}$.
All irreducible components of $D(\si_i)$ have dimension at least $N-r_i+k_i$  (see \ful[Thm.~14.4]).
In \ful[\S 14.4] one defines a degeneracy class
\begin{equation}
\bD(\sigma_i)=\bD_{k_i}(\sigma_i) \in A_{N-r_i+k_i}(D(\sigma_i)).
\end{equation}
such that its image under the the push-forward along the closed embedding $D(\sigma_i) \emb X$ 
is equal to $c_{r_i-k_i}(E_i) \cap [X]$ (see \ful[Ex.~14.4.2]). Consider the scheme-theoretic intersection 
\begin{equation}
D = D(\sigma_1) \cap \ldots \cap D(\sigma_n).
\end{equation}
All of its irreducible components have dimension at least 
\begin{equation}
d = N - \sum_i(r_i-k_i).
\end{equation}
Our goal is to get a better understanding of the refined product \ful[\S8.1]
\begin{equation}\label{ref prod}
\bD=\bD(\sigma_1) \dots \bD(\sigma_n)\in A_d(D).
\end{equation}

\subsection{Refined products}
Given a morphism $f:X\to Y$ with smooth $Y$, subvarieties $X'\sbs X$, $Y'\sbs Y$ and classes $x\in A_*(X')$, $y\in A_*(Y')$, we consider the Cartesian diagram
\begin{ctikzcd}
X'\xx_Y Y'\rar\dar[hook]& X'\xx Y'\dar[hook]\\
X\rar{\ga_f}&X\xx Y
\end{ctikzcd}
where $\ga_f$ is the graph morphism of $f$
(note that $\ga_f$ is a regular embedding),
and we define the refined product \ful[\S8.1]
$$x\cdot_f y=\ga_f^!(x\xx y)\in A_*(X'\xx_YY'),$$
where $\ga_f^!$ is the refined Gysin morphism corresponding to the above diagram.
If $X=Y$ and $f=\id$, then we denote the above product by $x\cdot y\in A_*(X'\cap Y')$.

\begin{lemma}\label{pull-back}
Let $f:X\to Y$ be a morphism of smooth algebraic varieties (note that $f$ is automatically an \lci morphism), $Y_i\sbs Y$ be a subvariety and $y_i\in A_*(Y_i)$, for $i=1,2$.
Then we have
$$f^!(y_1\cdot y_2)=f^!(y_1)\cdot f^!(y_2)\in A_*(X\xx_Y(Y_1\cap Y_2)),$$
where $y_1\cdot y_2\in A_*(Y_1\cap Y_2)$ and $f^!(y_i)\in A_*(X\xx_YY_i)$, for $i=1,2$.
\end{lemma}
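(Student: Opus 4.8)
The plan is to reduce the identity to two standard compatibilities of refined Gysin homomorphisms, exploiting that $f$ (being an \lci morphism between smooth varieties) factors through its graph, and that the refined product is itself a refined Gysin map. Since $X$ and $Y$ are smooth, the graph $\ga_f:X\emb X\xx Y$ is a regular embedding and the projection $p:X\xx Y\to Y$ is smooth, so $f=p\circ\ga_f$ and $f^!=\ga_f^!\circ p^*$, with $p^*$ the flat pull-back. Likewise, for subvarieties $Z_1,Z_2$ of a smooth variety $Z$ and classes $z_i\in A_*(Z_i)$ one has $z_1\cdot z_2=\de_Z^!(z_1\xx z_2)$, where $\de_Z:Z\emb Z\xx Z$ is the diagonal (a regular embedding) and $\de_Z^!$ is refined over $Z_1\xx Z_2\emb Z\xx Z$. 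Hence it suffices to prove separately
$$p^*(y_1\cdot y_2)=p^*(y_1)\cdot p^*(y_2),\qquad
\ga_f^!(\al_1\cdot\al_2)=\ga_f^!(\al_1)\cdot\ga_f^!(\al_2)$$
(the second for arbitrary $\al_i\in A_*(A_i)$ with $A_i\sbs X\xx Y$), and then compose them: $f^!(y_1\cdot y_2)=\ga_f^!\rbr{p^*(y_1)\cdot p^*(y_2)}=\ga_f^!p^*(y_1)\cdot\ga_f^!p^*(y_2)=f^!(y_1)\cdot f^!(y_2)$.

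For the flat-pull-back case I would use $p^*(y)=[X]\xx y$ together with the fact that, under the canonical isomorphism $(X\xx Y)\xx(X\xx Y)\iso(X\xx X)\xx(Y\xx Y)$, the diagonal of $X\xx Y$ corresponds to $\de_X\xx\de_Y$ while $([X]\xx y_1)\xx([X]\xx y_2)$ corresponds to $[X\xx X]\xx(y_1\xx y_2)$. Multiplicativity of refined Gysin maps on products together with the normalization $\de_X^![X\xx X]=[X]$ then yields $p^*(y_1)\cdot p^*(y_2)=\de_X^![X\xx X]\xx\de_Y^!(y_1\xx y_2)=[X]\xx(y_1\cdot y_2)=p^*(y_1\cdot y_2)$.

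For the regular-embedding case, write $\al_1\cdot\al_2=\de_P^!(\al_1\xx\al_2)$ and $\ga_f^!(\al_1)\cdot\ga_f^!(\al_2)=\de_X^!\rbr{(\ga_f\xx\ga_f)^!(\al_1\xx\al_2)}$ with $P=X\xx Y$, again using multiplicativity for products. The square whose horizontal arrows are $\de_X:X\to X\xx X$ and $\de_P:P\to P\xx P$ and whose vertical arrows are $\ga_f:X\to P$ and $\ga_f\xx\ga_f:X\xx X\to P\xx P$ is Cartesian — the fibre product of $\de_P$ and $\ga_f\xx\ga_f$ is $X$, because $\ga_f$ is a closed immersion, hence injective — and all four maps are regular embeddings. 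Hence the commutativity of refined Gysin homomorphisms \ful[Thm.~6.4] gives $\ga_f^!\circ\de_P^!=\de_X^!\circ(\ga_f\xx\ga_f)^!$, which is precisely the required identity.

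The homological inputs here — multiplicativity of refined Gysin maps for exterior products, $p^*(-)=[X]\xx(-)$ for the projection $p$, the normalization $\de^![Z\xx Z]=[Z]$, and commutativity of refined Gysin homomorphisms — are all standard (\ful[\S6,~\S8]). I do not expect these to be the obstacle; the part that needs real care is the bookkeeping of ambient varieties, namely checking at each stage that the intermediate classes lie in the Chow groups of the intended fibre products — e.g.\ $\ga_f^!p^*(y_i)\in A_*(X\xx_YY_i)$, and $(X\xx_YY_1)\cap(X\xx_YY_2)=X\xx_Y(Y_1\cap Y_2)$ as subschemes of $X$ — so that the two special cases may legitimately be composed to produce a class in $A_*\rbr{X\xx_Y(Y_1\cap Y_2)}$, matching the statement.
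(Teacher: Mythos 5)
Your proof is correct, but it follows a genuinely different route from the paper's. The paper's argument is shorter and stays entirely inside Fulton's refined-product calculus: it applies associativity of refined products \ful[Prop.~8.1.1(a)] to the composite $X\xto fY\xto\id Y$ to get $[X]\cdot_f(y_1\cdot y_2)=([X]\cdot_fy_1)\cdot_fy_2$, identifies $f^!(y_i)=[X]\cdot_fy_i$ via \ful[Prop.~8.1.2], and then uses associativity once more (for $X\xto\id X\xto fY$) to replace $\cdot_f$ by $\cdot$ on the second factor. Your approach instead unpacks the definition of $f^!$ for an \lci morphism, factoring $f=p\circ\ga_f$ and proving multiplicativity separately for the flat pullback $p^*$ and for the regular-embedding Gysin map $\ga_f^!$. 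This is more foundational but requires assembling several standard compatibilities (exterior products, commutativity/functoriality of refined Gysin maps, identification of diagonals on products) where the paper needs only two quoted facts; both buy the same result, and both are legitimate.

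One remark on your regular-embedding step: you cite Fulton's commutativity theorem (Thm.~6.4) and invoke the Cartesian-ness of the square whose vertical arrows are $\ga_f,\ga_f\times\ga_f$ and whose horizontal arrows are $\de_X,\de_P$. The cleanest route here is actually functoriality of refined Gysin maps for composable regular embeddings (Fulton's Thm.~6.5): since $\de_P\circ\ga_f=(\ga_f\times\ga_f)\circ\de_X$ as morphisms $X\to P\times P$, functoriality gives
$$\ga_f^!\circ\de_P^!=(\de_P\circ\ga_f)^!=\bigl((\ga_f\times\ga_f)\circ\de_X\bigr)^!=\de_X^!\circ(\ga_f\times\ga_f)^!,$$
with no need to verify that the square is a fiber square (which, as you note, it is, but that fact is not actually what drives the identity). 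The normal bundles in your square do not match up under pullback (there is an excess bundle $\ga_f^*N_{\ga_f}$ along $\de_X$), so a naive "base-change for refined Gysin maps" argument would not go through; the functoriality of composites sidesteps this entirely. Everything else in your proposal — the factorization $f^!=\ga_f^!\circ p^*$, the exterior-product compatibilities, and the bookkeeping that the intermediate classes live in the Chow groups of the correct fiber products — is sound.
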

\begin{proof}
We obtain from associativity \ful[Prop.~8.1.1(a)] applied to $X\xto fY\xto\id Y$ that
$$[X]\cdot_f(y_1\cdot y_2)
=([X]\cdot_f y_1)\cdot_f y_2.$$
By \ful[Prop.~8.1.2] we have $f^!(y_i)=[X]\cdot_fy_i$ for $y_i\in A_*(Y_i)$.
Therefore
$$f^!(y_1\cdot y_2)=f^!(y_1)\cdot_f y_2$$
and we just need to show that $x\cdot_fy_2=x\cdot f^!(y_2)$ for $x=f^!(y_1)$.
But we have $x\cdot f^!(y_2)=x\cdot([X]\cdot_fy_2)
=(x\cdot[X])\cdot_f y_2=x\cdot_f y_2$.
\end{proof}

\subsection{Coefficient lemma}
Let $Y$ be a separated algebraic scheme over $\bC$.
Let $Z \sbs Y$ be an irreducible component of dimension $d$, equipped with the reduced subscheme structure, and let $Y'$ be the closure of $Y\ms Z$ (it is the union of all other irreducible components of $Y$),
also equipped with the reduced subscheme structure. 
There is an exact sequence
$$
	A_d(Z \cap Y') \to A_d(Z) \oplus A_d(Y') \to A_d(Y) \to 0
$$
(see \ful[Ex.\ 1.3.1]). As $\dim Z \cap Y' < \dim Z = d$, we obtain 
$$
A_d(Y) \iso A_d(Z) \oplus A_d(Y') 
= \Z[Z] \oplus A_d(Y').
$$
Therefore every cycle $y \in A_d(Y)$ can be written in a unique way as 
$$y = m\cdot [Z] + y',\qquad
m\in\bZ,\,y'\in A_d(Y').$$
We call $m$ the \emph{coefficient} of $[Z]$ in $y$.

\begin{lemma}\label{intersection lemma}
Using notation from \S\ref{sec:deg loc},
let	$U\sbs X$ be an open subset such that
\begin{enumerate}
\item $Z := D \cap U$ is irreducible and reduced of dimension $d$,
\item $D(\sigma_i) \cap U$ has pure dimension $N - r_i+k_i$, for all $1\le i\le n$.
\end{enumerate}
Then the closure $\bar{Z}\sbs D$ is an irreducible component of $D$ and, if we equip $\bar{Z}$ with the reduced subscheme structure, the coefficient of $[\bar{Z}]$ in the refined product
$\bD=\prod_i\bD(\si_i)\in A_d(D)$ is $1$.
\end{lemma}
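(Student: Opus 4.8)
The plan is to reduce the global statement about the refined product $\bD \in A_d(D)$ to a computation over the open subset $U$, where everything becomes transverse. The key mechanism is the following: refined products commute with restriction to open subsets (this follows from the compatibility of refined Gysin homomorphisms with flat pull-back, \ful[\S8.1]). So first I would restrict all the data to $U$: set $E_i|_U$, $\sigma_i|_U$, and note $D(\sigma_i)\cap U = D(\sigma_i|_U)$ has pure dimension $N-r_i+k_i$ by hypothesis (2), while $D\cap U = Z$ is irreducible and reduced of dimension $d = N - \sum_i(r_i-k_i)$ by hypothesis (1). Since $d = N - \sum_i(r_i - k_i) = \dim X - \operatorname{codim}$, the intersection $Z$ has the expected dimension, i.e. over $U$ the degeneracy loci $D(\sigma_i|_U)$ meet \emph{properly} (each has the expected codimension, and so does their intersection).

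Next I would invoke the refinement of the excess-intersection formula under proper intersection. Over $U$, because each $D(\sigma_i|_U)$ is pure of the expected dimension and their intersection $Z$ is again of the expected dimension, the refined product $\bD(\sigma_1|_U)\cdots\bD(\sigma_n|_U) \in A_d(Z)$ can be computed as an honest intersection product; concretely, by \ful[Prop.~7.1] (or the iterated application of \ful[\S8.1] together with the fact that for properly intersecting Cohen--Macaulay-type cycles the refined Gysin map is given by intersection multiplicities), it equals $\sum_W i(W; \cdot)[W]$ summed over the irreducible components $W$ of $Z$. Since $Z$ is assumed \emph{irreducible and reduced}, there is a single component, namely $Z$ itself, and I must show its multiplicity is $1$. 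Here I would use that the scheme-theoretic intersection $D(\sigma_1|_U)\cap\cdots\cap D(\sigma_n|_U)$, cut out set-theoretically as $Z$ which is reduced, carries intersection multiplicity $1$ along $Z$ precisely because $Z$ is generically reduced of the expected dimension: at a general point of $Z$ the local rings are Cohen--Macaulay (degeneracy loci of expected codimension are Cohen--Macaulay, \ful[Thm.~14.4(c)]) and the intersection is proper, so the multiplicity is the length of the local ring of the scheme-theoretic intersection, which is $1$ since that scheme is reduced at the generic point of $Z$.

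Finally I would transfer this back to the global $D$. The closure $\bar Z \subset D$ has dimension $d$, and since $Z = D\cap U$ already has dimension $d$ and all components of $D$ have dimension $\ge d$, no component of $D$ can strictly contain $\bar Z$; hence $\bar Z$ is an irreducible component of $D$. Applying the coefficient lemma discussion preceding the statement (with $Y = D$, $Z = \bar Z$), the coefficient of $[\bar Z]$ in $\bD \in A_d(D)$ is computed by restricting to the open set $D \setminus Y'$, where $Y'$ is the union of the other components; shrinking $U$ if necessary we may assume $U \cap D = Z$ is disjoint from $Y'$, so the restriction of $\bD$ to $A_d(Z)$ over $U$ is exactly the coefficient times $[Z]$. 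By the previous paragraph that restriction is $1\cdot[Z]$, hence the coefficient is $1$.

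\textbf{Main obstacle.} The delicate point is the multiplicity-one computation in the second step: justifying that the refined product, when restricted to $U$ where the degeneracy loci meet properly, is literally the cycle-theoretic intersection product with the correct (reduced, length-one) multiplicity along $Z$. This requires knowing that each $D(\sigma_i|_U)$ is Cohen--Macaulay of the expected dimension \emph{and} that their common intersection $Z$ is reduced of the expected dimension — the hypotheses (1)--(2) are tailored exactly to guarantee this, but one must carefully cite the relevant statements from \ful[Ch.~7, Ch.~8, Ch.~14] (in particular \ful[Prop.~7.1] for intersection products of properly meeting cycles on smooth varieties, and \ful[Ex.~14.4.2] for the degeneracy classes) and verify the compatibility of the iterated refined products with these identifications. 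Everything else is formal bookkeeping with open restrictions and the exact sequence from \ful[Ex.~1.3.1].
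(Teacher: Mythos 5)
Your strategy is essentially the paper's: restrict along the open embedding $j:U\hookrightarrow X$, use the compatibility of refined products (Lemma \ref{pull-back}) and of degeneracy classes (\ful[Thm.~14.4(d)]) with flat pull-back, reduce the multiplicity computation to $U$ where the scheme $D\cap U=Z$ is reduced and irreducible, and then transfer back via the coefficient decomposition. The structural bookkeeping (irreducible-component step, exact sequence from \ful[Ex.~1.3.1]) is identical.

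The one point where you diverge is the multiplicity-one step, and there your route is both more complicated than necessary and somewhat misfiled. You invoke Cohen--Macaulayness of the degeneracy loci in order to identify the intersection multiplicity with the length of the local ring, and you frame this under the heading of an \emph{excess}-intersection formula, which is not what is happening here: over $U$ the $V_i=D(\sigma_i)\cap U$ meet properly, and the relevant tool is the proper-intersection multiplicity bound, not the excess formula. The paper avoids the Cohen--Macaulay detour entirely: by \ful[Ex.~8.2.1] one has $j^*\bD=\prod_i[V_i]=i(Z,V_1\cdots V_n,U)\cdot[Z]$, and \ful[Prop.~8.2] gives the two-sided squeeze $1\le i(Z,V_1\cdots V_n,U)\le l(\cO_{D\cap U,Z})$. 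Since $D\cap U=Z$ is reduced and irreducible by hypothesis (1), that length is exactly $1$, forcing the multiplicity to be $1$ with no Cohen--Macaulay input at all. A second, minor simplification: you suggest shrinking $U$ so that $D'\cap U=\varnothing$, which would require re-verifying hypothesis (2) on the smaller open set; the paper instead notes directly that $D'\cap U\subset D'\cap\bar Z$ has dimension $<d$, so $A_d(D'\cap U)=0$ and the term $j^*x$ vanishes without any shrinking.
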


\begin{proof}
We first show that $\bar{Z}$ is an irreducible component of $D$. 
It is irreducible being the closure of an irreducible set.
Therefore it is contained in some irreducible component $C$ of~$D$.
If $\bar{Z} \ne C$ then $\dim C > d$.
But $C$ intersects $U$ and thus $\dim C \cap U = \dim C > d = \dim Z$,
while $Z = D \cap U \sps C \cap U$.
A contradiction.

Now we will show that the coefficient of $[\bar{Z}]$ in $\bD$ is one.
We can write
$D = \bar{Z} \cup D'$, where $D'$ is the union of all other irreducible components of $D$.
Then
$$
\bD = m\cdot[\bar{Z}] + x,\qquad
m\in\bZ,\, x\in A_{d}(D').
$$
Consider the open embedding
$j: U \emb X$.
Then
$$
j^*\bD=m\cdot j^*[\bar Z]+j^*x=m\cdot[Z]+j^*x.
$$
(Note that $j^*[\bar{Z}] = [Z]$ as both $Z$ and $\bar{Z}$ are reduced, so $\bar{Z} \cap U = Z$ also scheme-theoretically.)
But $D \cap U = Z\sbs\bar Z$, hence 
$D' \cap U \sbs D'\cap \bar Z$.
Therefore $\dim D' \cap U\le \dim D'\cap\bar Z<d$
and thus $A_{d}(D' \cap U) = 0$ and $j^*x=0$.
We conclude that
$$j^*\bD=m\cdot[Z]$$
and we need to show that the coefficient of $[Z]$ in $j^*\bD$ is one.
The degeneracy locus
of the homomorphism 
$j^*\sigma_i: \OO_U^{k_i+1} \to E_i|_U$
is $D(j^*\sigma_i) = D(\sigma_i) \cap U$. 
As degeneracy classes are compatible with flat pull-backs \ful[Thm.\ 14.4(d)],
we obtain
$$
j^*\bD(\sigma_i)=\bD(j^*\sigma_i)
\in A_{N-r_i+k_i}(D(\sigma_i) \cap U),
$$
where, by abuse of notation, we use the letter $j$ also for the embedding $D(\sigma_i) \cap U \emb D(\sigma_i)$.
As refined products are compatible with pull-backs by Lemma \ref{pull-back}, we obtain
$$
j^*\bD 
=\prod_{i}j^*\bD(\sigma_i) 
=\prod_{i}\bD(j^*\sigma_i) 
\in A_{d}(D \cap U) = A_{d}(Z)=\bZ[Z].
$$
By assumption (2), the scheme
$$V_i=D(\si_i)\cap U=D(j^*\sigma_i),\qquad 1\le i\le n,$$
has pure dimension $N-r_i+k_i$.
Therefore by \ful[Thm.\ 14.4(c)], we get $$\bD(j^*\sigma_i) 
=[D(j^*\sigma_i)]=[V_i],
\qquad 1\le i\le n.$$
This implies that (see \ful[Ex.~8.2.1])
$$
j^*\bD=\prod_i\bD(j^*\si_i)=\prod_i[V_i]
=i(Z,V_1\cdot\ldots\cdot V_n,U)\cdot[Z]\in A_d(Z).
$$
We know by \ful[Prop.\ 8.2] that 
$m=i(Z,V_1\cdot\ldots\cdot V_n,U)$
satisfies
$1 \leq m \leq l(\cO_{D\cap U,Z})$, where $\cO_{D \cap U,Z}$ is the local ring of $D\cap U=\cap_i V_i$ at the generic point of the irreducible component $Z$. 
But as $D \cap U = Z$ (which is reduced and irreducible), the length of this artinian local ring is one, hence $m=1$.
\end{proof}

\subsection{Tautological vector bundles and their sections}
As before, let $Q$ be a quiver with the set of vertices $I$.
Let $\bw\in \bN^I$ and $Q^\bw$ be the corresponding framed quiver.
Let $\cP=\cP^\f$ be the set of paths in $Q^\bw$ that start at $\infty$.
Given a dimension vector $\bd$, we define $\cN_\bd=\cN_{\bd,\f}$ to be the non-commutative Hilbert scheme parameterizing stable framed representations of dimension ~$\bd$.
Recall that its dimension is equal to
\begin{equation}
N=\dim\cN_\bd=\bw\cdot\bd-\hi(\bd,\bd).
\end{equation}

For any vertex $i\in I$, we have
the $i$-th tautological vector bundle 
$\cU_i=\cU_{\bd,i}$ over $\cN_\bd$, with the fiber over $M\in\cN_\bd$ equal to $M_i$.
This vector bundle has rank $\bd_i$.
For any path $u\in\cP_i$, we have a vector $m_u\in M_i$.
The family of such vectors over all $M\in\cN_\bd$ induces a section
\begin{equation}
m_u\in\Ga(\cN_\bd,\cU_i).
\end{equation}
Given a tree $S\in\cP(\bd)$ and $v\in C(S)_i$,
we define \eqref{Sv}
\begin{equation}
S(v)=\sets{u\in S_i}{u<v}\cup\set v,\qquad k_v=\#S(v)-1.
\end{equation}
Then the above sections induce a morphism of vector bundles
\begin{equation}
\si_v=(m_u)_{u\in S(v)}:
\cO^{S(v)}=\cO^{k_v+1}\to \cU_i.
\end{equation}
Note that $k_v\le \bd_i=\rk\cU_i$.
The degeneracy locus $D(S(v))\sbs\cN_\bd$ 
considered in \eqref{deg loc1}
is equal to the degeneracy locus $D(\si_v)$
considered in \S\ref{sec:deg loc}.
By the discussion in \S\ref{sec:deg loc},
we have a degeneracy class
$$\bD(\si_v)\in A_{N-\bd_i+k_v}(D(\si_v))$$
such that its image in $A_{N-\bd_i+k_v}(\cN_\bd)$ is equal to
$$c_{\bd_i-k_v}(\cU_i)\cap[\cN_\bd].$$

Let us consider the degeneracy locus of the tree $S$ \eqref{tree deg locus}
\begin{equation}
D_S
=\bigcap_{v\in C(S)}D(S(v))
=\bigcap_{v\in C(S)}D(\si_v)\sbs\cN_\bd
\end{equation}
and the class
\begin{equation}
\bD_S=\prod_{v\in C(S)}\bD(\si_v)
\in A_m(D_S),\qquad m=N-\sum_{v\in C(S)}(\bd_{t(v)}-k_v).
\end{equation}
We obtain from \eqref{dim Z_S}
and \eqref{dim U_S} that
\begin{equation}
d(S)=\sum_{v\in C(S)}k_v,\qquad
N=\udim C(S)\cdot\bd=\sum_{v\in C(S)}\bd_{t(v)}.
\end{equation}
Therefore
$$m=\sum_{v\in C(S)}k_v=d(S)=\dim Z_S.$$
The image of the class $\bD_S\in A_{d(S)}(D_S)$ in $A_{d(S)}(\cN_\bd)$ is equal to
$$\bD'_S=\rbr{\prod_{i\in I}\prod_{v\in C(S)_i}c_{\bd_i-k_v}(\cU_i)}\cap[\cN_\bd].$$

We know that $A_{d(S)}(\cN_\bd)$ has a basis consisting of $[\bar Z_{S'}]$ with $d(S')=d(S)$.
By Lemma \ref{D_S inclusion}, we have $D_S\sbs\bigsqcup_{S'\ge S}Z_{S'}$.
Therefore we have an expression in $A_{d(S)}(\cN_\bd)$
\begin{equation}\label{D-Z}
\bD'_S
=\sum_{\ov{S'\ge S}{d(S')=d(S)}}n_{S',S}[\bar Z_{S'}]
\end{equation}
with $n_{S',S}\in\bZ$.
Applying Lemma \ref{intersection lemma} to the degeneracy loci $D(\si_v)$ and the open subset $U_S\sbs\cN_\bd$ (satisfying $D_S\cap U_S=Z_S$ by Lemma \ref{D_S inclusion}), we obtain $n_{S,S}=1$.
This implies

\begin{theorem}\label{basis1}
The classes
$$\prod_{i\in I}\prod_{v\in C(S)_i}c_{\bd_i-k_v}(\cU_i)\in A^{N-d(S)}(\cN_\bd),\qquad S\in\cP(\bd),$$
form a basis of $A^*(\cN_\bd)$.
\end{theorem}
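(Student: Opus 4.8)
The plan is to deduce the statement from the cell decomposition of $\cN_\bd$, the triangularity relation \eqref{D-Z}, and Poincar\'e duality. First I would record that $\cN_\bd$ carries a cell decomposition whose cells are the $Z_S$, $S\in\cP(\bd)$ (Theorem \ref{th:cell-dec} and the corollary following it). By Theorem \ref{th:cell-dec-BM}, the Chow group $A_*(\cN_\bd)$ is then free abelian with basis $\sets{[\bar Z_S]}{S\in\cP(\bd)}$, and this basis respects the grading: since $\dim Z_S=d(S)$ by \eqref{dim Z_S} and Lemma \ref{lm:dim Z_S}, for each integer $d$ the classes $[\bar Z_S]$ with $d(S)=d$ form a $\bZ$-basis of $A_d(\cN_\bd)$.

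Next I would bring in the degeneracy classes. As computed in the paragraph preceding the theorem, the push-forward of $\bD_S$ to $\cN_\bd$ is
$$\bD'_S=\rbr{\prod_{i\in I}\prod_{v\in C(S)_i}c_{\bd_i-k_v}(\cU_i)}\cap[\cN_\bd]\in A_{d(S)}(\cN_\bd).$$
By Lemma \ref{D_S inclusion} the support satisfies $D_S\sbs\bigsqcup_{S'\ge S}Z_{S'}$, so $\bD'_S$ is a $\bZ$-linear combination of the classes $[\bar Z_{S'}]$ with $S'\ge S$ and $d(S')=d(S)$; moreover Lemma \ref{intersection lemma}, applied to the degeneracy loci $D(\si_v)$, $v\in C(S)$, and to the open set $U_S$ (for which $D_S\cap U_S=Z_S$ by Lemma \ref{D_S inclusion}), shows that the coefficient of $[\bar Z_S]$ equals $1$. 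This is precisely the relation \eqref{D-Z} with $n_{S,S}=1$.

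The rest is linear algebra plus duality. Fix $d$ and list the finite set $\sets{S\in\cP(\bd)}{d(S)=d}$ as $S_1<\dots<S_m$ using the total order on $\cP(\bd)$ from \S\ref{sec:cell dec}. Then \eqref{D-Z} reads $\bD'_{S_j}=[\bar Z_{S_j}]+\sum_{k>j}n_{S_k,S_j}[\bar Z_{S_k}]$, so the matrix expressing $(\bD'_{S_j})_j$ in terms of $([\bar Z_{S_k}])_k$ is unitriangular, hence invertible over $\bZ$; therefore $\sets{\bD'_S}{d(S)=d}$ is again a $\bZ$-basis of $A_d(\cN_\bd)$. Summing over $d$ shows $\sets{\bD'_S}{S\in\cP(\bd)}$ is a basis of $A_*(\cN_\bd)$. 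Finally, since $\cN_\bd$ is smooth of dimension $N$, the cap product $A^{N-d}(\cN_\bd)\to A_d(\cN_\bd)$, $c\mto c\cap[\cN_\bd]$, is an isomorphism (cf.\ \ful), and it carries $\prod_{i\in I}\prod_{v\in C(S)_i}c_{\bd_i-k_v}(\cU_i)$ to $\bD'_S$; hence these classes form a basis of $A^*(\cN_\bd)$.

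I do not expect a serious obstacle here: the substantive inputs --- freeness of $A_*(\cN_\bd)$, the inclusion $D_S\sbs\bigsqcup_{S'\ge S}Z_{S'}$, and the normalization $n_{S,S}=1$ --- have all been established in the preceding lemmas, leaving only the inversion of a unitriangular change-of-basis matrix together with Poincar\'e duality. The one point requiring attention is that the triangularity in \eqref{D-Z} is internal to each graded piece $A_d(\cN_\bd)$; this is what makes the change-of-basis matrix genuinely finite and unitriangular, and it holds because \eqref{D-Z} only involves trees $S'$ with $d(S')=d(S)$ while the order on $\cP(\bd)$ restricts to a total order on each such set.
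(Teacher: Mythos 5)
Your proposal is correct and matches the paper's argument: the paper establishes exactly the triangular relation \eqref{D-Z} with $n_{S,S}=1$ in the paragraph preceding the theorem, and the theorem then follows by the same unitriangular change-of-basis within each graded piece $A_{d}(\cN_\bd)$ together with Poincar\'e duality for the smooth variety $\cN_\bd$. Your write-up simply makes explicit the "this implies" step left implicit in the paper; in particular, your observation that \eqref{D-Z} is internal to the fixed grading $d(S')=d(S)$ is precisely what makes the matrix finite and invertible.
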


Note that the numbers $k_v$ (hence the above description of a basis of $A^*(\cN_\bd)$) depend on the admissible total order on $\cP$.

\begin{theorem}\label{th:basis2}
The classes
$$\prod_{i\in I}\prod_{k\ge1} c_k(\cU_i)^{\tp\la i_k-\tp\la i_{k+1}}\in A^{\n\la}(\cN_\bd),\qquad
\la\in\cS(\bd),$$
form a basis of $A^*(\cN_\bd)$, where $\n\la=\sum_{i\in I}\n{\tp\la i}$.
\end{theorem}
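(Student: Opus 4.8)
The plan is to deduce Theorem~\ref{th:basis2} from Theorem~\ref{basis1} by a pure reindexing along the bijection $\cP(\bd)\to\cS(\bd)$ of Theorem~\ref{bijection}. First I would fix one admissible total order on $\cP$ and let $S\mapsto\la=\la(S)$ be the resulting bijection $\cP(\bd)\isoto\cS(\bd)$.

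The main step is to rewrite, for a tree $S\in\cP(\bd)$, the basis element of Theorem~\ref{basis1} attached to $S$ in terms of $\la=\la(S)$. Recall from \eqref{la from S}, and from the subsection on cells parametrized by multi-partitions (with the normalization $\la^{(i)}_0=\#C(S)_i=\f_i-\hi(\bd,i)$), that for every $i\in I$ and every $k\ge0$ the number of $v\in C(S)_i$ with $k_v=\bd_i-k$ equals $\la^{(i)}_k-\la^{(i)}_{k+1}$. Grouping the factors of $\prod_{v\in C(S)_i}c_{\bd_i-k_v}(\cU_i)$ according to the value of $k_v$, writing $k=\bd_i-k_v$, and using $c_0(\cU_i)=1$ together with the vanishing of $c_k(\cU_i)$ for $k>\bd_i=\rk\cU_i$, one gets
$$\prod_{v\in C(S)_i}c_{\bd_i-k_v}(\cU_i)=\prod_{k\ge1}c_k(\cU_i)^{\la^{(i)}_k-\la^{(i)}_{k+1}},$$
and hence, taking the product over $i\in I$,
$$\prod_{i\in I}\prod_{v\in C(S)_i}c_{\bd_i-k_v}(\cU_i)=\prod_{i\in I}\prod_{k\ge1}c_k(\cU_i)^{\la^{(i)}_k-\la^{(i)}_{k+1}}.$$
I would also record the degree bookkeeping: the left-hand class lies in $A^{N-d(S)}(\cN_\bd)$ with $N=\dim\cN_\bd=\f\cdot\bd-\hi(\bd,\bd)$, while \eqref{d(la)} gives $d(S)=\dim Z_S=\f\cdot\bd-\hi(\bd,\bd)-\n\la$, so that $N-d(S)=\n\la$, matching the degree claimed in the statement.

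With this identity in hand the theorem is immediate. Theorem~\ref{basis1} says that the classes $\prod_{i\in I}\prod_{v\in C(S)_i}c_{\bd_i-k_v}(\cU_i)$, as $S$ ranges over $\cP(\bd)$, form a basis of $A^*(\cN_\bd)$. Transporting this basis along the bijection $S\mapsto\la(S)$ and applying the identity above rewrites it precisely as the family $\prod_{i\in I}\prod_{k\ge1}c_k(\cU_i)^{\la^{(i)}_k-\la^{(i)}_{k+1}}$, $\la\in\cS(\bd)$, which is therefore a basis of $A^*(\cN_\bd)$. Since this family plainly refers to no choice of order, this also shows \emph{a posteriori} that it is a basis for every admissible order on $\cP$, even though the individual basis of Theorem~\ref{basis1} does depend on the order.

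I do not expect a genuine obstacle here; the statement is essentially a change of indexing set. The only points requiring care are the boundary conventions --- checking that the $k=0$ contributions collapse to $1$ and that the factors with $k>\bd_i$ are trivial, so that the finite product over $k$ matches the finite product over $v\in C(S)_i$ term by term --- and confirming the normalization $\la^{(i)}_0=\#C(S)_i$ used in the counting identity, as it is exactly this choice that aligns the two indexings.
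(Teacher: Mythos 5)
Your proof is correct and is essentially identical to the paper's: fix an admissible order, use the bijection of Theorem~\ref{bijection} together with the counting identity $\la^{(i)}_k-\la^{(i)}_{k+1}=\#\{v\in C(S)_i: \bd_i-k_v=k\}$ to rewrite the class of Theorem~\ref{basis1} as the product of Chern classes, and conclude by transport of basis. The degree check and the boundary observations ($k=0$ and $k>\bd_i$) you add are correct elaborations that the paper leaves implicit.
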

\begin{proof}
Recall that for any admissible order on \cP we have a bijection between the set of trees $\cP(\bd)$ and the set of multipartitions $\cS(\bd)$ (see Theorem \ref{bijection}).
If $\la\in\cS(\bd)$ corresponds to the tree $S\in\cP(\bd)$ under this bijection, then
$$\tp\la i_k-\tp\la i_{k+1}=\#\sets{v\in C(S)_i}{\bd_i-k_v=k}.$$
Therefore
$$\prod_i\prod_{v\in C(S)_i}c_{\bd_i-k_v}(\cU_{i})
=\prod_{i\in I}\prod_{k\ge1} c_k(\cU_i)^{\tp\la i_k-\tp\la i_{k+1}}$$
and we apply Theorem \ref{basis1}. 
\end{proof}

Note that by Theorem \ref{th:cell-dec-BM}, we have $\HB_i(\cN_\bd,\bZ)=0$ for odd $i$ and $\HB_{2i}(\cN_\bd,\bZ)\iso A_i(\cN_\bd)$.
Similarly, $H^i(\cN_\bd,\bQ)=0$ for odd $i$ and $H^{2i}(\cN_\bd,\bQ)\iso A^i(\cN_\bd)\ts\bQ$.
Therefore the above theorem gives a canonical basis of $H^*(\cN_\bd,\bQ)$.

\subsection{Dependence of classes on the admissible order}
\label{sec:dependence}
Given an admissible order on \cP, we have a bijection $\cS(\bd)\to\cP(\bd)$, $\la\mto S_\la$.
In the previous section we have seen that the classes $\bD'_\la=\bD'_{S_\la}$ forming a basis of $A_*(\cN_\bd)$ are independent of the choice of an admissible order on \cP.
One may ask if the classes $[\bar Z_{S_\la}]$ (forming another basis of $A_*(\cN_\bd)$) are independent of the choice of an admissible order.
We will see in this section that this is not always the case.

Consider the quiver $Q^\bw$
$$\begin{tikzcd}
\infty\ar[r,"f"]& 0
\ar[loop, out=10,in=45,looseness=5,"a"']
\ar[loop, out=-10,in=-45,looseness=5,"b"]
\end{tikzcd}
$$
with arrows ordered as $f<a<b$.
Let $\cP$ be the set of paths in $Q^\f$ starting at $\infty$.
Assume that \cP is equipped with the shortlex order.
Then the trees in $\cP(3)$ written in the increasing order
(see \S\ref{sec:cell dec})
 are (we omit the root $*$)
\begin{table}[ht]
\begin{tabular}{|c|c|c|c|}
\hline
Tree $S$ & $d(S)$ & Partition\\
\hline
$S_1=(f,af,bf)$&12&()\\
$S_2=(f,af,a^2f)$&11&(1)\\
$S_3=(f,af,baf)$&10&(2)\\
$S_4=(f,bf,abf)$&10&(1,1)\\
$S_5=(f,bf,b^2f)$&9&(2,1)\\
\hline
\end{tabular}
\end{table}

For a tree $S\in\cP(3)$ with the corresponding partition $\la$, let
$$
\bD'_S=\bD'_\la=\rbr{\prod_{k\ge1} c_k(\cU_0)^{\la_k-\la_{k+1}}}\cap[\cN_3]\in
A_{d(S)}(\cN_3),$$
where $d(S)=\dim\cN_3-\n\la=12-\n\la$.
The group $A_{10}(\cN_3)$ has a basis consisting of 
$[\bar Z_{S_3}]$ and $[\bar Z_{S_4}]$ and a basis consisting of $\bD'_{S_3}=c_1(\cU_0)^2\cap[\cN_3]$ and 
$\bD'_{S_4}=c_2(\cU_0)\cap[\cN_3]$.
By \eqref{D-Z} we have
$$\bD'_{S_3}=[\bar Z_{S_3}]+n[\bar Z_{S_4}],\qquad \bD'_{S_4}=[\bar Z_{S_4}],$$
for some $n\in\bZ$.
In order to find the multiplicity $n$ of $[\bar Z_{S_4}]$ in $\bD'_{S_3}$, we will study the intersection $U_{S_4}\cap D_{S_3}$.
We have
$$C(S_3)=(bf,a^2f,abaf,b^2af),\qquad
C(S_4)=(af,b^2f,a^2bf,babf).$$
Let us denote the elements of $S_4$ by $u_1<u_2<u_3$ and the elements of $C(S_4)$ by $v_1<v_2<v_3<v_4$.
Then the coordinates $(c_{ij})$ of $M\in U_{S_4}$ are given by $m_{v_j}=\sum_i c_{ij}m_{u_i}$ 
(\cf Lemma~\ref{lm:dim U_S}).
For $M\in D_{S_3}$ we require that $(m_f,m_{af},m_{bf})$ are linearly dependent and $(m_f,m_{af},m_{a^2f})$ are linearly dependent.
The first condition means that $c_{31}=0$ and $m_{af}=c_{11}m_f+c_{21}m_{bf}$.
For the second condition, we note that
$$m_{a^2f}=am_{af}=c_{11}m_{af}+c_{21}m_{abf}
=c_{11}(c_{11}m_f+c_{21}m_{bf})+c_{21}m_{abf}.
$$
Therefore we require that
$$\det\pmat{1&c_{11}&c_{11}^2\\
0&c_{21}&c_{11}c_{21}\\
0&0&c_{21}}=c_{21}^2=0.$$
For $M\in Z_{S_4}$, we require that $m_{af}$ is contained in $\ang{m_f}$, hence $c_{21}=c_{31}=0$.
We conclude that the multiplicity of $[\bar Z_{S_4}]$ in $\bD'_{S_3}$ is $n=2$, hence 
$$[\bar Z_{S_3}]=\bD'_{S_3}-2\bD'_{S_4}
,\qquad
[\bar Z_{S_4}]=\bD'_{S_4}.
$$

On the other hand, let us equip \cP with the lex order.
Then the trees in $\cP(3)$ written in the increasing order
are (we omit the root $*$)
\begin{table}[ht!]
\begin{tabular}{|c|c|c|c|}
\hline
Tree $S$& $d(S)$ & Partition\\
\hline
$S_1=(f,af,a^2f)$&12&()\\
$S_2=(f,af,baf)$&11&(1)\\
$S_3=(f,af,bf)$&10&(2)\\
$S_4=(f,bf,abf)$&10&(1,1)\\
$S_5=(f,bf,b^2f)$&9&(2,1)\\
\hline
\end{tabular}
\end{table}

Note that the order of partitions is the same as before by Lemma~\ref{lm:part-order}.
Using the same method as before one can show that (see \cite[\S2.2]{franzen_cohomology})
$$[\bar Z_{S_3}]=\bD'_{S_3}-4\bD'_{S_4},\qquad 
[\bar Z_{S_4}]=\bD'_{S_4}.$$
This implies that while the classes $\bD'_{S_i}$ are independent of the choice of an admissible order, the classes $[\bar Z_{S_i}]$ may depend on it.

\bibliography{biblio}

\providecommand{\bysame}{\leavevmode\hbox to3em{\hrulefill}\thinspace}
\providecommand{\href}[2]{#2}
\begin{thebibliography}{10}

\bibitem{engel_smooth}
Johannes Engel and Markus Reineke, \emph{{S}mooth models of quiver moduli},
  Math. Z. \textbf{262} (2009), no.~4, 817--848,
  \href{http://arxiv.org/abs/0706.4306}{{\ttfamily arXiv:0706.4306}}.

\bibitem{franzen_chow}
Hans Franzen, \emph{{O}n {C}how {R}ings of {F}ine {Q}uiver {M}oduli and
  {M}odules over the {C}ohomological {H}all {A}lgebra}, Ph.D. thesis, Wuppertal
  University, 2014.

\bibitem{franzen_cohomology}
\bysame, \emph{On cohomology rings of non-commutative {H}ilbert schemes and
  {C}o{H}a-modules}, Math. Res. Lett. \textbf{23} (2016), no.~3, 805--840,
  \href{http://arxiv.org/abs/1312.1499}{{\ttfamily arXiv:1312.1499}}.

\bibitem{franzen_semia}
\bysame, \emph{On the semi-stable {C}o{H}a and its modules arising from smooth
  models}, J. Algebra \textbf{503} (2018), 121--145,
  \href{http://arxiv.org/abs/1502.04327}{{\ttfamily arXiv:1502.04327}}.

\bibitem{fulton_young}
William Fulton, \emph{Young tableaux. with applications to representation
  theory and geometry}, London Mathematical Society Student Texts, vol.~35,
  Cambridge University Press, Cambridge, 1997.

\bibitem{fulton_intersection}
\bysame, \emph{{I}ntersection theory}, second ed., Ergebnisse der Mathematik
  und ihrer Grenzgebiete, Folge 3, vol.~2, Springer-Verlag, Berlin, 1998.

\bibitem{kontsevich_cohomological}
Maxim Kontsevich and Yan Soibelman, \emph{{C}ohomological {H}all algebra,
  exponential {H}odge structures and motivic {D}onaldson-{T}homas invariants},
  Commun. Num. Theor. Phys. \textbf{5} (2011), 231--352,
  \href{http://arxiv.org/abs/1006.2706}{{\ttfamily arXiv:1006.2706}}.

\bibitem{nakajima_heisenberg}
Hiraku Nakajima, \emph{Heisenberg algebra and {H}ilbert schemes of points on
  projective surfaces}, Ann. of Math. (2) \textbf{145} (1997), no.~2, 379--388,
  \href{http://arxiv.org/abs/alg-geom/9507012}{{\ttfamily
  arXiv:alg-geom/9507012}}.

\bibitem{nakajima_quiverb}
\bysame, \emph{{Q}uiver varieties and finite-dimensional representations of
  quantum affine algebras}, J. Amer. Math. Soc. \textbf{14} (2001), no.~1,
  145--238 (electronic), \href{http://arxiv.org/abs/math.QA/9912158}{{\ttfamily
  arXiv:math.QA/9912158}}.

\bibitem{reineke_cohomology}
Markus Reineke, \emph{{C}ohomology of noncommutative {H}ilbert schemes},
  Algebr. Represent. Theory \textbf{8} (2005), no.~4, 541--561,
  \href{http://arxiv.org/abs/math.AG/0306185}{{\ttfamily
  arXiv:math.AG/0306185}}.

\bibitem{ringel_hall}
Claus~Michael Ringel, \emph{{H}all algebras and quantum groups}, Invent. Math.
  \textbf{101} (1990), no.~3, 583--591.

\bibitem{soibelman_remarks}
Yan Soibelman, \emph{Remarks on cohomological {H}all algebras and their
  representations}, Arbeitstagung {B}onn 2013, Progr. Math., vol. 319,
  Birkh\"{a}user/Springer, 2016,
  \href{http://arxiv.org/abs/1404.1606}{{\ttfamily arXiv:1404.1606}},
  pp.~355--385.

\bibitem{Young:20}
Matthew~B. Young, \emph{Representations of cohomological {H}all algebras and
  {D}onaldson-{T}homas theory with classical structure groups}, Comm. Math.
  Phys. \textbf{380} (2020), no.~1, 273--322,
  \href{http://arxiv.org/abs/1603.05401}{{\ttfamily arXiv:1603.05401}}.

\end{thebibliography}
\bibliographystyle{hamsplain}
\end{document}